\pdfoutput=1
\documentclass[11pt,twoside]{article}

\usepackage{amsmath,bm}
\usepackage{amsthm} 
\usepackage[colorlinks=true,
linkcolor=blue,
urlcolor=blue,
citecolor=blue]{hyperref}
\usepackage{tikz}
\usetikzlibrary{arrows,shapes.callouts, positioning, arrows.meta}
\tikzset{
  box/.style={
    rectangle, rounded corners,
    draw=black, thick,
    align=center,
    minimum height=1cm,
    minimum width=4cm
  },
  arrow/.style={
    ->, thick
  }
}
\usepackage{bookmark}
\usepackage{amssymb}
\usepackage{amsfonts}

\usepackage{titlesec}
\usepackage{verbatim}
\usepackage{fullpage}
\usepackage{color,xcolor}
\usepackage{mathrsfs}
\usepackage[normalem]{ulem}
\usepackage{longtable} 
\usepackage{array}

\usepackage[capitalize]{cleveref}

\usepackage[round,sort]{natbib}
\usepackage{dsfont}
\usepackage{graphicx}
\usepackage{caption}
\usepackage{subcaption}
\usepackage{float}
\usepackage{mathtools}
\usepackage{etoolbox}
\usepackage{thmtools}
\usepackage{enumitem}
\usepackage[textsize=tiny]{todonotes}
\usepackage{booktabs}
\usepackage{caption}
\usepackage{multirow}
\usepackage{esvect}
\usepackage[utf8]{inputenc}

\usetikzlibrary{arrows.meta, positioning}
\usepackage{xcolor}

\newtheorem{theorem}{Theorem}
\newtheorem{lemma}{Lemma}

\newtheorem{assumption}{Assumption}
\declaretheoremstyle[headfont=\bf,bodyfont=\normalfont]{ex}
\declaretheorem[style=ex]{example}
\declaretheoremstyle[bodyfont=\normalfont]{rm}
\declaretheorem[style=rm]{remark}

\DeclareMathOperator*{\argmin}{arg\,min}

\def\BB{\mathbb{B}}

\def\AA{\mathbb{A}}
\def\VV{\mathbb{V}}

\def\KK{\mathbb{K}}
\def\II{\mathbb{I}}
\def\YY{\mathbb{Y}}
\def\XX{\mathbb{X}}
\def\NN{\mathbb{N}}

\def\RR{\mathbb{R}}

\def\EE{\mathbb{E}}
\def\SS{\mathbb{S}}
\def\MM{\mathbb{M}}
\def\ZZ{\mathbb{Z}}

\def\cO{\mathcal{O}}

\def\cH{\mathcal H}
\def\cD{\mathcal D}
\def\cE{\mathcal E}
\def\cR{\mathcal R}
\def\cZ{\mathcal Z}

\def\cF{\mathcal F}

\def\cL{\mathcal L}
\def\cB{\mathcal B}

\def\cX{\mathcal{X}}

\def\rI{\mathrm{I}}
\def\rII{\mathrm{II}}

\def\wh{\widehat}
\def\wt{\widetilde}

\def\T{\top}

\def\i{\infty}
\def\tr{{\rm Tr}}
\def\op{{\rm op}}

\def\d{~{\rm d}}

\def\eps{\varepsilon}

\def\two{\text{two}}

\def\l{\left}
\def\r{\right}
\def\b{\big}
\def\B{\Big}

\usepackage{threeparttable} 
\long\def\comment#1{}

\newcommand{\ydsout}[1]{}

\usepackage{stackrel}

\newcommand{\rank}{\ensuremath{{\rm rank}}}

\newcommand{\RKHS}{\ensuremath{\mathscr{F}}}

\newcommand{\StateSpgoodh}[1]{\ensuremath{}}

\newcommand{\StateSpbadh}[1]{\ensuremath{}}

\newcommand{\Data}{\ensuremath{\mathcal{D}}}

\usepackage{upgreek} \newcommand{\distr}{\ensuremath{\upmu}}

\newcommand{\distrdata}{\ensuremath{\bar{\distr}}}

\newcommand{\sdistrdata}{\ensuremath{\sdistr_{\Data}}}

\newcommand{\sdistr}{\ensuremath{\xi}}

\DeclarePairedDelimiterX{\anglep}[1]{(}{)}{#1}
\makeatletter
\newcommand{\@spanstar}[1]{{\rm span}\anglep*{#1}}
\newcommand{\@spannostar}[2][]{{\rm span}\anglep[#1]{#2}}
\newcommand{\Span}{\@ifstar\@spanstar\@spannostar}
\makeatother

\DeclarePairedDelimiterX{\dfun}[2]{(}{)}{#1 \;\delimsize\|\; #2}
\makeatletter
\newcommand{\@trunstar}[2]{\chi^2\dfun*{#1}{#2}}
\newcommand{\@trunnostar}[3][]{\chi^2\dfun[#1]{#2}{#3}}
\newcommand{\chisq}{\@ifstar\@trunstar\@trunnostar}
\makeatother

\DeclarePairedDelimiterX{\inprod}[2]{\langle}{\rangle}{#1, \, #2}

\DeclarePairedDelimiterX{\kulldiv}[2]{(}{)}{#1\;\delimsize\|\;#2}
\makeatletter
\newcommand{\@kullstar}[2]{D_{\text{KL}}\kulldiv*{#1}{#2}}
\newcommand{\@kullnostar}[3][]{D_{\text{KL}}\kulldiv[#1]{#2}{#3}}
\newcommand{\kull}{\@ifstar\@kullstar\@kullnostar}
\makeatother

\makeatletter
\newcommand{\@hilinstar}[2]{\inprod*{#1}{#2}_{\RKHS}}
\newcommand{\@hilinnostar}[3][]{\inprod[#1]{#2}{#3}_{\RKHS}}
\newcommand{\hilin}{\@ifstar\@hilinstar\@hilinnostar}
\makeatother

\makeatletter
\newcommand{\@mudatainstar}[2]{\inprod*{#1}{#2}_{\distrdata}}
\newcommand{\@mudatainnostar}[3][]{\inprod[#1]{#2}{#3}_{\distrdata}}
\newcommand{\mudatain}{\@ifstar\@mudatainstar\@mudatainnostar}
\makeatother

\makeatletter
\newcommand{\@mudatahinstar}[3]{\inprod*{#2}{#3}_{\distrdata}}
\newcommand{\@mudatahinnostar}[4][]{\inprod[#1]{#3}{#4}_{\distrdata}}
\newcommand{\mudatahin}{\@ifstar\@mudatahinstar\@mudatahinnostar}
\makeatother

\DeclarePairedDelimiterX{\defabs}[1]{|}{|}{#1}
\makeatletter
\newcommand{\@absstar}[1]{\defabs*{#1}}
\newcommand{\@absnostar}[2][]{\defabs[#1]{#2}}
\newcommand{\abs}{\@ifstar\@absstar\@absnostar}
\makeatother

\DeclarePairedDelimiterX{\norm}[1]{\|}{\|}{#1}
\makeatletter
\newcommand{\@normstar}[1]{\norm*{#1}_{\RKHS}}
\newcommand{\@normnostar}[2][]{\norm[#1]{#2}_{\RKHS}}
\newcommand{\hilnorm}{\@ifstar\@normstar\@normnostar}
\makeatother

\DeclareFontFamily{U}{matha}{\hyphenchar\font45}
\DeclareFontShape{U}{matha}{m}{n}{
	<-6> matha5 <6-7> matha6 <7-8> matha7
	<8-9> matha8 <9-10> matha9
	<10-12> matha10 <12-> matha12
}{}
\DeclareSymbolFont{matha}{U}{matha}{m}{n}

\DeclareFontFamily{U}{mathx}{\hyphenchar\font45}
\DeclareFontShape{U}{mathx}{m}{n}{
	<-6> mathx5 <6-7> mathx6 <7-8> mathx7
	<8-9> mathx8 <9-10> mathx9
	<10-12> mathx10 <12-> mathx12
}{}
\DeclareSymbolFont{mathx}{U}{mathx}{m}{n}

\DeclareMathDelimiter{\vvvert} {0}{matha}{"7E}{mathx}{"17}%

\DeclarePairedDelimiterX{\opnorm}[1]{\vvvert}{\vvvert}{#1}

\makeatletter
\newcommand{\@hilopnormstar}[1]{\opnorm*{#1}_{\RKHS}}
\newcommand{\@hilopnormnostar}[2][]{\opnorm[#1]{#2}_{\RKHS}}
\newcommand{\hilopnorm}{\@ifstar\@hilopnormstar\@hilopnormnostar}
\makeatother

\makeatletter
\newcommand{\@muopnormstar}[1]{\opnorm*{#1}_{\distr}}
\newcommand{\@muopnormnostar}[2][]{\opnorm[#1]{#2}_{\distr}}
\newcommand{\muopnorm}{\@ifstar\@muopnormstar\@muopnormnostar}
\makeatother

\makeatletter
\newcommand{\@mudataopnormstar}[1]{\opnorm*{#1}_{\distrdata}}
\newcommand{\@mudataopnormnostar}[2][]{\opnorm[#1]{#2}_{\distrdata}}
\newcommand{\mudataopnorm}{\@ifstar\@mudataopnormstar\@mudataopnormnostar}
\makeatother

\makeatletter
\newcommand{\@supnormstar}[1]{\norm*{#1}_{\infty}}
\newcommand{\@supnormnostar}[2][]{\norm[#1]{#2}_{\infty}}
\newcommand{\supnorm}{\@ifstar\@supnormstar\@supnormnostar}
\makeatother

\makeatletter
\newcommand{\@munormstar}[1]{\norm*{#1}_{\distr}}
\newcommand{\@munormnostar}[2][]{\norm[#1]{#2}_{\distr}}
\newcommand{\munorm}{\@ifstar\@munormstar\@munormnostar}
\makeatother

\makeatletter
\newcommand{\@mudatanormstar}[1]{\norm*{#1}_{\distrdata}}
\newcommand{\@mudatanormnostar}[2][]{\norm[#1]{#2}_{\distrdata}}
\newcommand{\mudatanorm}{\@ifstar\@mudatanormstar\@mudatanormnostar}
\makeatother

\makeatletter
\newcommand{\@xidatanormstar}[1]{\norm*{#1}_{\sdistrdata}}
\newcommand{\@xidatanormnostar}[2][]{\norm[#1]{#2}_{\sdistrdata}}
\newcommand{\xidatanorm}{\@ifstar\@xidatanormstar\@xidatanormnostar}
\makeatother

\makeatletter
\newcommand{\@distrnormstar}[2]{\norm*{#1}_{#2}}
\newcommand{\@distrnormnostar}[3][]{\norm[#1]{#2}_{#3}}
\newcommand{\distrnorm}{\@ifstar\@distrnormstar\@distrnormnostar}
\makeatother

\makeatletter
\newcommand{\@psinormstar}[2]{\norm*{#2}_{\psi_{#1}}}
\newcommand{\@psinormnostar}[3][]{\norm[#1]{#3}_{\psi_{#2}}}
\newcommand{\psinorm}{\@ifstar\@psinormstar\@psinormnostar}
\makeatother

\newcommand{\featureh}[1]{\ensuremath{\phi}}

\newcommand{\Lip}{\ensuremath{L}}

\newcommand{\Lipf}[1]{\ensuremath{\Lip_{f}}}

\newenvironment{carlist}
{\begin{list}{$\bullet$}
		{\setlength{\topsep}{0.1in} \setlength{\partopsep}{0in}
			\setlength{\parsep}{0.1in} \setlength{\itemsep}{\parskip}
			\setlength{\leftmargin}{0.15in} \setlength{\rightmargin}{0.08in}
			\setlength{\listparindent}{0in} \setlength{\labelwidth}{0.08in}
			\setlength{\labelsep}{0.1in} \setlength{\itemindent}{0in}}}
	{\end{list}}

\newcommand{\bcar}{\begin{carlist}}
	\newcommand{\ecar}{\end{carlist}}

 \title{Adaptive Kernel Ridge Regression with Linear Structure: Sharp Oracle Inequalities and Minimax Optimality}
 
\author{Xin Bing\thanks{Department of 
Statistical Sciences, University of Toronto.  \texttt{xin.bing@utoronto.ca}}
\hspace{1cm}Chao Wang\thanks{Department of Mathematical and Computational Sciences, University of Toronto.  \texttt{zchao.wang@utoronto.ca}}}

\begin{document}

\maketitle

\begin{abstract}
      Kernel ridge regression (KRR) is a widely used nonparametric method due to its strong theoretical guarantees and computational convenience. However, standard KRR does not distinguish between linear and nonlinear components in the signal, instead applying a single functional regularization to the entire function. This may lead to unnecessary shrinkage of linear structure and consequently suboptimal prediction performance. In this paper, we propose a modified regression procedure that augments KRR with an explicit linear component. The proposed method has the same computational complexity as standard KRR and introduces no additional tuning parameters. Theoretically, we establish a sharp oracle inequality for the proposed estimator and show that it adaptively captures both linear and nonlinear structure, achieving minimax optimal prediction risk under general kernels. Compared with standard KRR, the proposed method improves both the bias and approximation error at the expense of only an additional parametric variance term, which is negligible in low- and moderate-dimensional settings. In high-dimensional regimes, incorporating ridge regularization for the linear component yields a procedure that performs uniformly no worse than KRR. Extensive simulation studies support the theoretical findings.  
\end{abstract}

\noindent{\em Keywords:} Adaptive estimation, kernel ridge regression, minimax optimality, oracle inequality, partially linear models. 



\section{Introduction}\label{sec:intro}

Kernel ridge regression (KRR) is a canonical tool for nonparametric regression, offering a flexible framework for estimating complex functions through regularization in a Reproducing Kernel Hilbert Space (RKHS). Due to its strong theoretical guarantees and computational convenience, it has been widely used across statistics and machine learning, including many recent applications such as causal inference \citep{singh2024kernel}, image recognition \citep{an2007face}, reinforcement learning \citep{duan2024optimal}, and feature learning \citep{radhakrishnan2024mechanism}.

Let $\cD := \{(Y_i, X_i)\}_{i=1}^n$ denote a dataset of $n$ independent observations, where the response $Y_i \in \RR$ and the feature vector $X_i \in \cX \subseteq \RR^d$  satisfy
\begin{equation}\label{model}
   Y_i=f^*(X_i)+\epsilon_i,
\end{equation}
with $\EE[\epsilon_i \mid X_i] = 0$ and $\EE[\epsilon_i^2 \mid X_i] = \sigma^2$. Let $K : \cX \times \cX \to \RR$ be a kernel function, and denote by $\cH_K$ its associated RKHS, endowed with norm $\|\cdot\|_K$.  KRR seeks a function over $\cH_K$ by solving  
\begin{align}\label{def_krr}
\wt  g   = \argmin _{g \in \cH_K}  ~ \B\{~   \frac{1}{n} \sum_{i=1}^n \left( Y_i - g (X_i) \right)^2 + \lambda  \|g\|_K^2 \B\}, 
\end{align}
where $\lambda>0$ is some regularization parameter that penalizes the complexity of each candidate $g\in \cH_K$. The representer theorem \citep{kimeldorf1971some} ensures that \eqref{def_krr} admits a closed-form solution for computation.  Theoretically, when the regression function $f^*\in \cH_K$, KRR has been well studied, and its prediction error is known to achieve minimax optimal rates under suitable smoothness conditions on $\cH_K$; see, for instance,  \cite{bartlett2005local,caponnetto2007optimal,smale2007learning}. Allowing $f^* \notin \cH_K$ is considered in \cite{steinwart2009optimal,steinwart2008support} and recently by \cite{fischer2020sobolev,bing2025kernel}. In this paper we also allow $f^*\notin \cH_K$.

In many applications, however, the regression function comprises both linear and nonlinear components. Standard KRR does not distinguish between these parts, instead penalizing the entire function through the single RKHS norm in \eqref{def_krr}. As a result, even simple linear signals are subject to regularization, which may introduce unnecessary shrinkage and degrade prediction accuracy. In particular, KRR does not explicitly exploit the parametric structure of the linear component and may therefore estimate it less efficiently than methods tailored to linear models.  An exception arises in classical smoothing spline models, where the linear component lies in the null space of the associated spline penalty, such as the cubic smoothing spline in one dimension or thin-plate spline models in higher dimensions (see, e.g., \cite{Wahba1990,GreenSilverman1994,Wood2017}). As a result, smoothing splines can always capture linear signals, while simultaneously allowing for estimation of more complex smooth components. However, this salient feature is specific to particular spline constructions and does not extend to general kernels.

We illustrate this effect in Figure \ref{fig_intro_new}, where we consider the periodic spline kernels which only include constant functions in their null space as well as the Gaussian kernels which have no nontrivial null space. The first setting is univariate with  
 $f^*(X) =2 X+\alpha \sin (2\pi X)$ where $X \sim \text{Unif}(0,1)$ and the noise follows $N(0,1.5^2)$. The parameter $\alpha$ controls the magnitude of the nonlinear signal, so that the relative strength of the linear component decreases as $\alpha$ increases.  The periodic spline  kernel is $K(x,x')= 1+\sum_{k\in \ZZ\setminus\{0\}}\exp(2\pi i k(x-x'))|k|^{-2q}$ with $q>1/2$    \citep{wahba1990spline}. We consider three choices of $q \in \{0.7, 1, 3\}$, with larger $q$ corresponding to a {more regular} $\cH_K$. The left panel of Figure \ref{fig_intro_new} shows that KRR is clearly outperformed by the Ordinary Least Squares (OLS) predictor in the linear case ($\alpha = 0$). As $\alpha$ increases, KRR begins to outperform OLS. However, it still fails to fully capture the linear signal, as evidenced by the fact that our proposed procedure improves performance across all settings. The second setting considers
$f^*(X) = X^\T\beta+ \alpha [\sin (\pi X_1)+ \cos (\pi X_2X_3)]
$ with $\beta=(2,-1.5,0.5)^\T$  and standard Gaussian noise. Coordinates of  $X$ are generated from $\text{Unif}(-2,2)$ independently. 
For Gaussian kernels $K(x,x') = \exp(-\|x-x'\|^2/\gamma)$, their universality mitigates the previous drawback when $\alpha = 0$ by choosing a larger $\gamma=100$, which corresponds to a simpler $\cH_K$. However, in the presence of nonlinear signal,  $\gamma = 100$ is outperformed by $\gamma =1$ for $\alpha \ge 0.8$, and by $\gamma = 0.5 $ for $\alpha \ge 1$. The smaller choices of $\gamma$ better capture the nonlinear signal but fail to fully recover the linear component. On the other hand, our proposed procedure improves upon KRR for all choices of $\gamma$; for simplicity, we report a single choice (see \cref{sec_sim} for details).

\begin{figure}[ht]
    \centering
\includegraphics[width=0.4\textwidth]{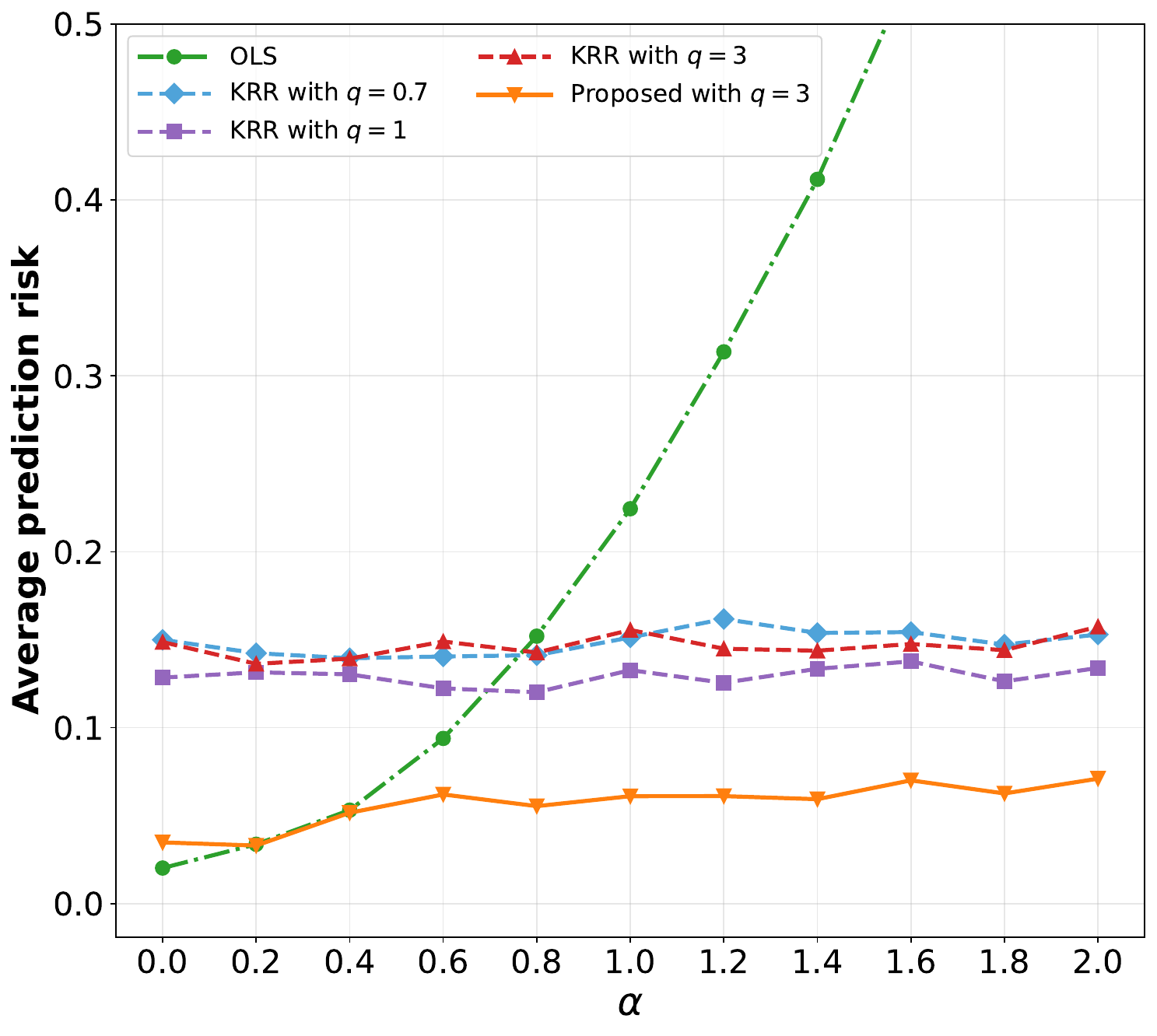}
    \hspace{5mm}
        {\includegraphics[width=0.4\textwidth]{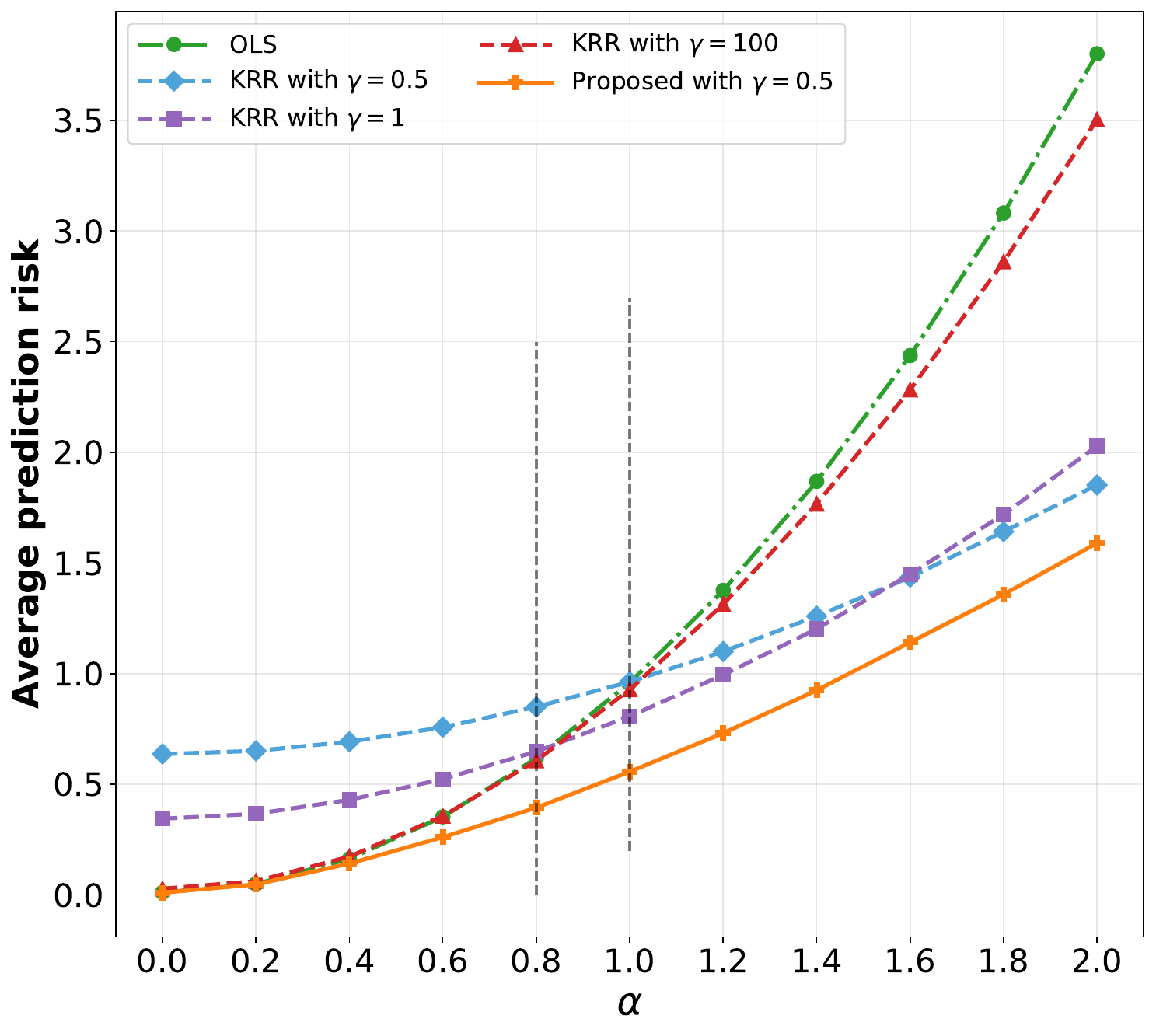}}
\caption{The average  prediction risk against  $\alpha$. Left:  the spline kernel with $n=200$. Right:  the Gaussian kernel with $n=400$.   The tuning parameter $\lambda$ is selected via cross-validation. 
The risk is computed on an independent dataset of size $500$, and each setting is repeated 100 times. 
}
\label{fig_intro_new}
\end{figure}

Motivated by this observation, we propose a modified kernel ridge regression framework that augments the RKHS component with an explicit linear term of the form
\begin{equation}\label{def_f_hat_intro}
\wh f(x) = x^\T \wh \alpha + \wh g(x)\qquad \forall ~ x\in \cX,
\end{equation}
where  $(\wh \alpha, \wh g)$ are jointly optimized by solving 
\begin{align}\label{def_proposed_intro}
     \min_{\alpha\in \RR^d,  g\in \cH_K}   \B\{~  \frac{1}{n} \sum_{i=1}^n\bigl(Y_i- X_i ^\T  \alpha - g(X_i)\bigl)^2 +\lambda\|g\|_K ^2 \B\}.
\end{align}
This formulation allows the linear signal to be estimated without shrinkage from the RKHS penalty, while retaining the flexibility of kernel methods for modeling the nonlinear component. The joint optimization also distinguishes this procedure from a naive two-step algorithm that first regresses $Y$ onto $X$ and then applies standard KRR to the residuals. We refer to \cref{rem_twostep} of \cref{sec_method_krr} for a detailed discussion of such comparison.
In \cref{sec_method_krr} we further show that the optimization \eqref{def_proposed_intro} admits a closed-form solution and can be solved with the same computational complexity as solving the standard KRR.

Our main theoretical contribution is to analyze the prediction risk of the proposed predictor in \cref{sec_theory_proposed} and to demonstrate its advantages over standard KRR.  For ease of presentation, we focus on the in-sample prediction risk under both fixed and random designs, as it already illustrates the main ideas. Similar results can be extended to out-of-sample prediction risk, albeit with more cumbersome analysis. For any measurable function $f: \cX \to \RR$ that may depend on the data $\cD$, we define its (in-sample)  prediction risk as 
\begin{equation}\label{def_risk}
    \cR(f-f^*) :=    {1\over n} \sum_{i=1}^n \EE\l[(f-f^*)(X_i) ^2\r]  =   {1\over n} \sum_{i=1}^n \EE\l[(f(X_i)-f^*(X_i))^2\r].
\end{equation}
To use standard KRR as a benchmark, we first present in \cref{thm_krr_fix} an oracle inequality for its prediction error, a result that, to the best of our knowledge, is not available in the existing literature.
For a fixed $\cH_K$ and regularization parameter $\lambda>0$, \cref{thm_krr_fix} shows that the prediction risk of $\wt g$ in \eqref{def_krr} decomposes into three terms:
\begin{equation}\label{rate_KRR_intro}
   \cR(\wt g-f^*) ~  \le ~   \textsf{Var}_{\cH_K}(\lambda) + \inf_{f\in \cH_K}\Bigl\{\lambda \|f\|_K^2 + \cR(f-f^*)
    \Bigr\} .
\end{equation}
On the right-hand side, the first term represents the variance of $\wt g$, which depends on the complexity of $\cH_K$ and the regularization parameter $\lambda$. The remaining two terms capture the bias induced by regularization and the approximation error arising from restricting the search to $\cH_K$. Notably, KRR effectively selects the function $f \in \cH_K$ that balances bias and approximation, which gives rise to a sharp oracle inequality. As the regularization parameter $\lambda$ increases, we observe the classical bias–variance tradeoff: the variance decreases while the bias increases.

Compared to the above rate, our theory in \cref{thm_fix} for fixed design and \cref{thm_random} for random design shows that the proposed $\wh f$ in \eqref{def_f_hat_intro} satisfies
\begin{equation}\label{rate_proposed_intro}
   \cR(\wh f-f^*)  ~ \le ~ \sigma^2 {d\over n} + \textsf{Var}_{\cH_K}(\lambda) + \inf_{\alpha\in\RR^d, f\in \cH_K}\Bigl\{\lambda \|f\|_K^2  + \cR(\langle \alpha,\rangle + f-f^*)
    \Bigr\} .
\end{equation}
The benefit of incorporating the linear term $x^\T \wh \alpha$ in \eqref{def_f_hat_intro} is evident from the resulting bias and approximation error: by optimally balancing the linear and nonlinear components, the proposed predictor achieves smaller bias and approximation error than KRR. Consequently, it allows for a larger choice of $\lambda$ to better reduce the variance, leading to a smaller overall prediction error. To illustrate this, consider the case where $f^*$ is linear: by setting $f\equiv 0$ and taking $\lambda \to \infty$ in \eqref{rate_proposed_intro}, the proposed estimator $\wh f$ recovers the risk $\sigma^2 d/n$, which holds for general $\cH_K$ and thus extends classical results for specific kernels used in smoothing spline models. We emphasize that this reduction to the parametric rate under a linear model does not generally hold for KRR, even with an optimal choice of $\lambda$ in \eqref{rate_KRR_intro}. On the other hand, when there is no linear signal, a setting favorable to KRR, taking $\alpha = 0$ in \eqref{rate_proposed_intro} shows that the proposed $\wh f$ is no worse than \eqref{rate_KRR_intro}, up to the term $\sigma^2 d/n$. As detailed in \cref{rem_var}, this term can be negligible in general settings with small or moderate $d$. Finally, the advantage of $\wh f$ over $\wt g$ persists more broadly whenever a nontrivial linear component is present in $f^*$; see \cref{rem_correct}.

When $d$ is relatively large, we show in \cref{sec_ridge} that a ridge-type linear estimator can be used in place of OLS to further reduce the term $\sigma^2 d/n$. Although this introduces an additional regularization parameter, optimizing over both the ridge regularization parameter and the RKHS regularization parameter yields a risk bound that adapts to the presence of both linear and nonlinear signals, and is uniformly better than that of standard KRR; see \cref{thm_fix_ridge,bd_pred_rate_random_ridge}.
\\

Developing procedures that adapt to the intrinsic complexity of the underlying signal has long been a central theme in statistical learning. A large body of work on adaptive estimation, model selection, and oracle inequalities has emphasized the importance of procedures that adapt to unknown structural complexity and automatically balance approximation and estimation errors \citep{barron1999risk,birge2001gaussian,tsybakov2009introduction}. In particular, \citet{yang2011parametricness} highlighted the distinction between ``practically parametric'' and ``practically nonparametric'' problems and argued that statistical procedures should adapt to latent low-complexity structure whenever possible. Our goal in this paper, however, is to develop a modification of KRR that adapts to latent linear structure while preserving the flexibility of general RKHS methods.

Our approach is also related to partially linear models (PLS) \citep{HardleLiangGao2000}, in which the regression function is decomposed into a linear component $X_1^\T \beta$ and a nonlinear component $g(X_2)$ where $(X_1,X_2)$ form a partition of the observed features $X$. 
In analyzing PLS, it is often indispensable to assume some identifiability condition to separate the linear and nonlinear signals. For instance, in \cite{muller2015partial} where a joint estimation for both linear and nonlinear components is adopted, a condition that the smallest eigenvalue of  $\EE [ (X_1 -\EE[X_1| X_2])   (X_1 -\EE[X_1| X_2])^\T ] $ is bounded away from zero is required.
A similar treatment was also considered in the subsequent works, see for instance \cite{yu2019minimax}, and recently \cite{lee2024high}.  Alternatively, \cite{zhu2017nonasymptotic,zhu2019high} consider a three-step procedure. First, the response and each coordinate of $X_1$ are separately regressed onto the nonlinear feature $X_2$. Second, the residuals of the response are linearly regressed onto the residuals of the coordinates of $X_1$ to obtain an estimator $\wh \beta$ of $\beta$. Finally, $Y - X_1^\T \wh \beta$ is regressed onto $X_2$ to estimate the nonlinear function $g$.
Their analysis relies on the same identifiability condition
as in \cite{muller2015partial}.  
By contrast, our formulation in \eqref{def_proposed_intro} decomposes the prediction into a linear component and an RKHS component based on the {\em same input} $X$. The identifiability condition commonly used in the PLS literature is clearly not applicable in our context.  Additive models \citep{Wood2017} offer another approach for capturing both linear and nonlinear structure; however, they rely on a coordinate-wise decomposition of the nonlinear component, which can be restrictive in practice.\\

\noindent {\it Notation.} 
For any integer $m$, we let $[m] = \{1,\ldots,m\}$.  
We use $\II_m$  to denote the $m\times m$ identity matrix.
The inner product and the endowed norm in the Euclidean space are denoted as  $\langle\cdot,\cdot\rangle$ and $\|\cdot\|$, respectively.
For any two sequences $a_n$ and $b_n$,  we write $a_n \lesssim b_n$ if there exists some constant $C$ such that $a_n\le C b_n$ for all $n$. We write $a_n\asymp b_n$ if $a_n\lesssim b_n$ and $b_n\lesssim a_n$.  
For any matrix $\MM \in \RR^{d_1\times d_2}$ with $d_1\ge d_2$, we use  $P_ \MM $ to denote the projection matrix onto the column space of $\MM$ and write $Q_{\MM} = \II_{d_1}-P_{\MM}$.

\section{Proposed Method}\label{sec_method_krr}

In this section, we provide details of the proposed method that combines OLS with KRR and discuss its computation.  Stack the responses into   $\YY=(Y_1,\cdots,Y_n)^\T\in \RR^n$ and the features into the design matrix $\XX= (X_1,\cdots , X_n)^\T \in \RR^{n\times d}$. For simplicity, we assume $\rank(\XX)=d$.  For any function $f: \cX \to \RR$, write $f(\XX) =( f(X_1),\cdots,f(X_n))^\T \in \RR^n $. Let $K: \cX \times \cX \to \RR$ be any kernel function that satisfies \cref{ass_pd_K}, and let $\cH_K$ be its corresponding RKHS. Denote by  $\KK$ the  kernel matrix with  $\KK_{ij} = n^{-1} K  (X_i,X_j)$ for $i, j\in [n]$. 

The proposed predictor in \eqref{def_f_hat_intro} requires to solve  
\begin{align}\label{def_proposed_crit}
(\wh \alpha, \wh g) 
=\argmin_{\alpha\in \RR^d,  g\in \cH_K}  \left\{ \frac{1}{n} \|\YY -\XX \alpha - g(\XX)\|^2  +\lambda\|g\|_K ^2 \right\}.
\end{align}
We derive the explicit expression for both $\wh\alpha$ and $\wh g$. Fixing any $g \in \cH_K$, we profile out $\alpha$ by solving 
\[
\wh \alpha_g
=\argmin_{\alpha\in \RR^d}  \left\{ \frac{1}{n} \|\YY -  \XX\alpha - g(\XX)\|^2 
+\lambda\|g\|_{K}^2 \right\}  = (\XX^\T\XX)^{-1}\XX^\T (\YY - g(\XX)).
\]
Substituting this into 
\eqref{def_proposed_crit} yields
\[
\wh g
=\argmin_{ g\in \cH_K}  \left\{  \frac{1}{n}  \b\|Q_{\XX}(\YY -g(\XX) )\b\|^2
+\lambda\|g\|_{K}^2 \right\} = \frac{1}{\sqrt{n}}\sum_{i=1}^n \wh w_i K(X_i,\cdot)
\]
where the second equality is due to the representer theorem \citep{kimeldorf1971some}. The coefficients $\wh w = (\wh w_1,\ldots, \wh w_n)^\T$ are given by 
\begin{align}\label{coeff}
    \wh w & = \argmin_{w \in \RR^n}  \left \{
    {1\over n}\left\| Q_{\XX} (\YY - \sqrt{n} ~ \KK w ) \right\|^2
    +\lambda   w^\T \KK w \right \} = {1\over \sqrt n}(Q_{\XX} \KK+\lambda \II_n)^{-1}Q_{\XX} \YY.
\end{align}
Proof of \eqref{coeff} is given in  \cref{app_pf_sec_2}. 
Consequently, the optimal solution in \eqref{def_proposed_crit} has the expression 
$
    \wh \alpha =(\XX^\T\XX)^{-1}\XX^\T (\YY -  \wh g(\XX)) 
$
and 
$ \wh g(\XX) = \KK \wh w$ 
so that the fit is 
\begin{equation}\label{fit}
    \wh f(\XX) = \XX \wh \alpha + \wh g(\XX) =  P_{\XX}\YY + Q_{\XX}\wh g(\XX) = P_{\XX}\YY +  Q_{\XX}\KK(Q_{\XX} \KK+\lambda \II_n )^{-1}Q_{\XX} \YY.
\end{equation}
 For a new data point $x\in \cX$,  its corresponding response is predicted by
\begin{align} \label{proposed_m}
    \wh f(x) = x^\T\wh \alpha +   \wh g (x) = x^\T  (\XX^\T \XX)^{-1}  \XX ^\T( \YY -    \wh g (\XX) ) +   \wh g (x).
\end{align}

The main computational cost lies in \eqref{coeff}, which is equivalent to solving a standard KRR problem. Therefore, our method neither introduces additional regularization parameters nor incurs extra computational complexity.
Furthermore, our method can be readily integrated with acceleration techniques for KRR, such as random sketch \citep{alaoui2015fast} and random features \citep{rahimi2007random}, to further improve computational efficiency.

\begin{remark}[A two-step procedure]\label{rem_twostep}
    We contrast our proposed procedure with a natural two-step procedure that first regresses the responses onto $\XX$ using OLS, and then adopts KRR on the resulting residuals. Specifically, define $\wh \alpha _{\two}=  (\XX^\T \XX)^{-1}  \XX ^\T \YY$ as the OLS estimator.  
    The  residuals of predicting $\YY$  using $\XX \wh \alpha_{\two}$ are  $  Q_  {\XX} \YY$. Applying KRR to regress $ Q_{\XX} \YY$ onto $\XX$  amounts to solving
    \[
        \wh g_{\two}
        = \argmin_{ g\in \cH_K}  \left\{  \frac{1}{n}  \b\|Q_{\XX}\YY -g(\XX) \b\|^2
        +\lambda\|g\|_{K}^2 \right\},
    \]
    which,  due to the representer theorem, has a solution with its explicit form given in \eqref{def_g_two}.
    The corresponding fit of such two-step procedure is 
    \[
        \wh f_{\two}(\XX) = \XX \wh\alpha_{\two} + \wh g_{\two}(\XX) = P_{\XX} \YY  + \KK (\KK + \lambda \II_n)^{-1}Q_{\XX} \YY.
    \]
    Comparing with the fit in \eqref{fit}, the two-step procedure does not left-project $\KK$ onto $Q_{\XX}$. 
    This distinction can also be understood from an iterative perspective: initializing with $\wh g_0\equiv 0$, at each step $t\ge1$ and given $\wh g_{t-1}$ from the previous iteration, regress the residuals $\YY-\wh g_{t-1}(\XX)$ onto $\XX$ using OLS to obtain
    $
        \wh \alpha_t
        =
        (\XX^\T\XX)^{-1}\XX^\T(\YY-\wh g_{t-1}(\XX)),
    $
    and subsequently apply KRR to regress $\YY-\XX\wh \alpha_t$ onto $\XX$ to obtain $\wh g_t$. If the algorithm stops after $T$ iterations, the resulting predictor is $\wh f_T(\XX)  =  \XX \wh  \alpha_T  + \wh g_T(\XX)$.
    The two-step procedure described above corresponds to terminating after the first iteration ($T=1$). In contrast, the proposed estimator can be interpreted as the fixed-point limit of this iterative procedure, while still admitting the explicit closed-form solution \eqref{proposed_m}. We defer a theoretical comparison between these procedures to \cref{app_comp_two}, while their empirical comparison is presented in \cref{sec_simu_comp_two}.
\end{remark}

\section{Theoretical Analysis}\label{sec_theory_proposed}
In this section, we establish theoretical guarantees for the proposed estimator $\wh f$ in \eqref{proposed_m} in terms of prediction risk, and contrast them with those of the standard KRR estimator $\wt g$ in \eqref{def_krr}. The fixed design setting is analyzed in \cref{sec_theory_fixed}, while the random design setting is studied in \cref{sec_theory_random}.

\subsection{Prediction risk bounds under the fixed design}\label{sec_theory_fixed}

We begin by imposing the following assumption on the kernel function. 
\begin{assumption}\label{ass_pd_K}
    The kernel function $K:{ \cX \times \cX} \to \RR$ is   symmetric and positive semi-definite.\footnote{We say $K$ is positive semi-definite if for all finite sets $\{x_1,\ldots,x_m\}\subset \cZ$ the $m\times m$ matrix  whose $(i,j)$ entry is $K(x_i, x_j)$ is positive semi-definite.} 
\end{assumption}
For any $K$ satisfying \cref{ass_pd_K}, there exists a unique Hilbert space $\cH_K$ on $\cX$ for which $K$ is the reproducing kernel \citep{Aronszajn1950}. That is, for all $x \in \cX$,
\begin{equation}\label{reproducing}
\langle f, K_x \rangle_K = f(x), \qquad \forall~ f \in \cH_K,
\end{equation}
where we write $K_x := K(\cdot, x)$. More specifically, $\cH_K$ is defined as the completion of the linear span of $\{K_x : x \in \cX\}$, equipped with the inner product $\langle \cdot, \cdot \rangle_K$. Our analysis below depends on the $n\times n$ kernel matrix $\KK$ via its eigenvalues, denoted as  $\wh {\mu}_1 \ge \wh {\mu}_2 \ge \cdots\ge \wh {\mu}_n\ge0$.

To illustrate the benefit of the proposed predictor, we start by stating an oracle inequality of the prediction risk of the standard KRR $\wt g$ from \eqref{def_krr}.  Recall its prediction risk $\cR(\wt g-f^*)$ from  \eqref{def_risk}. 

\begin{theorem}\label{thm_krr_fix}
    Grant model (\ref{model}) and
    Assumption   \ref{ass_pd_K}.  For any $\lambda >0$, one has 
    \begin{equation}\label{rate_krr_fix}
        \cR(\wt g-f^*) ~ \le ~    \inf_{f\in \cH_K}\left\{  \cR(f-f^*) + \lambda \|f\|_K^2  \right\}+ {\sigma^2  \over n}  \sum_{i=1}^n\left(\wh \mu_i\over \wh \mu_i + \lambda\right)^2.
    \end{equation}
\end{theorem}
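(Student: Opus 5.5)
The plan is to condition on the design $X_1,\ldots,X_n$ and use the closed form of KRR to split the risk into a variance term and a bias term. I would then bound the bias by the penalized objective of a noiseless KRR problem, which is where the sharp constant $1$ in front of the infimum comes from.

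\emph{Closed form and decomposition.} By the representer theorem, exactly as in the derivation of \eqref{coeff} with $Q_{\XX}$ replaced by $\II_n$, the fitted vector is $\wt g(\XX)=S\YY$ with $S:=\KK(\KK+\lambda\II_n)^{-1}$. Writing $\YY=f^*(\XX)+\bm\epsilon$, we get
\[
\wt g(\XX)-f^*(\XX) = -(\II_n-S)f^*(\XX)+S\bm\epsilon .
\]
Conditionally on the design, $\EE[\bm\epsilon]=0$ and $\mathrm{Cov}(\bm\epsilon)=\sigma^2\II_n$, using independence across $i$ and the conditional moment assumptions in \eqref{model}. Hence the cross term vanishes and
\[
\tfrac1n\EE\|\wt g(\XX)-f^*(\XX)\|^2 = \tfrac1n\|(\II_n-S)f^*(\XX)\|^2+\tfrac{\sigma^2}{n}\tr(S^2).
\]
Since $S$ is symmetric with eigenvalues $\wh\mu_i/(\wh\mu_i+\lambda)$, we have $\tr(S^2)=\sum_{i=1}^n\bigl(\wh\mu_i/(\wh\mu_i+\lambda)\bigr)^2$. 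This is exactly the variance term in \eqref{rate_krr_fix}.

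\emph{Bias via a noiseless oracle.} Let $g_0$ be the KRR solution with $\YY$ replaced by $f^*(\XX)$, i.e.\ $g_0$ minimizes $L(g):=\frac1n\|f^*(\XX)-g(\XX)\|^2+\lambda\|g\|_K^2$ over $\cH_K$. By the same closed form, $g_0(\XX)=Sf^*(\XX)$, so
\[
\tfrac1n\|(\II_n-S)f^*(\XX)\|^2\le L(g_0)\le L(f)\qquad\text{for every } f\in\cH_K .
\]
This gives bias $\le \frac1n\|f(\XX)-f^*(\XX)\|^2+\lambda\|f\|_K^2$ for every $f\in\cH_K$, which holds for each realization of the design.

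\emph{Taking expectations.} Combining the two bounds and taking the infimum over $f$ proves the claim for a fixed design. For a random design, I would take expectation over $\XX$ and use $\EE\inf_f\{\cdot\}\le\inf_f\EE\{\cdot\}$, noting that $\lambda\|f\|_K^2$ is deterministic. The only point requiring care is the second step: one must bound the bias by the \emph{penalized} objective value $L(g_0)$, which carries the extra nonnegative term $\lambda\|g_0\|_K^2$, rather than analyzing the operator $\II_n-S=\lambda(\KK+\lambda\II_n)^{-1}$ directly. The direct operator analysis would lose a constant factor, whereas going through $L(g_0)$ is what keeps the oracle inequality sharp.
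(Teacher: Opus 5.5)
Your proposal is correct and follows essentially the same route as the paper. Both arguments use the closed form $\wt g(\XX)=\KK(\KK+\lambda\II_n)^{-1}\YY$ and the exact bias--variance split, with variance $\frac{\sigma^2}{n}\tr\bigl(\KK^2(\KK+\lambda\II_n)^{-2}\bigr)$. Both then bound the bias by the penalized objective of the noiseless KRR solution (the paper's $f_\lambda$, your $g_0$), which is what keeps the leading constant equal to one. Your extension to random designs is not needed here, since the theorem is a fixed-design statement.
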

\begin{proof}
    The proof is given in \cref{app_proof_thm_krr_fix}.
\end{proof}

The first term on the right-hand side of \eqref{rate_krr_fix} represents the approximation error due to restricting the search to $\cH_K$. Since its leading constant is one, the oracle inequality in \cref{thm_krr_fix} is sharp. The second term captures the bias introduced by regularization. Together, these two terms reflect how KRR selects an $f \in \cH_K$ that balances approximation error and regularization-induced bias. The third term, on the other hand, corresponds to the variance of $\wt g$, which depends on the kernel matrix $\KK$ and the regularization parameter, but is independent of $f^*$. 
As $\lambda$ increases, the bias increases while the variance decreases. Even in the ideal setting where $f^* \in \cH_K$, achieving fast rates, such as parametric rates, in \eqref{rate_krr_fix} requires $\lambda$ to be large. This, in turn, necessitates a small or even vanishing $\|f^*\|_K^2$, which does not hold in general. This limitation motivates our subsequent analysis of the proposed method. In particular, we show that it attains a significantly smaller approximation error and bias than KRR, while incurring no larger variance up to the term $\sigma^2 d/n$. 

The derivation of the oracle inequality in \cref{thm_krr_fix} is not difficult; however, to the best of our knowledge, it is to some extent a new result for KRR. In the context of ridge regression when $\cH_K$ is finite-dimensional, \citet[Proposition 1]{hsu2014random} provides a bound on $\cR(\wt g - f_{\cH})$ for fixed design, where $f_{\cH}$ attains the minimum of $\cR(f - f^*)$ over $f\in \cH_K$. Most existing oracle inequalities for KRR are formulated for out-of-sample prediction risk; see, for instance, \cite{mourtada2022elementary} and the references therein. However, these results are either not sharp with respect to the leading constant of the irreducible term, or rely on assumptions such as the well-specified model $f^* \in \cH_K$ and the boundedness of the kernel function. \\

We proceed to state the risk bound of the proposed predictor. 
To facilitate understanding, we start by decomposing the regression function $f^*:\cX \to \RR$ as   
 \begin{align}\label{constructive_decom}
     f^*(x) =  x^\T \alpha + (f^*(x) - x^\T \alpha) =: x^\T \alpha + f_\alpha(x)
 \end{align}
where  $\alpha$ is any vector in $\RR^d$. 
The overall risk of predicting $f^*$ thus arises from capturing the linear and nonlinear components separately.  When the signal is entirely linear, i.e., $f_\alpha \equiv 0$ for some $\alpha$, a single OLS predictor yields the prediction risk $\sigma^2 (d/n)$. At the other extreme, when there is no linear signal, i.e., the best choice of $\alpha$ in \eqref{constructive_decom} is zero, using  KRR alone yields the bound in \cref{thm_krr_fix}. In general, an ideal predictor should balance the split between the linear and nonlinear components by choosing $\alpha$ in \eqref{constructive_decom} adaptively. Our proposed predictor achieves this desideratum, as formalized in \cref{thm_fix} below.

It is also worth emphasizing that the decomposition in \eqref{constructive_decom} is introduced only for analytical purposes. In particular, we do not require either $\alpha$ or $f_\alpha$ to be identifiable. Our results hold uniformly over all possible pairs $(\alpha, f_\alpha)$, in particular, under the infimum taken over all such pairs.

Due to the use of the linear predictor in \eqref{def_proposed_crit}, the new prediction risk depends on the $n\times n$ matrix $Q_{\XX} \KK Q_{\XX}$ through its eigenvalues, denoted by $\wh \nu_1 \ge \wh \nu_2 \ge \cdots \ge \wh \nu_n \ge 0$.
 
\begin{theorem}\label{thm_fix}
Grant model (\ref{model}) and
Assumption  \ref{ass_pd_K}. For any $\lambda >0$, one has
\begin{align}\label{bd_pred_rate_fix}
      \cR(\wh f-f^*) \le   \inf_{\alpha \in \RR^d, f\in \cH_K}\left\{  \cR\l(\langle \alpha,\rangle + f-f^*\r) +  \lambda \|f\|_K^2  \right\}+ {\sigma^2\over n}  \left(  d + \sum_{i=1}^n \left(\wh\nu_i\over \wh\nu_i + \lambda\right)^2\right).
\end{align}    
\end{theorem}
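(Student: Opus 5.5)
Fix any $\alpha\in\RR^d$ and $f\in\cH_K$ and work conditionally on the design, so that $\EE$ acts on the noise only. Averaging over $\XX$ at the end is harmless because the final bound is pointwise. The fit in \eqref{fit} is linear in $\YY$, namely $\wh f(\XX)=S\YY$ with
\[
S = P_{\XX} + Q_{\XX}\KK(Q_{\XX}\KK+\lambda\II_n)^{-1}Q_{\XX}.
\]
The plan has three steps: (i) decompose the risk into bias and variance; (ii) bound the variance through the spectrum of $Q_{\XX}\KK Q_{\XX}$; (iii) bound the bias by an optimization argument against the comparator $(\alpha,f)$.

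\textbf{Variance.} Writing $\boldsymbol\epsilon=\YY-f^*(\XX)$, we have
\[
n\,\cR(\wh f-f^*) = \|(S-\II_n)f^*(\XX)\|^2 + \sigma^2\tr(S^\T S).
\]
We rewrite $S$ in symmetric form using the push-through identity $\KK(Q\KK+\lambda\II)^{-1}Q = \KK Q(Q\KK Q+\lambda\II)^{-1}$ on the range of $Q=Q_{\XX}$. This gives $S=P_{\XX}+M$ with $M:=Q\KK Q(Q\KK Q+\lambda\II_n)^{-1}Q$. The matrix $M$ is symmetric, its range lies in the range of $Q$, and it is orthogonal to $P_{\XX}$. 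Hence
\[
\tr(S^\T S)=d+\tr(M^2)=d+\sum_{i}\Bigl(\frac{\wh\nu_i}{\wh\nu_i+\lambda}\Bigr)^2,
\]
which is exactly the variance term in \eqref{bd_pred_rate_fix}.

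\textbf{Bias.} Let $h$ be the noiseless solution, i.e.\ \eqref{def_proposed_crit} with $\YY$ replaced by $f^*(\XX)$. Then $h(\XX)=Sf^*(\XX)$ and $h=\langle\wh\alpha_0,\cdot\rangle+\wh g_0$. The objective $F(\beta,g)=n^{-1}\|f^*(\XX)-\XX\beta-g(\XX)\|^2+\lambda\|g\|_K^2$ is quadratic and jointly convex in $(\beta,g)$, and $(\wh\alpha_0,\wh g_0)$ is its minimizer. By the standard strong-convexity (Pythagorean) inequality for quadratics, for the comparator $(\alpha,f)$,
\[
F(\alpha,f)\ \ge\ F(\wh\alpha_0,\wh g_0) + \tfrac1n\|\XX(\alpha-\wh\alpha_0)+(f-\wh g_0)(\XX)\|^2+\lambda\|f-\wh g_0\|_K^2 .
\]
Dropping the two nonnegative terms $\tfrac1n\|\cdots\|^2$ and $\lambda\|\wh g_0\|_K^2$ yields
\[
\tfrac1n\|h(\XX)-f^*(\XX)\|^2 \le \tfrac1n\|\XX\alpha+f(\XX)-f^*(\XX)\|^2+\lambda\|f\|_K^2 = \cR(\langle\alpha,\cdot\rangle+f-f^*)+\lambda\|f\|_K^2 .
\]

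\textbf{Conclusion and main obstacle.} Combining the two parts and taking the infimum over $(\alpha,f)$ gives \eqref{bd_pred_rate_fix}. The only delicate point is the variance computation: $S$ is not obviously symmetric, and we must show that its nonlinear part equals a function of $Q\KK Q$ alone. I would verify this via $Q\KK(Q\KK+\lambda)^{-1}Q = Q\KK Q(Q\KK Q+\lambda)^{-1}Q$. This identity follows from $(Q\KK+\lambda)^{-1}Q = Q(Q\KK Q+\lambda)^{-1}$ on vectors in the range of $Q$, which one checks by multiplying $Q(Q\KK Q+\lambda)^{-1}v$ by $(Q\KK+\lambda)$ and using $Q^2=Q$. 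If $\cH_K$ is infinite-dimensional, the representer theorem reduces the strong-convexity step to a finite-dimensional quadratic in $(\beta,w)$ plus an orthogonal component. That orthogonal component only increases $\|f\|_K$, so the argument goes through unchanged.
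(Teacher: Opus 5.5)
Your proposal is correct and follows essentially the paper's proof. It uses the same bias--variance split of the linear smoother and the same identification of its nonlinear part with $\VV\VV_\lambda^{-1}$, where $\VV=Q_{\XX}\KK Q_{\XX}$: the paper gets this from the Woodbury identity in Lemma~\ref{lem_rem_Q_PQ_lbd}, you from invariance of the range of $Q_{\XX}$. It also uses the same bias bound, comparing the noiseless penalized solution with an arbitrary comparator; the paper phrases this through the profiled minimizer $f_\lambda$ in \eqref{def_f_lambda}, which is exactly your $\wh g_0$ once $\alpha$ is profiled out.

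Your strong-convexity inequality is more than you need: plain optimality $F(\wh\alpha_0,\wh g_0)\le F(\alpha,f)$, followed by discarding $\lambda\|\wh g_0\|_K^2$, suffices. If you keep the inequality, the terms to discard are $\tfrac1n\|\cdots\|^2$ and $\lambda\|f-\wh g_0\|_K^2$, with $\lambda\|\wh g_0\|_K^2$ then dropped from $F(\wh\alpha_0,\wh g_0)$.
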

\begin{proof}
   The full proof is given in \cref{app_proof_thm_fix}. 
\end{proof}

The risk bound in \eqref{bd_pred_rate_fix} also decomposes into approximation error, regularization-induced bias, and variance. Compared with the variance term in \cref{thm_krr_fix}, the third term in \eqref{bd_pred_rate_fix} consists of both the OLS variance $\sigma^2(d/n)$ and the variance of the fit $\wh g(\XX)$ in \eqref{fit}, with the latter lying in the projected space induced by $Q_{\XX}$. By the min-max characterization of the eigenvalues of $\KK$ and $Q_{\XX} \KK Q_{\XX}$, we prove in \cref{lem_comp} of \cref{app_proof_lemma_thm_fix} that
\begin{equation}\label{var_comp}
  \sum_{i=1}^n  \left(\wh\nu_i \over \wh\nu_i + \lambda\right)^2~ \le  ~  \sum_{i=1}^n  \left(\wh\mu_i \over \wh\mu_i + \lambda \right)^2. 
\end{equation}
Thus, for the same $\lambda$, the variance of $\wh f$ is no greater than that of standard KRR $\wt g$, up to the parametric term $\sigma^2 (d/n)$, which is often negligible, as discussed in the random design setting (see \cref{rem_var}). On the other hand, the proposed predictor can typically accommodate a larger choice of $\lambda$ than KRR due to its smaller approximation error and bias, as detailed below.

The first two terms in \eqref{bd_pred_rate_fix} correspond to the approximation error and the regularization-induced bias of the proposed predictor. 
 Comparing to \cref{thm_krr_fix},  we clearly have  
\begin{align}\label{eq_bd_illu}
    \inf_{\alpha\in\RR^d, f\in \cH_K}\left\{   \cR(\langle \alpha,\rangle + f-f^*)    + \lambda\|f\|_K^2  \right\}  ~ \le ~  \inf_{f\in \cH_K}\left\{   \cR(f-f^*)    + \lambda\|f\|_K^2  \right\},  
\end{align}
implying that incorporating a linear component in $\wh f$ improves the approximation of $f^*$ by optimally balancing the linear and nonlinear parts. Together with \eqref{var_comp}, this implies that for any $\lambda>0$, the risk bound in \cref{thm_fix} converges at least as fast as that in \cref{thm_krr_fix}, up to the additional term $\sigma^2(d/n)$. The benefit of \eqref{eq_bd_illu}, however, can be more pronounced, as illustrated in the remarks below. Technically, establishing the combined bias-approximation term on the left-hand side of \eqref{eq_bd_illu} requires more work because of the left projection $Q_{\XX}$ applied to $\KK$, especially when allowing $f^* \notin \cH_K$ and seeking to establish a sharp oracle inequality.
A key step  is to introduce an intermediate function $f_\lambda$,   defined as the  solution to 
  \[
 \min_{f\in \cH_K} \frac{1}{n} \|Q_{\XX} f(\XX) -Q_{\XX}  f^*(\XX)\|^2  +\lambda\|f\|_K ^2.
  \]
  The bias-approximation term is then related to the $\|Q_{\XX} f_\lambda(\XX) - Q_{\XX} f^*(\XX)\|^2$.

 \begin{remark}[Recover the parametric rate under linear setting]\label{rem_comp_ols}
Consider the case when
$f^*(X) = X ^\T  \alpha^*$ for some $\alpha^*\in \RR^d$.  By choosing $\alpha= \alpha^*$ and $f =0 $ within the infimum in  \eqref{bd_pred_rate_fix}, both the approximation error and the bias are zero, so that for any $\lambda >0$,
\[
\cR(\wh f-f^*)
~ \le ~ {\sigma^2 d\over n}   + {\sigma^2 \over n}\sum_{i=1}^n\left(\wh\nu_i\over \wh\nu_i + \lambda\right)^2 ~\le~     {\sigma^2 d\over n} + {\sigma^2\over n \lambda^2}   \sum_{i=1}^n\wh\nu_i^2.
\] 
Choosing any  $\lambda$ such that $\lambda^2 \gg  d^{-1} \sum_{i=1}^n\wh\nu_i^2 $ gives
$\cR(\wh f-f^*) 
= (1+o(1)) \sigma^2 (d/n) $,
recovering the parametric rate of OLS.  In sharp contrast, the bound of KRR  in \cref{thm_krr_fix} is difficult to achieve such parametric rate for general kernels. 
\end{remark}

The next remark illustrates that the advantage of having smaller approximation and bias terms in \eqref{eq_bd_illu} persists more generally whenever linear signal is partially present.

\begin{remark}[Allow larger $\lambda$ for optimizing the risk bound]\label{rem_lambda}
    From \eqref{eq_bd_illu}, our proposed predictor explicitly captures the linear component in $f^*$ through the infimum over $\alpha$. As a result, the function $f \in \cH_K$ achieving the infimum in the first term of \eqref{bd_pred_rate_fix} typically has a smaller RKHS norm $\|f\|_K^2$. This allows choosing a larger $\lambda$ to reduce the variance term in \eqref{bd_pred_rate_fix}, without requiring a tight trade-off with bias. Consequently, our predictor can attain a smaller prediction risk than KRR. Empirical evidence supporting this behavior is provided in Table \ref{tab_lambda}, where our method consistently selects larger—and in some cases substantially larger—values of $\lambda$ to optimally balance the error terms in the risk bound.
\end{remark}

\subsection{Prediction risk bounds under the random design}\label{sec_theory_random}

The risk bounds in \cref{thm_krr_fix,thm_fix} depend on the design $\XX$ and the kernel matrix $\KK$. In this section we provide deterministic upper bounds when $X_1,\ldots, X_n$ are i.i.d. copies of a random vector $X\in\cX$ from a probability measure $\rho$. Denote by $\cL_2(\rho)$ the space of measurable, square-integrable functions, equipped with the inner product $\langle \cdot , \cdot \rangle_\rho$. We assume $\EE\|X\|^2 < \infty$, so that $\cL_2(\rho)$ contains all linear functions of $X$. 

To quantify the randomness related with $\KK$, we require the kernel function to be continuous, which is standard in the literature and satisfied by many common kernels.
 \begin{assumption}\label{ass_bd_K}
The kernel function $K$ is continuous. 
\end{assumption}

A key element in our analysis is the integral operator  $L_K:  \cL_2(\rho) \to \cL_2(\rho)$, defined  as
\begin{align}\label{def_L_K}
    L_K f:=\int_{\cX} K_{x} f(x)\d\rho(x). 
\end{align}
We assume a standard condition that   $L_K$ can be eigen-decomposed.
\begin{assumption}\label{ass_mercer}
There exist a sequence of eigenvalues $\{\mu_j\}_{j=1}^\infty$ arranged in non-increasing order and corresponding eigenfunctions $\{\phi_j\}_{j=1}^\infty$ that form  an
orthonormal basis of $\cL^2(\rho)$, such that
$
L_{K}   =   \sum_{j=1}^\i \mu_{j}  \langle\phi_{j}, \cdot \rangle_\rho~  \phi_{j}. 
$
\end{assumption}
 \cref{ass_mercer} is guaranteed, for example, by Mercer's theorem \citep{mercer1909xvi} when $\cX$ is compact and \cref{ass_bd_K} holds, though it can also hold in more general settings (see, e.g., \citet{steinwart2012mercer}).  
 Under Assumption \ref{ass_mercer},  
the kernel function $K$ admits the representation
\begin{align}\label{eq_eigen_decomp}
K(x,x')=\sum_{j=1}^\infty\mu_j\phi_j(x)\phi_j(x'), \qquad  \forall ~ x,x'\in\cX. 
\end{align}
In the case $\mu_j>0$ for all $j\ge 1$, $\cH_K$ can be equivalently  written as 
$
\cH_K=  \{f=\sum_{j=1}^\infty\gamma_j\phi_j:
\ \sum_{j=1}^\infty {\gamma_j^2 }/{\mu_j}
<\infty  \}.
$
If for some $k\in \NN$, $\mu_j > 0$ for $j\le k$ and $\mu_j=0$ for all $j>k$, $\cH_K$ reduces to a $k$-dimensional function space spanned by  $\{\phi_1,\ldots, \phi_k\}$.  In this paper, we restrict attention to nondegenerate kernels, that is, $\mu_1>0$. Extending the results to the degenerate case is straightforward.

In view of the risk bounds in \cref{thm_krr_fix,thm_fix} under the fixed design, our analysis requires controlling the variance terms involving the empirical eigenvalues $\{\wh \mu_j\}_{j \in [n]}$ and $\{\wh \nu_j\}_{j \in [n]}$ in terms of the population eigenvalues $\{\mu_j\}_{j \ge 1}$. The latter are in turn linked to the complexity of $\cH_K$, commonly quantified by the kernel complexity function defined as
\begin{equation}\label{kernel_complexity}
      R(\lambda) = \biggl({1\over n}\sum_{j=1}^\i \min\{\lambda,\mu_j\}\biggr)^{1/2},\qquad \forall~  \lambda \ge 0.
\end{equation}
The following theorem provides non-asymptotic risk bounds for the proposed predictor $\wh f$ in \eqref{def_proposed_crit} under the random design setting.
  
\begin{theorem}\label{thm_random}
Grant model (\ref{model}) and
Assumptions  \ref{ass_pd_K} -- \ref{ass_mercer}. For any $\lambda >0$, one has
\begin{align}\label{bd_pred_rate}
 \cR(\wh f-f^*) \le   \inf_{\alpha \in \RR^d, f\in \cH_K}\left\{  \cR\l(\langle \alpha,\rangle + f-f^*\r) +  \lambda \|f\|_K^2  \right\}+  \sigma^2   \left(  \frac{d}{n}+\frac{R(\lambda)^2}{\lambda}  \right).
\end{align}   
\end{theorem}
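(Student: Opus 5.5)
The plan is to condition on the design $\XX$ and apply \cref{thm_fix}, which holds for every realization of the design. Then take expectation over $X_1,\ldots,X_n$. The risk $\cR$ in \eqref{def_risk} already averages over the randomness of the design. So the bias--approximation term becomes, by Jensen (the expectation of an infimum is at most the infimum of expectations), at most
\[
\inf_{\alpha,f}\Big\{\cR(\langle\alpha,\rangle+f-f^*)+\lambda\|f\|_K^2\Big\},
\]
with $\cR$ now including the expectation over $\XX$. The OLS variance term $\sigma^2 d/n$ passes through unchanged. It therefore remains to show
\[
\frac1n\,\EE\sum_{i=1}^n\Big(\frac{\wh\nu_i}{\wh\nu_i+\lambda}\Big)^2\le \frac{R(\lambda)^2}{\lambda}.
\]

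For the variance term, I would first use \eqref{var_comp} (\cref{lem_comp}) to replace $\wh\nu_i$ by $\wh\mu_i$. Next, I would bound $(\frac{\wh\mu_i}{\wh\mu_i+\lambda})^2\le \frac{\wh\mu_i}{\wh\mu_i+\lambda}\le \min\{1,\wh\mu_i/\lambda\}=\lambda^{-1}\min\{\lambda,\wh\mu_i\}$. This reduces the claim to
\[
\EE\sum_{i=1}^n\min\{\lambda,\wh\mu_i\}\le \sum_{j\ge1}\min\{\lambda,\mu_j\}.
\]
The map $A\mapsto \sum_i\min\{\lambda,\lambda_i(A)\}$ is concave on positive semidefinite matrices. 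Indeed, it equals $\min_{0\preceq P\preceq \II,\,\cdots}$, or more concretely $\min_{k}\{k\lambda+\sum_{i>k}\lambda_i(A)\}$, and each $\sum_{i>k}\lambda_i(A)$ is concave by Ky Fan. It is also unitarily invariant.

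I would use the feature representation $\KK=\Phi\Phi^\T$ with $\Phi_{ij}=n^{-1/2}\sqrt{\mu_j}\phi_j(X_i)$, justified by \eqref{eq_eigen_decomp} under \cref{ass_bd_K,ass_mercer}. The nonzero eigenvalues of $\KK$ are then those of the operator $\Phi^\T\Phi=\frac1n\sum_i z_iz_i^\T$, where $z_i=(\sqrt{\mu_j}\phi_j(X_i))_j$ lives in $\ell_2$. The vectors $z_i$ are i.i.d. with $\EE z_iz_i^\T=\mathrm{diag}(\mu_j)$, and $\EE\|z_i\|^2=\sum_j\mu_j$ is finite as long as the trace is finite; if it is not, $R(\lambda)$ stays well defined and I would use a truncation argument. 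By Jensen for the concave spectral function,
\[
\EE\sum_i\min\{\lambda,\lambda_i(\Phi^\T\Phi)\}\le \sum_j\min\{\lambda,\mu_j\}=nR(\lambda)^2.
\]
Dividing by $n\lambda$ gives $\frac{R(\lambda)^2}{\lambda}$.

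The main obstacle is the infinite-dimensional Jensen step. One needs the operator $\Phi^\T\Phi$ to be trace class, and the concave function $F(A)=\min_k\{k\lambda+\sum_{i>k}\lambda_i(A)\}$ to be well defined on trace-class operators. I would handle this by truncating to the first $J$ eigenfunctions, $K^{(J)}=\sum_{j\le J}\mu_j\phi_j\phi_j$, and applying matrix Jensen in $\RR^J$. Monotonicity in $J$ holds because $\KK^{(J)}\preceq\KK$ and $F$ is operator-monotone through eigenvalue monotonicity. Letting $J\to\infty$ then uses pointwise convergence of the series, from continuity and Mercer, together with monotone convergence. A concrete concavity proof would also be needed; I would write $\sum_i\min\{\lambda,\lambda_i(A)\}=\inf_{0\preceq P\preceq\II}\{\lambda\,\mathrm{tr}(\II-P)+\mathrm{tr}(PA)\}$ in finite dimensions, which is an infimum of affine functions and hence concave.
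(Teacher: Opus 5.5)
Your proposal is correct and follows the paper's proof. You apply \cref{thm_fix} conditionally on the design, move the expectation inside the infimum, use \cref{lem_comp} with $(t/(t+\lambda))^2\le t/(t+\lambda)$, and then apply Jensen to a concave trace functional of the empirical covariance operator; your $\frac1n\sum_i z_i z_i^\T$ is exactly $\wh T_K$ written in the basis $\sqrt{\mu_j}\phi_j$ of $\cH_K$, and its mean $\mathrm{diag}(\mu_j)$ is $T_K$. The differences are minor: you apply Jensen to $t\mapsto\min\{\lambda,t\}$ rather than to $t\mapsto t/(t+\lambda)$ followed by $\mu_k/(\mu_k+\lambda)\le\lambda^{-1}\min\{\lambda,\mu_k\}$, and your truncation and monotone-convergence argument supplies the infinite-dimensional justification of Jensen that the paper leaves implicit.
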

\begin{proof}
    The proof bounds the variances in \eqref{var_comp} by $R(\lambda)^2 /\lambda$, and is given in \cref{app_proof_thm_random}.
\end{proof}

To explain the bound in \eqref{bd_pred_rate}, we first note that  for any given $\alpha\in\RR^d$ and $f\in\cH_K$,
\[
    \cR\l(\langle \alpha,\rangle + f-f^*\r) = \EE[f(X)+X^\T \alpha - f^*(X)]^2  =: \|f + \langle \alpha, \rangle - f^*\|_\rho^2
\]
where $X \sim \rho$. Let $\Pi_X$ denote the $\cL_2(\rho)$ projection operator onto the linear span of $X$, and $\Pi_X^\perp$ denote its orthogonal complement. The infimum of the bound in \eqref{bd_pred_rate} gets simplified to 
\[
    \inf_{f\in\cH_K}  ~  \|\Pi_X^\perp (f -f^*)\|_\rho^2 + \lambda \|f\|_K^2 
\]
To get more transparent expression, we consider two scenarios in the next two remarks.  

\begin{remark}[Explicit bounds under well-specified settings]\label{rem_correct}
When $\Pi_X^\perp f^* \in \cH_K$,  one has 
\[
    \cR(\wh f-f^*) ~ \le~     \lambda \|\Pi_X^\perp f^*\|_K^2  + \sigma^2 \frac{R(\lambda)^2}{\lambda} + \sigma^2  {d\over n},\qquad \forall~  \lambda >0.
\]
To find the optimal $\lambda$ that minimizes the right-hand side, we derive two bounds. Since $R(\lambda)$ in \eqref{kernel_complexity} is a sub-root function of $\lambda$,\footnote{A function $\psi: [0,\i)\to  [0,\i)$ is a sub-root function if it is nonnegative, non-decreasing, and if $\delta\mapsto \psi(\delta)/\sqrt{\delta}$ is non-increasing for $\delta>0$.} it admits a unique positive solution to $R(\lambda)=\lambda$ \citep[Lemma 3.2]{bartlett2005local}. This solution, denoted by $\delta_n$, is known as the {\em critical radius}. Using the property of sub-root functions (see, \cref{lem_subroot_monotone} of  \cref{app_proof_thm_random})  gives
\begin{equation}\label{bound_simp_1}
    \cR(\wh f-f^*) ~ \le~     \left( \|\Pi_X^\perp f^*\|_K^2  + \sigma^2 \right) \lambda  + \sigma^2  {d\over n},\qquad \forall ~ \lambda \ge \delta_n.
\end{equation}
Alternatively, using $\sum_{j\ge 1} \mu_j \le \EE[K(X,X)]  =: \kappa^2$ from \eqref{eq_eigen_decomp}, it is easy to see that 
\begin{equation}\label{bound_simp_2}
    \cR(\wh f-f^*) ~ \le~      \lambda \|\Pi_X^\perp f^*\|_K^2  + \sigma^2 {\kappa^2 \over n\lambda} + \sigma^2  {d\over n},\qquad \forall~  \lambda >0.
\end{equation}
Optimizing \eqref{bound_simp_2} over $\lambda$, in conjunction with  \eqref{bound_simp_1}, gives 
\begin{equation}\label{bound_simp}
     \cR(\wh f-f^*) ~ \le~  \sigma^2  {d\over n}  +   \l\{\begin{array}{ll}
          \displaystyle  ( \|\Pi_X^\perp f^*\|_K^2  + \sigma^2 )\delta_n   &   \text{if}~~    {1 \over 2\sigma}\|\Pi_X^\perp f^*\|_K \in \l[{\delta_n\sqrt{n}\over \kappa}, ~ {\kappa\over \delta_n\sqrt n}\r]\vspace{2mm}\\
        \displaystyle 2\sigma \|\Pi_X^\perp f^*\|_K  {\kappa \over \sqrt n} \qquad &  \text{otherwise.}
     \end{array}\r.  
\end{equation}
By the same arguments, one can show that, under the more restrictive assumption that $f^* \in \cH_K$, \eqref{bound_simp} also holds for the standard KRR with $\Pi_X^\perp f^*$ replaced by $f^*$ and without the term $\sigma^2 d/n$.  The advantage of the proposed predictor becomes pronounced when $\|\Pi_X^\perp f^*\|_K \ll \|f^*\|_K$. For instance, if  $f^*$ is close to linear in the sense that  $\|\Pi_X^\perp f^*\|_K =o(d/\sqrt n)$, then the second bound in \eqref{bound_simp} converges to $\sigma^2 d/n$ under an optimal choice of $\lambda$ tending to $\infty$, echoing the discussion in \cref{rem_lambda}. On the other hand, $\|f^*\|_K = o(d/\sqrt{n})$ rarely occurs, even when $f^*$ is exactly linear, indicating that the rate of KRR cannot, in general, reduce to $\cO(d/n)$. When $f^*$ contains significant nonlinear components, we typically end up with the first bound in \eqref{bound_simp} whence the critical radius $\delta_n$ plays an important role. We discuss its rate momentarily. 
\end{remark}

\begin{remark}[Explicit bounds with universal kernels on compact sets]\label{rem_bd_appro}
When $\cX$ is compact and the kernel function $K$ is ``universal'' \citep{micchelli2006universal}, provided that  
$\Pi_X^\perp f^*$ is continuous on $\cX$, for any $\eps >0$, there exists $f_\eps\in\cH_K$ such that
\[
    \cR(\wh f-f^*) ~ \le~      \eps + \lambda  \|f_\eps\|_K^2  + \sigma^2 \frac{R(\lambda)^2}{\lambda} + \sigma^2  {d\over n},\qquad \forall~  \lambda >0.
\]
Similar bound as \eqref{bound_simp} can be obtained with $f_\eps$ in place of $\Pi_X^\perp f^*$. 
\end{remark}
 
\begin{remark}[Minimax optimality]
The minimax optimality relies on the notion of statistical dimension, which is defined as the largest index $j$ for which the kernel eigenvalues $\mu_j$
   exceeds $\delta_n$, that is, $d(\delta_n) = \max\{j\ge 0:   \mu_j \ge \delta_n\}$ with $\mu_0:=\i$.  Any kernel whose eigenvalues satisfy $\sum_{j=1}^{d(\delta_n)} \mu_j \lesssim d(\delta_n)\delta_n$ is said to be {\em regular}, a notion introduced in \cite{yang2017randomized}.
   When $f^*$ belongs to the function class $\cB_{\cH_K}:= \{f \in \cH_K : \|f\|_K^2 \le 1\}$  and the inducing kernel is 
   regular with $d(\delta_n)\ge 128\log 2$, the standard KRR achieves the minimax optimal prediction risk bound of order $\cO(\delta_n)$ (see,  for instance, \citet[Theorem 2]{yang2017randomized} for the fixed design and the adaptation of \citet[Theorem 2]{bing2025kernel} for the random design). To discuss the minimax optimality of \eqref{bound_simp},   consider  the larger function space 
    $$
        \cF := \l\{f\in \cL_2(\rho): \exists~  \alpha\in\RR^d, g\in\cB_{\cH_K}, ~ \text{s.t. } f = \langle \alpha, \rangle + g\r\}.
    $$
    Clearly $\cB_{\cH_K} \cup \{\langle \alpha,\rangle: \alpha \in \RR^d\} \subseteq \cF$. Since the term $\sigma^2 d/n$ corresponds to the standard minimax optimal rates under linear models, combined with the above lower bound $\delta_n$ for $f^*\in \cB_{\cH_K}$, one can deduce that the risk bound $\cR(\wh f-f^*) = \cO(d/n + \delta_n)$ achieves the minimax optimal rate for $f^*\in \cF$, implying the minimax optimality of the proposed predictor. 
\end{remark}

\begin{remark}[No worse than the standard KRR]\label{rem_var}
In \cref{rem_correct}, we have illustrated the benefit of the proposed predictor $\wh f$ relative to the standard KRR $\wt g$ when $f^*$ has a significant linear signal. In this remark, we show that even in the setting $f^* \in \cH_K$, which is favorable to $\wt g$ , the proposed predictor has no worse rate.
Suppose $f^* \in \cB_{\cH_K}$ and $d(\delta_n) \ge 128 \log 2$, in which case $\wt g$ is minimax optimal with prediction risk of order $\delta_n$. Regarding $\wh f$, since one can take $\alpha = 0$ in \eqref{bd_pred_rate}, the argument used to prove \eqref{bound_simp_1} shows that the risk bound of $\wh f$ is at most $(1+\sigma^2)\delta_n + \sigma^2 d/n$. The term $\sigma^2 d/n$, however, can be negligible in general settings.

 For instance, $d(\delta_n) \ge 128 \log 2$ implies $\delta_n \ge c/n$ for some constant $c>0$. As a result, when $d = \cO(1)$, one has $d/n = \cO(\delta_n)$. The condition $\delta_n \ge c/n$ also holds when $\mu_1 \ge c'/n$ for some constant $c'>0$. See \cref{lem_lb_delta_n} in \cref{app_proof_thm_random} for the proof. We emphasize that $\mu_1 \ge c'/n$ is much weaker than the commonly assumed $\mu_1 > c'$ in the existing literature \citep{caponnetto2007optimal,rudi2017generalization}.

 For certain specific kernels, we can further derive explicit expressions for $\delta_n$  (see, for instance, the proof in \citet{bing2025kernel}). Concretely, for polynomial kernels whose eigenvalues of the integral operator $L_K$ satisfy
    $\mu_j \asymp  j^{-2\beta}$ for all $j\ge 1$ with $\beta >1/2$,  we have  
    $
        \delta_n ~ \asymp ~  n^{-{2\beta /(2\beta+1)}}
    $
    to which $\sigma^2 d/n$  becomes negligible when $d = \cO(n^{1/(2\beta+1)})$. Common polynomial kernels include the spline kernel \citep{wahba1990spline} and kernels inducing Besov RKHSs \citep{fischer2020sobolev}. 
A  well-known case with polynomial decaying eigenvalues is the    Sobolev RKHS, with the integer $s>d/2$ being its associated 
smoothness parameter. For such a case, one has  $\mu_j \asymp  j^{-2s/d}$ for all $j\ge 1$  \citep{edmunds1996function} under mild conditions, 
so that
$\sigma^2 d/n$  becomes negligible when $d = \cO(n^{d/(2s+d)})$.  
    For  exponential kernels whose eigenvalues of $L_K$ satisfy $\mu_j \asymp \exp(-\gamma j)$ for all $j\ge 1$  with some constant $\gamma>0$,  we have  $ \delta_n\asymp  \log (n) / n $ to which $\sigma^2 d/n$  becomes negligible whenever $d = \cO(\log n)$. Such kernels include the Gaussian kernel.
\end{remark} 

Although our method is more favorable than KRR in low- or moderate-dimensional settings, the price term $\sigma^2 d/n$ can be large when $d$ is large, due to the use of OLS. In the following section, we replace OLS with ridge regression to address this issue.

\section{Adaptive Kernel Ridge Regression with Ridge Penalty}\label{sec_ridge}

When $d$ is moderate or large, it is necessary to incorporate shrinkage into the proposed method in \cref{sec_method_krr} to better capture the linear component. To this end, we replace OLS with ridge regression and propose to solve 
\begin{align}\label{def_method_ridge}
(\wh \alpha_r, \wh g_r ) 
~ =~ \argmin_{\alpha\in \RR^d,  g \in \cH_K}  \left\{ \frac{1}{n} \sum_{i=1}^n\l (Y_i- X_i ^\T  \alpha - g(X_i)\r)^2 + \mu \|\alpha\|^2  +\lambda \|g\|_K^2 \right\},
\end{align}
where  $\mu\ge 0$ is  an additional parameter to regularize $\|\alpha\|^2$.
Similar to that in \cref{sec_method_krr}, both $\wh \alpha_r$ and 
$\wh g_r $ in \eqref{def_method_ridge} have closed form:
\begin{align} \label{closed_form_ridge}
     \wh \alpha_r = ( \XX ^\T \XX+ n \mu  ~ \II_d) ^{-1}  \XX^\T (\YY -  \wh g_r(\XX)), \qquad 
\wh g_r
~ =~ { 1\over \sqrt{n}} \sum_{i=1}^n \wh \beta_i ~K(X_i,\cdot),
\end{align}
with $\wh \beta  = (1/ \sqrt n)(Q_\mu   \KK+ \lambda \II_n)^{-1} Q_\mu \YY.$ Here  for any $\mu\ge 0$, we write $P_\mu =\XX (\XX^\T\XX + n \mu~  \II_d )^{-1} \XX^\T $ and 
$ Q_\mu =\II_n - P_\mu  $.
We defer detailed derivation of \eqref{closed_form_ridge} to \cref{app_pf_closed_form_ridge}.
Finally,  the fit  is 
\begin{equation}\label{fit_ridge}
    \wh f_r(\XX) :  = \XX \wh \alpha_r + \wh g_r(\XX) = P_{\mu} \YY +  Q_\mu \KK( Q_\mu \KK +\lambda \II_n )^{-1} Q_\mu \YY.
\end{equation}  
When $\mu = 0$ in \eqref{def_method_ridge}, 
the above $\wh f_r$ reduces to $\wh f$ in \eqref{fit}.  
When $\mu \to \infty$, $\wh f_r$ reduces to the standard KRR in \eqref{def_krr}.
On the other hand, as $\lambda\to\infty$, $\wh f_r$ reduces to the ridge regression.

Although we focus on the ridge regression to handle the high-dimensional issue, the proposed framework can be extended to other variants tailored to different model structures.
For instance, when  the linear part in \eqref{constructive_decom} exists sparse structure,    the ridge penalty in \eqref{def_method_ridge} 
can be replaced by lasso-type penalty \citep{tibshirani1996regression},  penalty used in  adaptive lasso \citep{zou2006adaptive}, or others. 
When  the nonlinear part in \eqref{constructive_decom} has dependence only on  some low-dimensional  
representation of $\XX$, the inputs of $g(\cdot)$ in \eqref{def_method_ridge} can be replaced with their predicted values  \citep{bing2025kernel}.\\

To analyze the prediction risk of $\wh f_r$, we can drop the assumption $\rank(\XX)=d$.
Write the eigenvalues of $\wh \Sigma : =  (1/n) \XX^\T\XX $ as $\wh \tau_1\ge\wh \tau_2\ge\cdots\ge \wh \tau_d\ge0$, and the eigenvalues of $Q_\mu \KK Q_\mu$ as $\wt \nu_1 \ge \wt \nu_2 \ge \cdots \ge \wt \nu_n \ge 0$.  
The following theorem states the oracle inequality of prediction risk bound for 
$\wh f_r$ in \eqref{fit_ridge}, with dependence on  the eigenvalues of both $ \wh \Sigma  $ and  $Q_\mu \KK Q_\mu$.

\begin{theorem}\label{thm_fix_ridge}
Grant model (\ref{model}) and
Assumption  \ref{ass_pd_K}. 
For any $\mu, \lambda >0$, one has
\begin{equation}
    \label{bd_pred_rate_fix_ridge}
   \begin{aligned}
  \cR( \wh  f_r -f^*)  ~ \le~    & \inf_{\alpha \in \RR^d, f\in \cH_K}\left\{  \cR\l(\langle \alpha,\rangle + f-f^* \r)+\mu\|\alpha\|^2 +  \lambda \|f\|_K^2  \right\} \\
  & \quad + {2\sigma^2 \over n}  \left( \sum_{i=1}^d \left(\wh\tau_i\over \wh\tau_i  + \mu \right)^2  + \sum_{i=1}^n \left(\wt \nu_i\over \wt\nu_i + \lambda\right)^2\right).
\end{aligned}     
\end{equation}
\end{theorem}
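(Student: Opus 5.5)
The fit \eqref{fit_ridge} is linear in $\YY$: $\wh f_r(\XX)=H\YY$ with
\[
H:=P_\mu+Q_\mu\KK(Q_\mu\KK+\lambda\II_n)^{-1}Q_\mu .
\]
The plan is to split the risk into a bias part $\frac1n\|Hf^*(\XX)-f^*(\XX)\|^2$ and a variance part $\frac{\sigma^2}{n}\tr(H^\T H)$. Both are conditional on $\XX$; under random design we then take expectation, so it is enough to work with fixed $\XX$ and compare against an arbitrary pair $(\alpha,f)$ inside the infimum.

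\textbf{Variance.} Write $H=P_\mu+A$ with $A:=Q_\mu\KK(Q_\mu\KK+\lambda\II_n)^{-1}Q_\mu$. Then
\[
\tr(H^\T H)\le 2\tr(P_\mu^2)+2\tr(A^\T A).
\]
This crude bound is the source of the factor $2$ in \eqref{bd_pred_rate_fix_ridge}. Since $P_\mu$ has eigenvalues $\wh\tau_i/(\wh\tau_i+\mu)$, we get $\tr(P_\mu^2)=\sum_{i=1}^d(\wh\tau_i/(\wh\tau_i+\mu))^2$.

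For $A$ we use the push-through identity. Since $Q_\mu\succeq 0$ (for $\mu>0$ it is positive definite, being $\II_n-P_\mu$ with $\|P_\mu\|<1$), set $S=Q_\mu^{1/2}$ and $M:=S\KK S$. One checks
\[
A=S\,M(M+\lambda\II_n)^{-1}S .
\]
Hence $\tr(A^\T A)\le \|S\|_{\op}^4\,\tr\bigl(M^2(M+\lambda)^{-2}\bigr)$. The eigenvalues of $M$ are those of $\KK^{1/2}Q_\mu\KK^{1/2}$, and therefore those of $Q_\mu\KK$. These should be related to $\wt\nu_i$, the eigenvalues of $Q_\mu\KK Q_\mu=S\,M\,S$. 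This is handled by an operator-monotonicity comparison in the spirit of \cref{lem_comp}: because $0\preceq Q_\mu\preceq\II_n$, the function $t\mapsto(t/(t+\lambda))^2$ is increasing, and the min-max characterization applies. I expect this step to be delicate. If it fails, the fallback is to bound directly $\tr(A^\T A)\le\sum_i(\wt\nu_i/(\wt\nu_i+\lambda))^2$ by writing $A$ through the SVD of $Q_\mu\KK^{1/2}$.

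\textbf{Bias (main obstacle).} The goal is to show, for any $(\alpha,f)$,
\[
\tfrac1n\|(H-\II_n)f^*(\XX)\|^2\le \tfrac1n\|\XX\alpha+f(\XX)-f^*(\XX)\|^2+\mu\|\alpha\|^2+\lambda\|f\|_K^2 .
\]
The route is to mimic the proof of \cref{thm_fix}. The noiseless version of \eqref{def_method_ridge}, with $\YY$ replaced by $f^*(\XX)$, has minimizer $(\alpha_0,g_0)$ whose fit is exactly $Hf^*(\XX)$ by \eqref{fit_ridge}. Denote by $F(\alpha,g)$ the strictly convex quadratic objective with data $f^*(\XX)$. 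Its optimality gives $F(\alpha_0,g_0)\le F(\alpha,f)$ for all $(\alpha,f)$. Moreover, $F(\alpha_0,g_0)\ge \frac1n\|Hf^*(\XX)-f^*(\XX)\|^2$, since the penalty terms are nonnegative. This yields the sharp constant $1$ directly, with no intermediate $f_\lambda$ needed.

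The remaining care is the cross term. Once noise is added, $\frac1n\EE\|H\YY-f^*(\XX)\|^2$ splits exactly into bias plus variance, because $\EE\epsilon=0$ conditionally. So no cross term appears, and combining the two parts gives \eqref{bd_pred_rate_fix_ridge}. The main obstacle is the eigenvalue comparison in the variance step, namely expressing $\tr(A^\T A)$ through the $\wt\nu_i$ of $Q_\mu\KK Q_\mu$ when $Q_\mu$ is no longer a projection.
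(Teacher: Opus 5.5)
Your bias step is correct, and it is more direct than the paper's. The paper goes through the profiled criterion of \cref{lem_ridge}, an intermediate $f_{\lambda,\mu}$, and the bound $\|Q_\mu\|_{\op}\le1$. You instead note that $\frac1n\|(\II_n-H)f^*(\XX)\|^2$ is the data-fit part of the noiseless joint objective at its minimizer, so it is at most the minimum value.

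The genuine gap is the variance bound for $A=Q_\mu\KK(Q_\mu\KK+\lambda\II_n)^{-1}Q_\mu$. Your primary route cannot work. After you drop the outer factors $S=Q_\mu^{1/2}$, you are left with the eigenvalues $m_i$ of $M=S\KK S$, equivalently of $\KK^{1/2}Q_\mu\KK^{1/2}$. The $\wt\nu_i$, by contrast, are the eigenvalues of $\KK^{1/2}Q_\mu^2\KK^{1/2}\preceq\KK^{1/2}Q_\mu\KK^{1/2}$. So min-max gives $\wt\nu_i\le m_i$, which is the wrong direction. Already for $n=d=1$, with $Q_\mu=q\in(0,1)$ and $\KK=k>0$, one has $m=qk>q^2k=\wt\nu$.

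Your SVD fallback is not enough as stated either. With $W:=Q_\mu\KK^{1/2}$ and $N:=\KK^{1/2}Q_\mu\KK^{1/2}$, push-through gives $A=W(N+\lambda\II_n)^{-1}W^\T$. Since $N\neq W^\T W$ unless $Q_\mu$ is idempotent, the SVD of $W$ does not diagonalize $A$.

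The missing idea is to compare the matrices themselves in Loewner order, rather than intermediate spectra. Because $0\preceq Q_\mu\preceq\II_n$, you have $Q_\mu^2\preceq Q_\mu$, hence $W^\T W\preceq N$. Therefore $(N+\lambda\II_n)^{-1}\preceq(W^\T W+\lambda\II_n)^{-1}$, and
\[
0\preceq A\preceq W(W^\T W+\lambda\II_n)^{-1}W^\T=\wt\VV\,\wt\VV_\lambda^{-1}.
\]
Min-max as in \cref{lem_comp} then yields $\tr(A^\T A)=\tr(A^2)\le\sum_{i}\bigl(\wt\nu_i/(\wt\nu_i+\lambda)\bigr)^2$, and the rest of your argument goes through.

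Your suspicion about this step is well-founded. The paper's \cref{lem_rem_Q_PQ_lbd_ridge} claims the equality $Q_\mu P_{\mu,\lambda}=\wt\VV\wt\VV_\lambda^{-1}$ by ``the same argument'' as \cref{lem_rem_Q_PQ_lbd}, but that argument uses $Q_\XX^2=Q_\XX$. For $\mu>0$ the equality fails: in the scalar example, $q^2k/(qk+\lambda)\neq q^2k/(q^2k+\lambda)$. Only the inequality above is true, and it is also all the paper's proof actually needs.
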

\begin{proof}
    Its proof can be found in \cref{app_pf_thm_ridge}.
\end{proof}

The risk bound in \eqref{bd_pred_rate_fix_ridge} consists of the same components as that in \eqref{bd_pred_rate_fix}.
The differences are the additional ridge-induced bias $\mu\|\alpha\|^2$, the reduced variance of the ridge estimator $\XX \wh \alpha_r$ compared to OLS, and the replacement of the eigenvalues ${\wh \nu_i}$ by ${\wt \nu_i}$.
The benefit of using ridge over OLS is evident for large $d$, as increasing $\mu$ reduces variance by trading off bias in capturing the linear component. Regarding the variance of the nonlinear predictor, since
 the comparison inequality  \eqref{var_comp} also holds when $\wh \nu_i$'s are replaced by $\wt \nu_i$'s by following the same argument, the overall variance of $\wh f_r$ is no greater than that of KRR, up to the term $(\sigma^2/n)\sum_i \wh\tau_i^2/(\wh\tau_i+\mu)^2$, which can be made arbitrarily small by taking $\mu$ sufficiently large. Meanwhile, note that the approximation error and bias of $\wh f_r$ are no greater than, and can be substantially smaller than, those for KRR. Thus, for a suitable choice of $\mu$, the proposed predictor is never worse than KRR across all settings and can yield clear improvements over KRR.

To further quantify the randomness in the variance term in the bound \eqref{bd_pred_rate_fix_ridge}, we consider the random design setting and establish the corresponding prediction risk bound.  
Write $\Sigma = \EE XX^\T \in \RR^{d\times d}$ with its eigenvalues  $\tau_1\ge\tau_2\ge \cdots\ge \tau_d\ge 0$. 
The following theorem states the upper bound of the prediction risk of $\wh f_r$, in terms of both $R(\lambda)$ defined in \eqref{kernel_complexity} and the eigenvalues of $\Sigma$. 

\begin{theorem}\label{bd_pred_rate_random_ridge}
Grant model (\ref{model}) and
Assumptions  \ref{ass_pd_K} -- \ref{ass_mercer}. For any  $\mu, \lambda >0$, one has
\begin{align*}
     \cR( \wh  f_r -f^*)  \le    \inf_{\alpha \in \RR^d, f\in \cH_K}\left\{  \cR\l(\langle \alpha,\rangle + f-f^* \r)+\mu\|\alpha\|^2 +  \lambda \|f\|_K^2  \right\} + \sigma^2  \left( {1 \over n}  \sum_{i=1}^d {\tau_i\over \tau_i + \mu}   + \frac{R(\lambda)^2}{\lambda}\right).
\end{align*}
\end{theorem}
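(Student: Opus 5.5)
The plan is to derive the statement from the fixed-design oracle inequality in \cref{thm_fix_ridge} by taking expectation over $X_1,\ldots,X_n$ i.i.d.\ from $\rho$, and then controlling the two empirical variance sums by population quantities. One point needs care first. The fixed-design bound carries a factor $2$ in front of the variance, while the target bound does not. So I would not use \eqref{bd_pred_rate_fix_ridge} verbatim. Instead I would rerun its proof, which splits the risk into a bias--approximation part and the variance part $\frac{\sigma^2}{n}\|H\|_F^2$, where $H = P_\mu + Q_\mu\KK(Q_\mu\KK+\lambda\II_n)^{-1}Q_\mu$ is the hat matrix from \eqref{fit_ridge}.

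\emph{Bias--approximation term.} For fixed deterministic $\alpha\in\RR^d$ and $f\in\cH_K$, the conditional bound contains the empirical quantity $\frac1n\|\XX\alpha+f(\XX)-f^*(\XX)\|^2+\mu\|\alpha\|^2+\lambda\|f\|_K^2$. Taking expectation of this quantity gives exactly $\cR(\langle\alpha,\rangle+f-f^*)+\mu\|\alpha\|^2+\lambda\|f\|_K^2$. Since $\EE\inf\le\inf\EE$, the infimum in the statement follows.

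\emph{Variance term.} Here the target is the bound
\[
\|H\|_F^2\le \tr(P_\mu)+\sum_{i=1}^n\frac{\wt\nu_i}{\wt\nu_i+\lambda},
\]
which removes the factor $2$ and the squares at once. The idea is to rewrite $H$ as $P_\mu+Q_\mu^{1/2}S\,Q_\mu^{1/2}$, using the push-through identity. Here $S=M(M+\lambda)^{-1}$ with $M=Q_\mu^{1/2}\KK Q_\mu^{1/2}$, and $M$ has the same nonzero eigenvalues as $Q_\mu\KK Q_\mu$. Both $P_\mu$ and $Q_\mu=\II_n-P_\mu$ are symmetric with spectrum in $[0,1]$. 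Hence $H$ is a sum of symmetric positive semi-definite pieces, and I would aim to show that $H\preceq\II_n$. Given $0\preceq H\preceq\II_n$, we get $\tr(H^2)\le\tr(H)=\tr(P_\mu)+\tr(Q_\mu^{1/2}SQ_\mu^{1/2})\le \tr(P_\mu)+\tr(S)$, which is the desired bound. I expect this step to be the main obstacle. The difficulty is that $P_\mu$ and $Q_\mu$ commute, but $S$ need not commute with them, so $H\preceq\II_n$ is not immediate. If this fails, I would go back to the criterion \eqref{def_method_ridge} itself. There $H$ is the smoother of a joint ridge problem: the fit is the ridge solution with feature map $(x,\,K_x)$ and a block-diagonal penalty. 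Writing the combined Gram matrix $\XX\XX^\T/(n\mu)+\KK/\lambda$, this gives $H=G(G+\II_n)^{-1}$ for a positive semi-definite $G$. That representation is symmetric with eigenvalues in $[0,1)$, which gives $0\preceq H\preceq\II_n$ directly.

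\emph{Expectations.} After the variance bound, I would split $\tr(H)$ into its two parts. Note that $\tr(P_\mu)=\tr(\wh\Sigma(\wh\Sigma+\mu\II_d)^{-1})$. The map $A\mapsto\tr(A(A+\mu)^{-1})$ is concave on positive semi-definite matrices, so Jensen gives $\EE\tr(P_\mu)\le\sum_i\tau_i/(\tau_i+\mu)$. For the kernel part, the comparison inequality \eqref{var_comp} extends to $\wt\nu_i$: $Q_\mu\preceq\II_n$, so the min-max characterization gives $\wt\nu_i\le\wh\mu_i$. This yields $\sum_i\wt\nu_i/(\wt\nu_i+\lambda)\le\lambda^{-1}\sum_i\min\{\lambda,\wh\mu_i\}$. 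The same concavity/Jensen bound used in the proof of \cref{thm_random} then gives $\EE\sum_i\min\{\lambda,\wh\mu_i\}\le\sum_j\min\{\lambda,\mu_j\}=nR(\lambda)^2$. Dividing by $n$ and collecting terms gives the claimed bound.
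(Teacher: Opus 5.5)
Your argument is correct, modulo one harmless misstatement noted below. On the variance term it takes a different, and in fact tighter, route than the paper.

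The paper simply feeds \cref{thm_fix_ridge} into the random-design argument. Its variance bound comes from the split $\tr\bigl((P_\mu+Q_\mu P_{\mu,\lambda})^\T(P_\mu+Q_\mu P_{\mu,\lambda})\bigr)\le 2\tr(P_\mu^2)+2\tr\bigl((Q_\mu P_{\mu,\lambda})^2\bigr)$. For $\mu>0$ the cross term need not vanish, since $P_\mu Q_\mu\neq 0$. The paper then bounds $\EE\tr(P_\mu^2)\le\EE\tr(P_\mu)\le\sum_i\tau_i/(\tau_i+\mu)$, and bounds the kernel part by $\EE\tr(\KK\KK_\lambda^{-1})\le nR(\lambda)^2/\lambda$, exactly as you do. Taken literally, that chain yields the statement with $2\sigma^2$ in front of the variance; the factor $2$ is silently dropped.

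Your observation $0\preceq H\preceq\II_n$, hence $\tr(H^2)\le\tr(H)$, absorbs the cross term and delivers the constant as stated. It costs nothing, because the paper discards the squares anyway. It also bypasses the paper's identity $Q_\mu P_{\mu,\lambda}=\wt\VV\wt\VV_\lambda^{-1}$ from \cref{lem_rem_Q_PQ_lbd_ridge}, which is not valid in general once $Q_\mu$ is not a projection. Your bias step and your two Jensen steps coincide with the paper's. Using concavity of $t\mapsto\min\{\lambda,t\}$ in place of $t\mapsto t/(t+\lambda)$ is equally fine.

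Two corrections.

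First, $H\preceq\II_n$ is not an obstacle. Since $S\preceq\II_n$, congruence gives $Q_\mu^{1/2}SQ_\mu^{1/2}\preceq Q_\mu$, so $H\preceq P_\mu+Q_\mu=\II_n$ with no commutation needed. Your fallback $H=G(G+\II_n)^{-1}$ is also right. It even gives the bias bound in one line: $\II_n-H=(G+\II_n)^{-1}$, and the minimal value of the penalized criterion at response $y$ is $n^{-1}y^\T(G+\II_n)^{-1}y\ge n^{-1}\|(G+\II_n)^{-1}y\|^2$.

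Second, for $\mu>0$ the matrix $Q_\mu$ is not idempotent. So $M=Q_\mu^{1/2}\KK Q_\mu^{1/2}$, whose spectrum is that of $\KK^{1/2}Q_\mu\KK^{1/2}$, does not share its eigenvalues with $Q_\mu\KK Q_\mu$, whose spectrum is that of $\KK^{1/2}Q_\mu^2\KK^{1/2}$. For $n=1$, $\KK=k>0$ and $Q_\mu=q\in(0,1)$, these are $qk$ and $q^2k$. Hence $\tr(S)$ is in general larger than $\sum_i\wt\nu_i/(\wt\nu_i+\lambda)$, and your claim that $\tr(P_\mu)+\tr(S)$ ``is the desired bound'' does not follow as written. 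The fix is easy: apply the min-max argument directly to $\KK^{1/2}Q_\mu\KK^{1/2}\preceq\KK$ to get $\lambda_i(M)\le\wh\mu_i$. That is all you use afterwards, so you can drop the $\wt\nu_i$ altogether.
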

\begin{proof}
    Its proof can be found in \cref{app_pf_thm_random_ridge}.
\end{proof}

Comparing to \cref{thm_random}, the risk bound in \cref{bd_pred_rate_random_ridge} additionally depends on  $\Sigma$ through its eigenvalues as well as the regularization parameter $\mu$. 
 To facilitate understanding, suppose there exist optimal $ \alpha_*$ and $f_*$ that attain the infimum of the approximation error and bias terms. Figure \ref{fig_change_mechanism} illustrates the effect of increasing the ridge penalty $\mu$. This may occur, for instance, when the proportion of linear signal in $f^*$ decreases, or when either the eigenvalues of $\Sigma$ or the dimension $d$ increase. As shown in Figure \ref{fig_change_mechanism}, a larger $\mu$ leads to a smaller $\|\alpha_*\|^2$ in order to offset the additional bias induced by ridge regularization. Consequently, a larger portion of the signal in $f^*$ must be captured by the nonlinear predictor. This, in turn, necessitates a smaller choice of $\lambda$, at the expense of a larger variance term $\sigma^2 R(\lambda)^2/\lambda$.
A similar line of reasoning applies to a larger choice of the RKHS penalty $\lambda$, which may arise when the proportion of nonlinear signal decreases or when the complexity of the kernel becomes larger. In summary, the optimal choices of $\mu$ and $\lambda$ tend to move in opposite directions in order to achieve the best decomposition between the linear and nonlinear components. In our numerical studies below, we find that cross-validation yields satisfactory performance.

\begin{figure}[ht]
\centering
\begin{tikzpicture}[
    scale=0.68,
    transform shape,
    box/.style={
        rectangle,
        rounded corners=6pt,
        draw=black,
        line width=0.8pt,
        align=center,
        font=\small
    },
    mainbox/.style={
        box,
        minimum width=2.75cm,
        minimum height=1.15cm
    },
    bluebox/.style={
        box,
        draw=blue!70!black,
        fill=blue!3,
        line width=0.9pt,
        minimum width=3.2cm,
        minimum height=1.45cm
    },
    orangebox/.style={
        box,
        draw=orange!85!black,
        fill=orange!5,
        line width=0.9pt,
        minimum width=3.2cm,
        minimum height=1.45cm
    },
    varbluebox/.style={
        box,
        draw=blue!70!black,
        fill=blue!3,
        line width=0.9pt,
        minimum width=3.75cm,
        minimum height=1.85cm
    },
    varorangebox/.style={
        box,
        draw=orange!85!black,
        fill=orange!5,
        line width=0.9pt,
        minimum width=3.75cm,
        minimum height=1.85cm
    },
    bluearrow/.style={
        -{Stealth[length=2.8mm]},
        blue!75!black,
        line width=0.9pt,
        shorten <=8pt,
        shorten >=8pt
    },
    orangearrow/.style={
        -{Stealth[length=2.8mm]},
        orange!85!black,
        line width=0.9pt,
        shorten <=8pt,
        shorten >=8pt
    }
]

\node[mainbox] (mu) at (-1.2,0)
{Increase $\mu$};

\node[bluebox] (lin) at (3.3,1.35)
{\textbf{Optimal linear}\\[-1mm]
 \textbf{strength}\\[1mm]
 $\|\alpha_* \|^2$\\[1mm]
 {\color{blue!75!black}decreases}};

\node[orangebox] (nonlin) at (8.3,1.35)
{\textbf{Optimal nonlinear}\\[-1mm]
 \textbf{strength}\\[1mm]
 $\|f_* \|_K^2$\\[1mm]
 {\color{orange!85!black}increases}};

\node[bluebox] (lambda) at (13.2,1.35)
{\textbf{Optimal $\lambda$}\\[1mm]
 \\[1mm]
 {\color{blue!75!black}decreases}};

\node[varbluebox] (ridgevar) at (3.3,-1.75)
{\textbf{Variance of linear}\\[-1mm]
 \textbf{ridge estimator}\\[1mm]
 $\displaystyle \frac{\sigma^2}{n}
\sum_{i=1}^d \frac{\tau_i}{\tau_i+\mu}$\\[1mm]
 {\color{blue!75!black}decreases}};

\node[varorangebox] (krrvar) at (13.2,-2.)
{\textbf{Variance of KRR}\\[-1mm]
 \textbf{estimator}\\[1mm]
 $\displaystyle \sigma^2
 \frac{R(\lambda)^2}{\lambda}$\\[1mm]
 {\color{orange!85!black}increases}};

\draw[bluearrow] (mu.east) -- (lin.west);
\draw[bluearrow] (mu.east) -- (ridgevar.west);

\draw[orangearrow] (lin.east) -- (nonlin.west);
\draw[orangearrow] (nonlin.east) -- (lambda.west);

\draw[bluearrow] (lambda.south) -- (krrvar.north);

\end{tikzpicture}
\caption{Effect of a larger $\mu$ on the risk bound in \cref{bd_pred_rate_random_ridge}.}
\label{fig_change_mechanism}
\end{figure}

Finally, under the linear model $f^*(x) = x^\top \alpha^*$, the risk bound in \cref{bd_pred_rate_random_ridge} can be upper bounded by
$
              \mu \|\alpha^*\|^2  + {2\sigma^2 \tr(\Sigma) / (\mu n)}
$
which is a standard risk bound for ridge regression \citep[Remark 15]{hsu2014random}.

\section{Simulation Studies}\label{sec_sim}
In this section,  we extend simulation studies in the Introduction to comprehensively evaluate the empirical performance of the proposed algorithm in \cref{sec_method_krr}.  
For comparison, we consider two competitors: KRR and  OLS. 
 For kernel-based methods, we consider
 two different types of 
 kernel: the spline kernel
$K(x,x')= 1+\sum_{k\in \ZZ\setminus\{0\}}\exp(2\pi i k(x-x'))|k|^{-2q}$   with $q>1/2$ in \cref{sec_spline};   the Gaussian kernel $K(x,x') =\exp(-\|x-x'\| ^2/\gamma)$ with $\gamma>0$ in \cref{sec_gaussian}. 
The tuning parameter $\lambda$ is always selected via cross-validation (CV). For each time, the performance for each predictor is evaluated by the prediction risk computed on an independent test dataset with size $500$. 
Each setting is repeated  100 times, and average prediction risks are reported. In \cref{app_sec_sim_ridge}, we report additional simulation results on the ridge-based predictor in \cref{sec_ridge}.

\subsection{Spline kernel}\label{sec_spline}
As in the Introduction, the model \eqref{model} is set to  
 $f^*(X) =2 X+\alpha \sin (2\pi X)$, where
 $\epsilon\sim N(0,1.5^2)$,  $X \sim \text{Unif}(0,1)$ and $\alpha $ controls the proportion of the nonlinear component. For kernel-based methods, we apply the spline kernel by choosing $q$ in $\{0.7,1,3\}$.  
 We first increase the sample size $n$ to compare the empirical convergence rates of different predictors, with  multiple choices of $\alpha$ in 
 $\{0, 0.5,1,1.5\}$. The obtained results are reported in Figure \ref{simu_results}. Table 
 \ref{tab_slope} summarizes
 the corresponding absolute slopes in Figure \ref{simu_results}.  
 Table \ref{tab_lambda} summarizes
 the CV-selected optimal $\lambda$ with $n=400$.
 \begin{figure}[ht]
\centering
\includegraphics[width=0.85 \textwidth]{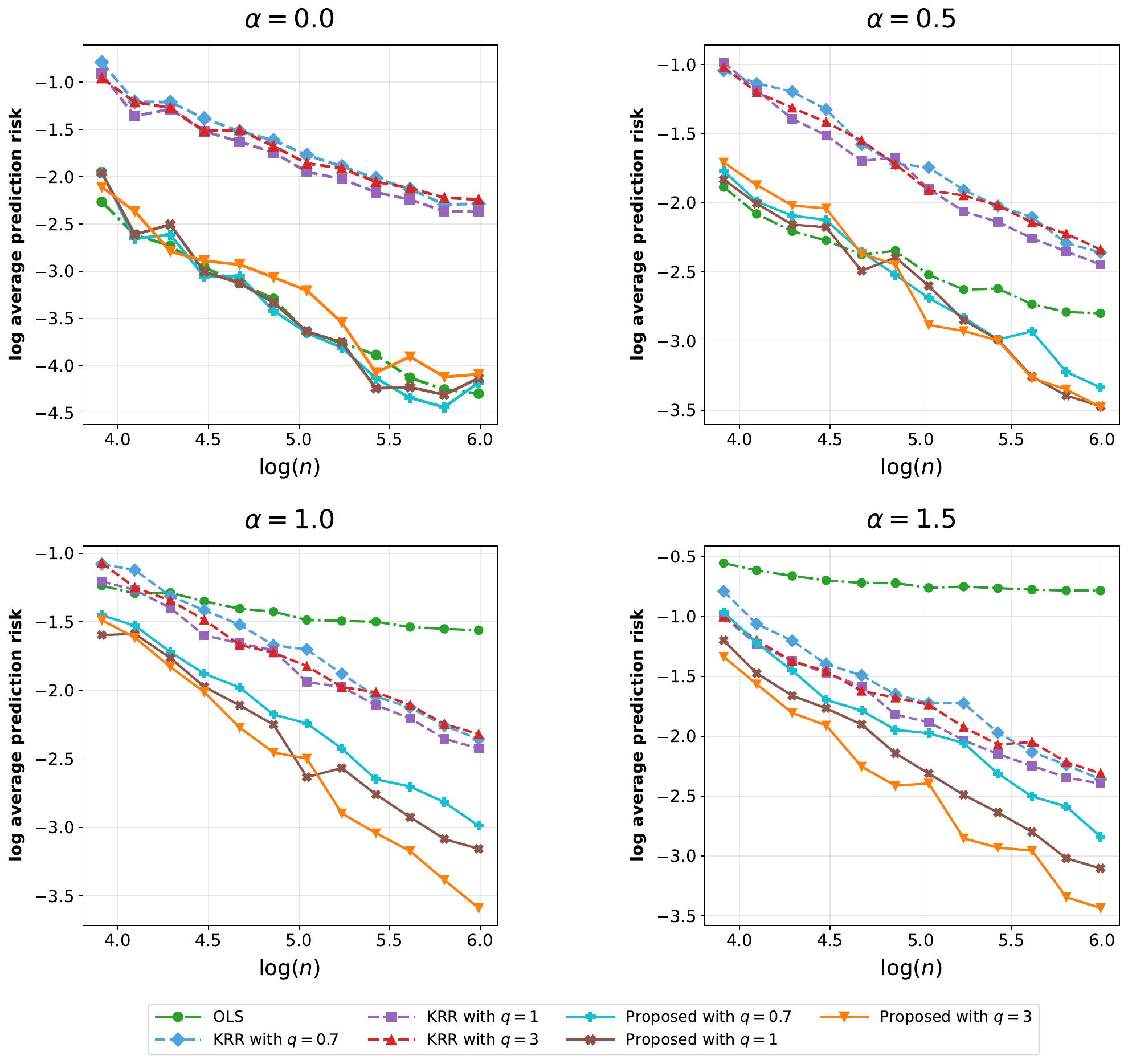}
    \caption{The log average  prediction risk against  $\log n$.}
    \label{simu_results}
\end{figure}

\begin{table}
\centering
\caption{Absolute slopes of the predictors with varying $\alpha$.}
\small
\setlength{\tabcolsep}{7pt}
\begin{tabular}{lccccccc}
\toprule
& OLS & \multicolumn{3}{c}{KRR} & \multicolumn{3}{c}{Proposed} \\
\cmidrule(lr){2-2} \cmidrule(lr){3-5} \cmidrule(lr){6-8}
$q$ in spline kernel & -- & 0.7 & 1 & 3 & 0.7 & 1 & 3 \\
\midrule
$\alpha = 0$ & 0.992 & 0.686 & 0.671 & 0.615 & \textit{1.117} & 1.101 & 0.984 \\
$\alpha = 0.5$ & 0.421 & 0.660 & 0.685 & 0.627 & 0.741 & 0.805 & \textit{0.894} \\
$\alpha = 1$   & 0.162 & 0.632 & 0.602 & 0.586 & 0.751 & 0.821 & \textit{1.026} \\
$\alpha = 1.5$ & 0.098 & 0.702 & 0.670 & 0.597 & 0.817 & 0.908 & \textit{0.999} \\
\bottomrule
\end{tabular}
\label{tab_slope}
\end{table}

\begin{table}
\centering
\caption{$\log \lambda$ of the proposed method and KRR with varying $\alpha$ ($n=400$).}
\label{tab_lambda}
\small
\setlength{\tabcolsep}{7pt}
\begin{tabular}{lcccccc}
\toprule
& \multicolumn{3}{c}{KRR} & \multicolumn{3}{c}{Proposed} \\
\cmidrule(lr){2-4} \cmidrule(lr){5-7}
$q$ in spline kernel & 0.7 & 1 & 3 & 0.7 & 1 & 3 \\
\midrule
$\alpha = 0$   & -2.13 & -2.75 & -5.41 & 3.86 & 3.82 & 3.82 \\
$\alpha = 0.5$ & -2.17 & -2.71 & -4.45 & -0.63 & -0.95 & -1.40 \\
$\alpha = 1$   & -2.11 & -2.51 & -4.86 & -1.33 & -1.71 & -2.54 \\
$\alpha = 1.5$ & -2.15 & -2.79 & -5.78 & -1.93 & -2.18 & -3.18 \\
\bottomrule
\end{tabular}
\end{table}

 As expected, it is observed from Figure \ref{simu_results} and Table 
\ref{tab_slope} that the proposed methods for all values of $q$ outperform both KRR and OLS in almost all settings, aligning with the theoretical analysis in \cref{sec_theory_proposed}.
Comparing the slopes of the $\log$ error declines,
we observe that in the linear setting when $\alpha =0$,  OLS exhibits an almost optimal convergence rate of  $1/n$, whereas all the KRR estimators decay at slower rates. By contrast, our method fully captures the linear signal, and has 
comparable, even slightly better in some cases, predictive performance to OLS.  This observation is aligned with  \cref{rem_comp_ols}.
When $\alpha$  increases, the performance of OLS deteriorates from moderate to severe extent due to under-fitting. KRR, on the other hand, has better performance than OLS in these settings, as it is better able to accommodate the nonlinear structure. Nevertheless, our method still has clear advantages over KRR, especially for relatively small $\alpha$. This is because KRR does not exploit the linear structure, whereas our method adapts to the linear signal, thereby achieving superior prediction performance.

More interestingly,  when $\alpha>0$,  our method achieves increasingly better performance as  $q$  increases, exhibiting a trend opposite to that observed for KRR.  Recall that a larger $q$ corresponds to a more regular RKHS $\cH_K$. 
Specifically, for our method,   decreasing the richness of the RKHS reduces the variance, and this gain outweighs the accompanying increase in bias and approximation error, a benefit from our method incorporating the explicit linear component, as shown in \eqref{eq_bd_illu}.
Conversely, for KRR,  a more regular RKHS  causes a substantially more adverse effect on the bias and approximation error,  as KRR fails to capture the linear signal. 
Regarding the CV-selected optimal $\lambda$ reported in Table \ref{tab_lambda}, we observe that the selected $\lambda$ for our method is generally larger than that for KRR, which corroborates  \cref{rem_lambda}. 
Furthermore, the selected $\lambda$ decreases as $q$ increases in each setting,  except for our method with $\alpha=0$. This is due to the fact that a larger $q$ leads to a reduced complexity of $\cH_K$, so that less regularization is needed to control the variance.
 The exception arises because, in the linear setting, our method does not require additional shrinkage induced by the RKHS penalty.

The second experiment is to illustrate the effect of $\alpha$ on predictive performance, where we fix $n=200$. The results in Figure \ref{fig_vary_alpha_spline} complement those reported in the Introduction. One additional observation is that, as $\alpha$ increases, the proposed method tends to favor a larger choice of $q$.
\begin{figure}[ht]
    \centering

        {\includegraphics[width=0.45\textwidth]{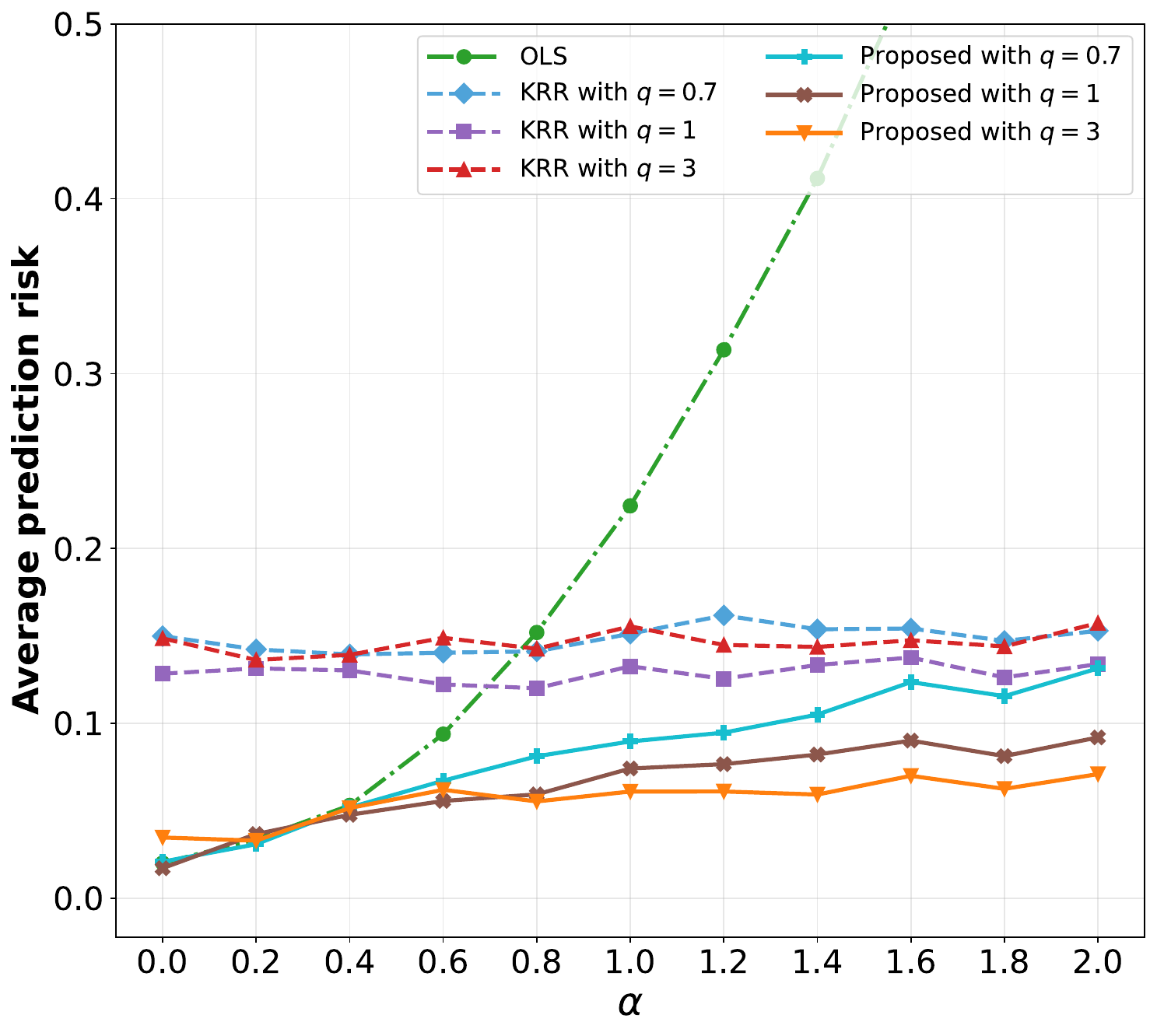}}
              
\caption{
The average prediction risk against $\alpha$. 
}
\label{fig_vary_alpha_spline}
\end{figure}

\subsection{Gaussian kernel}  \label{sec_gaussian}

Recall from the Introduction that for the Gaussian kernel,  the model \eqref{model} is set to    
$f^*(X) = X^\T\beta+ \alpha [\sin (\pi X_1)+ \cos (\pi X_2 X_3)]
$, where   $\beta=(2,-1.5,0.5)^\T$,   $\epsilon\sim N(0,1)$ and the coordinates of $X$ are generated  from $\text{Unif}(-2,2)$ independently.  In this experiment, we examine how the performance of different predictors evolves when varying  $\alpha$.  The sample size is set to $n=400$.

We first fix 
the bandwidth $\gamma$ in  $\{ 0.5, 1, 100\}$. 
A smaller $\gamma$ corresponds to a richer  $\cH_K$.
The results in the left panel of 
Figure \ref{fig_vary_alpha_gaus} completes those reported in the Introduction. 
We observe that the predictive performance of both KRR and our method is affected by  $\gamma$. 
The RKHS $\cH_K$ induced by the Gaussian kernel with  $\gamma=100$  is relatively simple and behaves like a linear function space, so that the performance of both KRR and our method with $\gamma=100$ align closely with that of OLS. However, the RKHS with large $\gamma$ has limited capacity of approximation to nonlinear functions, so that both KRR and our method
have increasingly inferior performance as $\alpha$ grows.  
A small $\gamma$ helps them capture the nonlinear signal, leading to better performance for both. However, KRR with a smaller $\gamma$ is difficult to capture the linear signal, leading to substantially worse performance than OLS when $\alpha$ is relatively small. By contrast, our method still effectively adapts to the linear signal, attributed to the explicit incorporation of a linear predictor.  Comparing the results for $\gamma =0.5$ and $\gamma =1$, the advantage of our method over KRR remains visible even when $\alpha$ is large.

It is clear that the bandwidth of the Gaussian kernel plays a crucial role in prediction.  
In practice, selecting a proper bandwidth is difficult. 
A natural treatment is to tune both $\lambda$ and bandwidth simultaneously, even though it is shown by our experiments that our method is relatively robust to the variations of $\gamma$, except when $\gamma =100$, an extremely large $\gamma$. 
To this end, 
we choose $12$ candidates of 
$\gamma$ values,  equally spaced on the log scale between 0.1 and 150, and select  $\gamma$ jointly with $\lambda$ by CV.  Additionally, we also 
follow the suggestion in \cite{mukherjee2010learning} to select a median-adjusted (Med) $\gamma$, where the used $\gamma$ is  0.2 of the squared median of all pairwise distances among sample points.
The average prediction risks are reported in the right panel of  Figure \ref{fig_vary_alpha_gaus}. KRR with median-adjusted $\gamma$ is clearly inferior to KRR with CV-selected $\gamma$. Even the latter is still inefficient when $\alpha$ is relatively large, since
it does not balance well between capturing the linear and nonlinear signals. More precisely, KRR with a relatively small $\gamma$ is likely to miss much of the linear signal, while   KRR with a relatively large $\gamma$ tends to limit its ability to capture the nonlinear signal. 
In sharp contrast, 
it can be observed that our method with CV-selected $\gamma$ performs best across all values of $\alpha$.  Our method with median-adjusted $\gamma$ also has close performance to that with   CV-selected $\gamma$.
If the computational time is limited, the empirical results recommend using our method with median-adjusted $\gamma$.

 Practically, we have no knowledge in advance whether the linear signal or the nonlinear signal in the underlying regression function is dominant, making it difficult to determine whether to use a linear procedure or the more complex KRR method. The theoretical analysis in \cref{sec_theory_proposed}, together with empirical results, suggests that regardless of the strength of linear and nonlinear signals, 
 the proposed method should always be preferred in low- or moderate-dimensional settings. For high-dimensional settings, a variant stated in 
 \cref{sec_ridge} can be adopted to adapt to the feature dimension. 
\begin{figure}[ht]
    \centering
        {\includegraphics[width=0.395\textwidth]{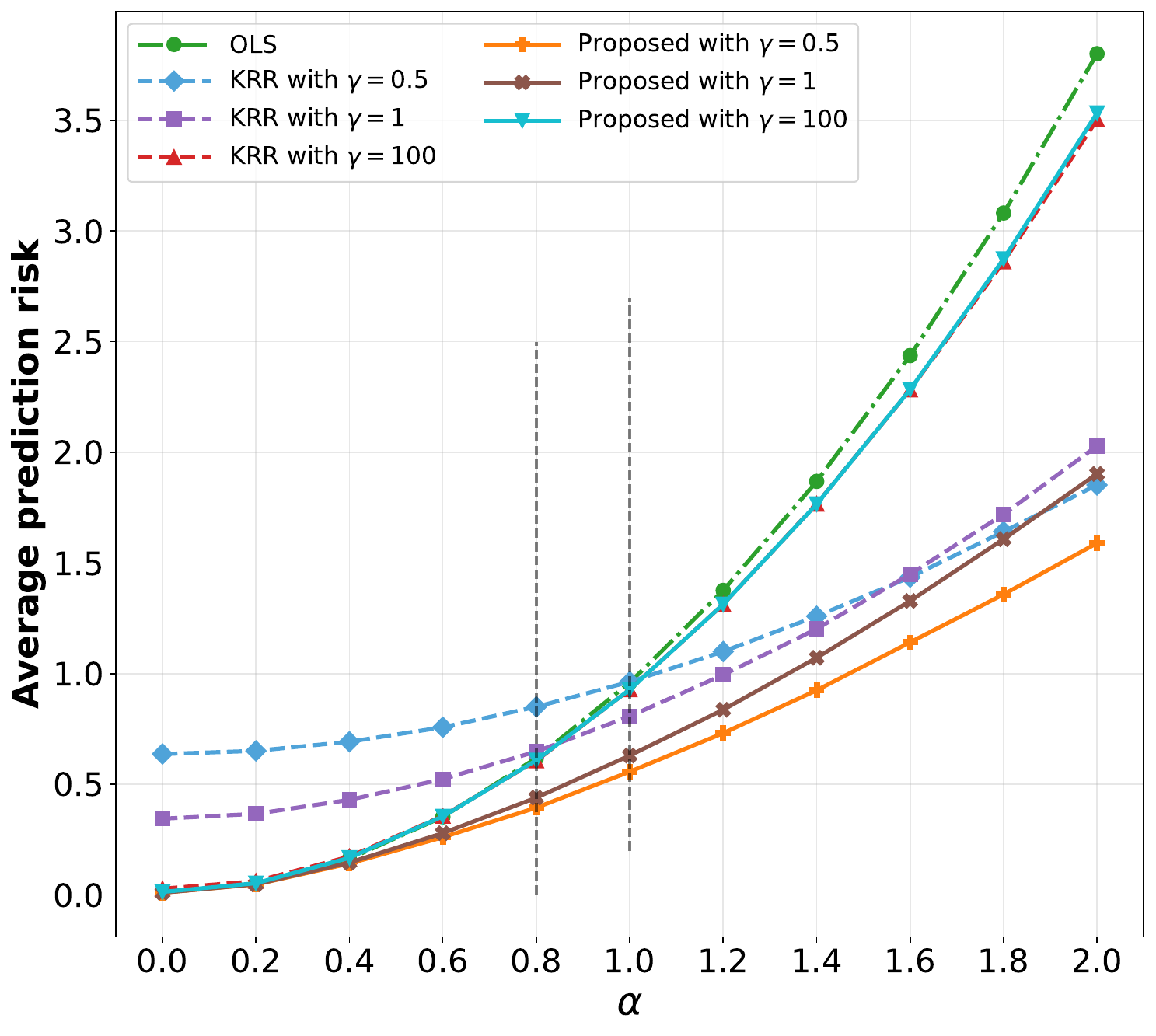}}
  \hspace{5mm}
        {\includegraphics[width=0.395\textwidth]{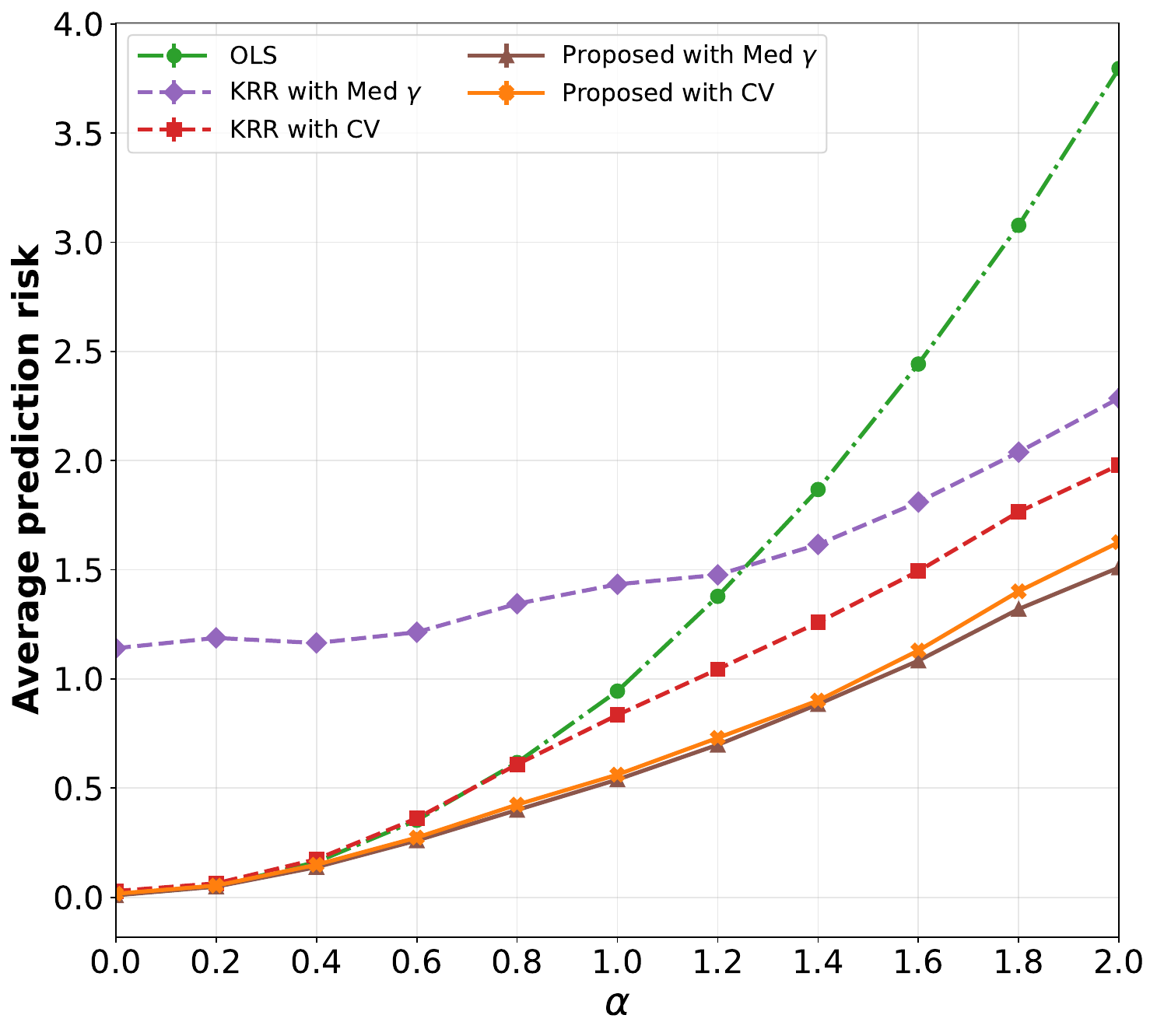}}
\caption{
The average prediction risk against $\alpha$. Left: the Gaussian kernel with 
fixed $\gamma$ in $\{0.5, 1,100\}$. Right: the Gaussian kernel with CV-selected and median-adjusted $\gamma$. 
}
\label{fig_vary_alpha_gaus}
\end{figure}

{
\setlength{\bibsep}{0.85pt}{
    \bibliographystyle{plainnat} 
    \bibliography{ref}  

@article{barron1999risk,
  title={Risk bounds for model selection via penalization},
  author={Barron, Andrew and Birg{\'e}, Lucien and Massart, Pascal},
  journal={Probability Theory and Related Fields},
  volume={113},
  number={3},
  pages={301--413},
  year={1999},
  publisher={Springer}
}

@article{birge2001gaussian,
  title={Gaussian model selection},
  author={Birg{\'e}, Lucien and Massart, Pascal},
  journal={Journal of the European Mathematical Society},
  volume={3},
  number={3},
  pages={203--268},
  year={2001}
}

@article{yang2011parametricness,
  title={Parametric or nonparametric? A parametricness index for model selection},
  author={Liu, Weidong and Yang, Yuhong},
  journal={The Annals of Statistics},
  volume={39},
  number={4},
  pages={2074--2102},
  year={2011},
  publisher={Institute of Mathematical Statistics}
}

@book{Wahba1990,
  author    = {Wahba, Grace},
  title     = {Spline Models for Observational Data},
  publisher = {Society for Industrial and Applied Mathematics},
  address   = {Philadelphia, PA},
  year      = {1990}
}

@article{Aronszajn1950,
  author  = {Nachman Aronszajn},
  title   = {Theory of Reproducing Kernels},
  journal = {Transactions of the American Mathematical Society},
  volume  = {68},
  number  = {3},
  pages   = {337--404},
  year    = {1950},
  doi     = {10.1090/S0002-9947-1950-0051437-7}
}

@article{mourtada2022elementary,
  title={An elementary analysis of ridge regression with random design},
  author={Mourtada, Jaouad and Rosasco, Lorenzo},
  journal={Comptes Rendus. Math{\'e}matique},
  volume={360},
  number={G9},
  pages={1055--1063},
  year={2022}
}

@book{GreenSilverman1994,
  author    = {Green, P. J. and Silverman, B. W.},
  title     = {Nonparametric Regression and Generalized Linear Models: A Roughness Penalty Approach},
  publisher = {Chapman and Hall},
  address   = {London},
  year      = {1994}
}

@book{Wood2017,
  author    = {Wood, Simon N.},
  title     = {Generalized Additive Models: An Introduction with R},
  edition   = {2},
  publisher = {CRC Press},
  address   = {Boca Raton, FL},
  year      = {2017}
}

@inproceedings{zhu2019high,
  title={High dimensional inference in partially linear models},
  author={Zhu, Ying and Yu, Zhuqing and Cheng, Guang},
  booktitle={The 22nd International Conference on Artificial Intelligence and Statistics},
  pages={2760--2769},
  year={2019},
  organization={PMLR}
}

@article{lee2024high,
  title={High-dimensional partial linear model with trend filtering},
  author={Lee, Sang Kyu and Hong, Hyokyoung G and Weng, Haolei and Loftfield, Erikka},
  journal={arXiv preprint arXiv:2410.20319},
  year={2024}
}

@article{yu2019minimax,
  title={Minimax optimal estimation in partially linear additive models under high dimension},
  author={Yu, Zhuqing and Levine, Michael and Cheng, Guang},
  year={2019}
}

@book{HardleLiangGao2000,
  author    = {{H{\"a}rdle}, Wolfgang and Liang, Hua and Gao, Jiti},
  title     = {Partially Linear Models},
  publisher = {Springer},
  address   = {Heidelberg},
  year      = {2000}
}

@article{zhu2017nonasymptotic,
  title={Nonasymptotic analysis of semiparametric regression models with high-dimensional parametric coefficients},
  author={Zhu, Ying},
  journal={The Annals of Statistics},
  pages={2274--2298},
  year={2017},
  publisher={JSTOR}
}

@article{muller2015partial,
  title={The partial linear model in high dimensions},
  author={M{\"u}ller, Patric and Van de Geer, Sara},
  journal={Scandinavian Journal of Statistics},
  volume={42},
  number={2},
  pages={580--608},
  year={2015},
  publisher={Wiley Online Library}
}

@article{smale2007learning,
  title={Learning theory estimates via integral operators and their approximations},
  author={Smale, Steve and Zhou, Ding-Xuan},
  journal={Constructive Approximation},
  volume={26},
  number={2},
  pages={153--172},
  year={2007},
  publisher={Springer}
}

@article{steinwart2012mercer,
  title={Mercer’s theorem on general domains: On the interaction between measures, kernels, and RKHSs},
  author={Steinwart, Ingo and Scovel, Clint},
  journal={Constructive Approximation},
  volume={35},
  number={3},
  pages={363--417},
  year={2012},
  publisher={Springer}
}

@article{bing2025kernel,
  title={Kernel ridge regression with predicted feature inputs and applications to factor-based nonparametric regression},
  author={Bing, Xin and He, Xin and Wang, Chao},
  journal={arXiv preprint arXiv:2505.20022},
  year={2025}
}

@inproceedings{steinwart2009optimal,
  title={Optimal rates for regularized least squares regression},
  author={Steinwart, Ingo and Scovel, Clint and others},
  booktitle={Proceedings of the Annual Conference on Learning Theory, 2009},
  pages={79--93},
  year={2009}
}

@article{caponnetto2007optimal,
  title={Optimal rates for the regularized least-squares algorithm},
  author={Caponnetto, Andrea and De Vito, Ernesto},
  journal={Foundations of Computational Mathematics},
  volume={7},
  pages={331--368},
  year={2007},
  publisher={Springer}
}

@article{mercer1909xvi,
  title={Functions of positive and negative type, and their connection the theory of integral equations},
  author={Mercer, James},
  journal={Philosophical Transactions of the Royal Society of London. Series A, Containing Papers of a Mathematical or Physical Character},
  volume={209},
  number={441-458},
  pages={415--446},
  year={1909},
  publisher={The Royal Society London}
}

@article{hsu2014random,
  title={Random design analysis of ridge regression},
  author={Hsu, Daniel and Kakade, Sham  and Zhang, Tong},
  journal={Foundations of Computational Mathematics},
  volume={14},
  pages={569--600},
  year={2014},
  publisher={Springer}
}

@article{micchelli2006universal,
  title={Universal Kernels.},
  author={Micchelli, Charles A and Xu, Yuesheng and Zhang, Haizhang},
  journal={The Journal of Machine Learning Research},
  volume={7},
  number={12},
  year={2006}
}

@article{zou2006adaptive,
  title={The adaptive lasso and its oracle properties},
  author={Zou, Hui},
  journal={Journal of the American statistical association},
  volume={101},
  number={476},
  pages={1418--1429},
  year={2006},
  publisher={Taylor \& Francis}
}

@article{alaoui2015fast,
  title={Fast randomized kernel ridge regression with statistical guarantees},
  author={Alaoui, Ahmed and Mahoney, Michael W},
  journal={Advances in neural information processing systems},
  volume={28},
  year={2015}
}

@article{tibshirani1996regression,
  title={Regression shrinkage and selection via the lasso},
  author={Tibshirani, Robert},
  journal={Journal of the Royal Statistical Society Series B: Statistical Methodology},
  volume={58},
  number={1},
  pages={267--288},
  year={1996},
  publisher={Oxford University Press}
}

@article{radhakrishnan2024mechanism,
  title={Mechanism for feature learning in neural networks and backpropagation-free machine learning models},
  author={Radhakrishnan, Adityanarayanan and Beaglehole, Daniel and Pandit, Parthe and Belkin, Mikhail},
  journal={Science},
  volume={383},
  number={6690},
  pages={1461--1467},
  year={2024},
  publisher={American Association for the Advancement of Science}
}

@article{singh2024kernel,
  title={Kernel methods for causal functions: dose, heterogeneous and incremental response curves},
  author={Singh, Rahul and Xu, Liyuan and Gretton, Arthur},
  journal={Biometrika},
  volume={111},
  number={2},
  pages={497--516},
  year={2024},
  publisher={Oxford University Press}
}

@inproceedings{an2007face,
  title={Face recognition using kernel ridge regression},
  author={An, Senjian and Liu, Wanquan and Venkatesh, Svetha},
  booktitle={2007 IEEE conference on computer vision and pattern recognition},
  pages={1--7},
  year={2007},
  organization={IEEE}
}

@article{duan2024optimal,
  title={Optimal policy evaluation using kernel-based temporal difference methods},
  author={Duan, Yaqi and Wang, Mengdi and Wainwright, Martin J},
  journal={The Annals of Statistics},
  volume={52},
  number={5},
  pages={1927--1952},
  year={2024},
  publisher={Institute of Mathematical Statistics}
}

@article{bartlett2005local,
  title={Local Rademacher Complexities},
  author={Bartlett, Peter L and Bousquet, Olivier and Mendelson, Shahar},
  journal={The Annals of Statistics},
  volume={33},
  number={4},
  pages={1497--1537},
  year={2005}
}

@article{fischer2020sobolev,
  title={Sobolev norm learning rates for regularized least-squares algorithms},
  author={Fischer, Simon and Steinwart, Ingo},
  journal={The Journal of Machine Learning Research},
  volume={21},
  number={1},
  pages={8464--8501},
  year={2020},
  publisher={JMLRORG}
}

@article{yang2017randomized,
  title={Randomized Sketches For Kernels: Fast And Optimal Nonparametric Regression},
  author={Yang, Yun and Pilanci, Mert and Wainwright, Martin },
  journal={The Annals of Statistics},
  volume={45},
  number={3},
  pages={991--1023},
  year={2017}
}

@book{edmunds1996function,
  title={Function Spaces, Entropy numbers, Differential operators},
  author={Edmunds, David Eric and Triebel, Hans},
  year={1996},
  publisher={Cambridge University Press}
}

@article{mukherjee2010learning,
  title={Learning gradients on manifolds},
  author={Mukherjee, Sayan and Wu, Qiang and Zhou, Ding-Xuan},
  journal={Bernoulli},
  volume={16},
  number={1},
  pages={181--207},
  year={2010}
}

@inproceedings{wahba1990spline,
  title={Spline models for observational data},
  author={Wahba, Grace},
  booktitle={CBMS-NSF Regional Conference Series in Applied Mathematics},
  year={1990},
  publisher={SIAM}
}

@article{kimeldorf1971some,
  title={Some results on {T}chebycheffian spline functions},
  author={Kimeldorf, George and Wahba, Grace},
  journal={Journal of Mathematical Analysis and Applications},
  volume={33},
  number={1},
  pages={82--95},
  year={1971},
  publisher={Elsevier}
}

@book{tsybakov2009introduction,
  title={Introduction to Nonparametric Estimation},
  author={Tsybakov, Alexandre B.},
  year={2009},
  publisher={Springer}
}

@article{rahimi2007random,
  title={Random features for large-scale kernel machines},
  author={Rahimi, Ali and Recht, Benjamin},
  journal={Advances in Neural Information Processing Systems},
  volume={20},
  year={2007}
}

@article{rudi2017generalization,
  title={Generalization properties of learning with random features},
  author={Rudi, Alessandro and Rosasco, Lorenzo},
  journal={Advances in Neural Information Processing Systems},
  volume={30},
  year={2017}
}

@book{steinwart2008support,
  title={Support {V}ector {M}achines},
  author={Steinwart, Ingo and Christmann, Andreas},
  year={2008},
  publisher={Springer Science \& Business Media}
}
}
}

\appendix

\newpage

\section*{Appendix}
 
The proofs are collected in \cref{app_sec_proof_LKRR,app_pf_ridge,app_comp_two}. \cref{app_comp_two} focuses on the comparison between the proposed procedure and the naive two-step procedure.   
\cref{app_sec_sim_ridge} contains additional simulation results of the proposed procedure in \cref{sec_ridge} using the ridge penalty.\\

\noindent{\it Notation.} For any $a,b\in \RR$, we write $a \wedge b= \min\{a,b\}$. Write $\cE =(\epsilon_1,\cdots,\epsilon_n)^\T\in \RR^n $.
Recall that for any function $f: \cX \to \RR$, write $f(\XX) =( f(X_1),\cdots,f(X_n))^\T \in \RR^n $.
 We use $I$  to represent the identity operator, with its domain depending on the context.  For any two positive semi-definite matrices $\AA,  \BB \in \RR^{n\times n}$, we write $\AA  \preceq  \BB$ 
if $v ^\T (\AA -\BB)v\le 0$ for all $v\in \RR^n$.  For any $\lambda \ge 0$, we write $\AA_\lambda := \AA + \lambda \II_n$.
For any matrix $\AA$, we use $\|\AA\|_\op$ to denote its operator norm.  Notation $\tr(\cdot)$ is used to denote the trace of any trace operator. 

\section{Main proofs for \cref{sec_theory_proposed}}\label{app_sec_proof_LKRR}

\subsection{Proof of the closed-form expression in \texorpdfstring{\cref{coeff}}{\texttwoinferior}}\label{app_pf_sec_2}
\begin{proof}[Proof of  \eqref{coeff}] 
Pick any $\lambda >0$.
Recall that
    \[
        \wh \omega =  \argmin_{\omega \in \RR^n }\l\{
            {1\over n}\l \|    Q_{\XX} (\YY - \sqrt{n} ~ \KK w ) \r \|^2 + \lambda \omega^\T \KK \omega
        \r\}.
    \]
    We claim that 
    $Q_{\XX} \KK+ \lambda \II_n$ is invertible. If this is the case, then  
since $\wh\omega$ satisfies 
    \[
        (\KK Q_{\XX} \KK + \lambda \KK) \wh\omega = {1\over \sqrt{n}} \KK Q_{\XX} \YY ,
    \]
 one solution of  $\wh\omega$ is 
\[
\wh \omega =  \l( Q_{\XX} \KK+ \lambda \II_n\r)^{-1} Q_{\XX} \YY.
\]
It remains to prove that  $Q_{\XX} \KK+ \lambda \II_n$ is invertible. To this end, suppose that there exists a $v\in \RR^n$ such that
$
( Q_{\XX} \KK + \lambda \II_n ) v  = 0 .
$
Then we have 
\[
\lambda Q_{\XX} v ~ = ~ -   Q_{\XX}^2 \KK  v ~ = ~  -   Q_{\XX} \KK  v ~ = ~ \lambda v 
\]
which, by $\lambda >0$, implies $Q_{\XX} v = v$. As a result, we have 
\[
 0= v^\T  Q_{\XX} \KK v+ \lambda v^\T v   
=  v^\T (\KK + \lambda \II_n ) v.
\]
Since $\KK$ is positive semi-definite under \cref{ass_pd_K}, we conclude  $v=0$ hence that  $Q_{\XX} \KK+ \lambda \II_n$ is  invertible.  This completes the proof.
\end{proof}

\subsection{Proof of \texorpdfstring{\cref{thm_fix}}{\texttwoinferior}}\label{app_proof_thm_fix}
For notational simplicity, write the eigen-decomposition of $\VV$ as 
\begin{equation}\label{V_eigen}
    \VV := Q_{\XX}\KK Q_{\XX} = \sum_{j=1}^n   \wh \nu_j  \wh v_j  \wh v_j^\T
\end{equation} 
with $\wh v_1,\ldots,\wh v_n \in \RR^n$ being the eigenvectors. Recall that   $\VV_\lambda = \VV + \lambda \II_n.$ 
For any $\lambda >0$, write
\begin{align} \label{def_P_lambda}
    P_\lambda = \KK  ( Q_{\XX} \KK +\lambda \II_n )^{-1}  Q_{\XX}, \qquad Q_{\lambda} =\II_n -P_\lambda.
\end{align}
The following lemma gives useful expressions of $Q_{\XX} P_\lambda$ and $Q_{\XX} Q_{\lambda}$.  

\begin{lemma}\label{lem_rem_Q_PQ_lbd}
    Let  $P_\lambda$ and $Q_{\lambda}$ be defined in \eqref{def_P_lambda}. One has 
    \begin{equation}\label{eq_QX_Plbd}
        \begin{aligned}
             Q_{\XX} P_\lambda &= Q_{\XX} \KK^{1/2} (\KK^{1/2} Q_{\XX} \KK^{1/2} +\lambda \II_n )^{-1}\KK^{1/2}Q_{\XX}\\
            &= Q_{\XX} \KK Q_{\XX}  (Q_{\XX} \KK Q_{\XX}+\lambda \II_n )^{-1}\\
            &= \VV \VV_\lambda^{-1}
        \end{aligned}
    \end{equation}
    and 
    \begin{align} \label{eq_QX_Qlbd}
        Q_{\XX} Q_{\lambda} &= \lambda  Q_{\XX}   (  Q_{\XX}  \KK Q_{\XX}  +\lambda \II_n)^{-1}   Q_{\XX} = \lambda Q_{\XX}  \VV_\lambda^{-1} Q_{\XX}.
    \end{align}
    Immediately, we have $\| Q_{\XX} P_\lambda \|_\op\le 1 $ and $\| Q_{\XX} Q_{\lambda} \|_\op\le 1$.
\end{lemma}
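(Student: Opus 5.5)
The approach is pure linear algebra: push-through identities for the resolvent, plus the idempotency of $Q_{\XX}$. Throughout, write $A := \KK^{1/2}$, which is well defined and positive semi-definite by \cref{ass_pd_K}, and $Q := Q_{\XX}$, which is symmetric with $Q^2 = Q$. Invertibility of $Q\KK + \lambda \II_n$ was already shown in the proof of \eqref{coeff}. The matrices $A Q A + \lambda \II_n$ and $\VV_\lambda = Q\KK Q + \lambda \II_n$ are invertible because they are symmetric positive semi-definite plus $\lambda \II_n$.

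\textbf{First expression in \eqref{eq_QX_Plbd}.} I start from the push-through identity
\[
A\,(QA\,A + \lambda \II_n)^{-1} = (A Q A + \lambda \II_n)^{-1} A,
\]
which follows from $A(QAA + \lambda \II_n) = (AQA + \lambda \II_n)A$. Since $\KK = AA$ and $Q\KK + \lambda \II_n = QAA + \lambda \II_n$,
\[
Q P_\lambda = Q A\, A (Q A A + \lambda \II_n)^{-1} Q = Q A (AQA + \lambda \II_n)^{-1} A Q .
\]

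\textbf{Second expression in \eqref{eq_QX_Plbd}.} I write $AQA = (AQ)(QA)$ and apply push-through again with $B := QA$, using $B^\T(BB^\T + \lambda \II_n)^{-1} = (B^\T B + \lambda \II_n)^{-1}B^\T$ in the transposed form $B (B^\T B + \lambda \II_n)^{-1} = (BB^\T + \lambda \II_n)^{-1} B$. Here $B^\T B = AQA$ and $BB^\T = Q\KK Q = \VV$. This gives
\[
QA(AQA + \lambda \II_n)^{-1} A Q = (\VV + \lambda \II_n)^{-1} QAAQ = \VV_\lambda^{-1}\VV = \VV\VV_\lambda^{-1},
\]
where the last equality holds because $\VV$ and $\VV_\lambda^{-1}$ commute.

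\textbf{Identity \eqref{eq_QX_Qlbd}.} Note $QQ_\lambda = Q - QP_\lambda = Q - \VV\VV_\lambda^{-1}$. Since $\VV = Q\VV = \VV Q$, the matrix $\VV_\lambda^{-1}$ commutes with $Q$. Therefore
\[
Q - \VV\VV_\lambda^{-1} = Q(\VV_\lambda - \VV)\VV_\lambda^{-1} = \lambda Q \VV_\lambda^{-1} = \lambda Q\VV_\lambda^{-1} Q .
\]

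\textbf{Operator-norm bounds.} Both $\VV\VV_\lambda^{-1}$ and $\lambda\VV_\lambda^{-1}$ are diagonal in the eigenbasis $\{\wh v_j\}$ from \eqref{V_eigen}. Their eigenvalues are $\wh\nu_j/(\wh\nu_j+\lambda)$ and $\lambda/(\wh\nu_j + \lambda)$, both in $[0,1]$. This gives $\|QP_\lambda\|_\op \le 1$. For the second, $\|\lambda Q\VV_\lambda^{-1}Q\|_\op \le \|Q\|_\op^2\,\|\lambda \VV_\lambda^{-1}\|_\op \le 1$, so $\|QQ_\lambda\|_\op \le 1$.

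The only delicate point is applying the push-through identities in the correct orientation, with non-symmetric factors like $QA$. Beyond that, everything is routine.
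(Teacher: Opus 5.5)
Your proof is correct and follows essentially the paper's linear-algebra route. Where the paper applies the Woodbury identity to $(Q_{\XX}\KK+\lambda\II_n)^{-1}$ and then left-multiplies by $\KK$, you reach the same identity $\KK(Q_{\XX}\KK+\lambda\II_n)^{-1}=\KK^{1/2}(\KK^{1/2}Q_{\XX}\KK^{1/2}+\lambda\II_n)^{-1}\KK^{1/2}$ in one line via the push-through identity; your factorization $B=Q_{\XX}\KK^{1/2}$ spells out the second equality that the paper dismisses as ``repeating the same arguments,'' and your treatment of \eqref{eq_QX_Qlbd} (idempotency plus commutation of $Q_{\XX}$ with $\VV$) and of the operator-norm bounds matches the paper's.
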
 
\begin{proof} 
The fact that both operator norms are bounded by one is trivial. 

To prove \eqref{eq_QX_Plbd}, applying the  Woodbury matrix identity gives
\[
( Q_{\XX} \KK +\lambda \II_n )^{-1}=\lambda^{-1}\II_n -\lambda^{-1} Q_{\XX}\KK^{1/2}(\KK^{1/2} Q_{\XX} \KK^{1/2} +\lambda \II_n )^{-1}\KK^{1/2},
\]
so that
\begin{align*}
   \KK ( Q_{\XX} \KK +\lambda \II_n )^{-1} ~=~&\lambda^{-1}\KK^{1/2}\l(\II_n - \KK^{1/2} Q_{\XX}\KK^{1/2}(\KK^{1/2} Q_{\XX}\KK^{1/2} +\lambda \II_n )^{-1}\r)\KK^{1/2}\\
  ~=~ &\KK^{1/2} (\KK^{1/2} Q_{\XX} \KK^{1/2} +\lambda \II_n )^{-1}\KK^{1/2}.
\end{align*}
By noting that
\[
  Q_{\XX} P_\lambda =   Q_{\XX} \KK ( Q_{\XX} \KK +\lambda \II_n )^{-1} Q_{\XX},
\]
we conclude the first equality in \eqref{eq_QX_Plbd}.
The second equality in \eqref{eq_QX_Plbd} follows by repeating the same arguments.

Equation \eqref{eq_QX_Qlbd} follows by noting that
    \begin{align*}
    Q_{\XX}   Q_{\lambda} 
    & = Q_{\XX}  ( \II_n -   \KK  (  Q_{\XX}  \KK +\lambda \II_n)^{-1}  Q_{\XX}  ) &&\text{by \eqref{def_P_lambda}}\\
  &  = Q_{\XX}  \b( \II_n - Q_{\XX}  \KK  (  Q_{\XX}  \KK +\lambda \II_n)^{-1}  Q_{\XX}  \b)Q_{\XX} &&\text{since $Q_{\XX} $ is projection}\\
    &  = Q_{\XX}  \b(\II_n - Q_{\XX}  \KK  Q_{\XX}   ( Q_{\XX}  \KK  Q_{\XX}   + \lambda \II_n)^{-1}  \b) Q_{\XX}  &&\text{by \eqref{eq_QX_Plbd}} \\
   & =\lambda ~ Q_{\XX}   (  Q_{\XX}  \KK Q_{\XX}  +\lambda \II_n)^{-1}   Q_{\XX}.
\end{align*}  
The proof is complete.
\end{proof}
 
\medskip

\begin{proof}[Proof of \cref{thm_fix}]
Using \eqref{fit} and \eqref{def_P_lambda} gives 
\begin{align} \label{eq_decom_f_hat} \nonumber
        \wh f(\XX) - f^*  (\XX )     
 & ~ =~      P_{\XX}\YY  + Q_{\XX}P_{\lambda} \YY - f^*  (\XX)\\  \nonumber
& ~ =~     P_ \XX  \cE  +  Q_{\XX}P_{\lambda}  \cE +   Q_{\XX} P_{\lambda}  f^*(\XX) - Q_{\XX}f^*(\XX) &&\text{by $\YY = f^*(\XX) + \cE$ }\\
&~ = ~ P_ \XX  \cE  +  Q_{\XX}P_{\lambda} \cE -  Q_{\XX} Q_{\lambda} f^*(\XX).
\end{align} 
It then follows from \eqref{eq_QX_Qlbd} that 
\begin{align}\label{eq_decom_1}
 \cR(\wh f) &= {1\over n} \EE\left\|
  P_ \XX  \cE     +  Q_{\XX} P_\lambda  \cE 
 \right\|^2 + {1\over n} \left\|
\lambda Q_{\XX}  \VV_\lambda^{-1} Q_{\XX} f^*(\XX)
 \right\|^2 =: \rI + \rII.  
\end{align} 
Using  $\rank(\XX) = d$ gives 
\begin{align*}
    \rI  &=    {\sigma^2 \over n}  \tr\l( 
  (P_ \XX     +  Q_{\XX} P_\lambda)^\T  (P_ \XX     +  Q_{\XX} P_\lambda)\r)\\ & =   {\sigma^2  \over n} \left\{ d +    \tr\l(P_\lambda^\T Q_{\XX}  P_\lambda\r)\right\}\\
  &=  {\sigma^2  \over n} \left\{ d +   \tr\l(\VV^2(\VV+\lambda \II_n)^{-2}\r)\right\} &&\text{by \eqref{eq_QX_Plbd}}.
\end{align*}  
The eigen-decomposition of $\VV$ gives the variance bound.

To bound $\rII$,  let
    \begin{equation}\label{def_f_lambda}
        f_\lambda =  \argmin_{f\in \cH_K}\l\{
            {1\over n}\|Q_{\XX}  f(\XX) - Q_{\XX}  f^*(\XX)  \|^2 + \lambda \|f\|_K^2
        \r\}.
    \end{equation}
    We claim that  
  \begin{equation}\label{target_Qf}
        Q_{\XX} f_\lambda(\XX) = Q_{\XX}  P_\lambda  f^*(\XX)  =  Q_{\XX} \VV\VV_\lambda^{-1}  Q_{\XX} f^*(\XX).
    \end{equation}
    If this is the case, then  
    \begin{align*}
 \rII=        {1\over n}\| \lambda Q_{\XX}  \VV_{\lambda}^{-1} Q_{\XX} f^*  (\XX)\|^2  &=  {1\over n}\|Q_{\XX} f_\lambda(\XX) - Q_{\XX}  f^*(\XX)\|^2 \\
        &\le   \inf_{f\in \cH_K}\l\{
            {1\over n}\|Q_{\XX} f(\XX) - Q_{\XX}  f^*(\XX)\|^2 + \lambda \|f\|_K^2
        \r\}\\ 
        &= \inf_{\alpha\in \RR^d, f\in \cH_K}\l\{
            {1\over n}\| f(\XX) +\XX\alpha -  f^*(\XX) \|^2 + \lambda \|f\|_K^2
        \r\}
    \end{align*} 
    yielding the desired claim. 
    
    It remains to show \eqref{target_Qf}. 
The first equality of \eqref{target_Qf} follows from the same argument of  deducing the fit $\wh f(\XX)$ in \cref{sec_method_krr},   with $f^*(\XX)$ in place of $\YY$.
    The second equality follows by 
    \[
     Q_{\XX}  P_\lambda  f^*(\XX) \overset{\eqref{def_P_lambda}}{=} Q_{\XX}  (Q_{\XX}  P_\lambda)  Q_{\XX} f^*(\XX) \overset{\eqref{eq_QX_Plbd}}{=} Q_{\XX} \VV\VV_\lambda^{-1}  Q_{\XX} f^*(\XX).
    \]
This completes the proof. 
\end{proof}

\subsection{Proof of \texorpdfstring{\cref{thm_krr_fix}}{\texttwoinferior}}\label{app_proof_thm_krr_fix}
\begin{proof}
    Pick any $\lambda >0$ and recall that $\KK_{\lambda} := \KK + \lambda  \II_n$.
        Observe that
        \begin{align*}
            f^*  (\XX) -    \wt g (\XX) 
           ~ =~  
        f^*  (\XX)  -   \KK \KK_{\lambda}^{-1} (f^*  (\XX) + \cE  ) 
           ~ =~     \lambda  \KK_{\lambda}^{-1}  f^*  (\XX) - \KK\KK_{\lambda}^{-1} \cE.
            \end{align*}
     By the  same arguments of proving \cref{thm_fix}, one has 
    \begin{align*}
        \cR(\wt g) = {1\over n}\|\lambda  \KK_{\lambda}^{-1}  f^*  (\XX)\|^2 + {\sigma^2 \over n} \tr\l(\KK^2\KK_{\lambda}^{-2}\r).
    \end{align*} 
    Using the eigen-decomposition of $\KK$ gives the variance bound. 
    To bound the first term, let
    \[
        f_\lambda =  \argmin_{f\in \cH_K}\l\{
            {1\over n}\|f(\XX) - f^*(\XX)\|^2 + \lambda \|f\|_K^2
        \r\}
    \]
    so that, by the representer theorem, $f_\lambda(\XX) = \KK \KK_{\lambda}^{-1} f^*(\XX)$, hence 
    \begin{align*}
        {1\over n}\|\lambda  \KK_{\lambda}^{-1}  f^*  (\XX)\|^2  =  {1\over n}\|f_\lambda(\XX) - f^*(\XX)\|^2 \le   \inf_{f\in \cH_K}\l\{
            {1\over n}\|f(\XX) - f^*(\XX)\|^2 + \lambda \|f\|_K^2
        \r\}.
    \end{align*}
     The proof is complete.
\end{proof}

\subsection{Proof of \cref{var_comp}}\label{app_proof_lemma_thm_fix}

The following lemma characterizes the relationship between the eigenvalues of $\KK$ and $Q_{\XX} \KK Q_{\XX}$.   

\begin{lemma}\label{lem_comp}
For any $\lambda>0$, one has 
\[
  \sum_{i=1}^n  \left(\wh\nu_i \over \wh\nu_i + \lambda\right)^2~ \le  ~  \sum_{i=1}^n  \left(\wh\mu_i \over \wh\mu_i + \lambda \right)^2 
\]
\end{lemma}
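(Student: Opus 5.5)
The plan is to compare the eigenvalues of $\VV = Q_{\XX}\KK Q_{\XX}$ and $\KK$ one by one, and then use monotonicity of the map $t\mapsto (t/(t+\lambda))^2$. Concretely, we will show $\wh\nu_i \le \wh\mu_i$ for every $i\in[n]$. Since $h(t) := t^2/(t+\lambda)^2$ is non-decreasing on $[0,\infty)$ for fixed $\lambda>0$ (its derivative is $2\lambda t/(t+\lambda)^3\ge 0$), we then get $h(\wh\nu_i)\le h(\wh\mu_i)$ termwise. Summing over $i$ gives the claim.

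For the eigenvalue comparison we use the Courant--Fischer min-max characterization. Fix $i$. We write
\[
\wh\nu_i = \min_{\dim S = n-i+1}\ \max_{v\in S,\ \|v\|=1} v^\T Q_{\XX}\KK Q_{\XX} v .
\]
Take $S$ to be the subspace attaining the min-max for $\KK$, namely the span of the eigenvectors of $\KK$ for $\wh\mu_i,\ldots,\wh\mu_n$. For any unit $v\in S$, set $u = Q_{\XX}v$, so that $\|u\|\le 1$. Then $v^\T Q_{\XX}\KK Q_{\XX}v = u^\T\KK u$, but $u$ need not lie in $S$, so this direct route does not immediately work.

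We therefore use the cleaner operator-level argument instead. The matrices $Q_{\XX}\KK Q_{\XX} = (\KK^{1/2}Q_{\XX})^\T(\KK^{1/2}Q_{\XX})$ and $\KK^{1/2}Q_{\XX}\KK^{1/2}$ share the same nonzero eigenvalues. Moreover $\KK^{1/2}Q_{\XX}\KK^{1/2}\preceq \KK^{1/2}\II_n\KK^{1/2}=\KK$, because $Q_{\XX}\preceq \II_n$ and congruence preserves the Loewner order. By Weyl's monotonicity theorem (a direct consequence of Courant--Fischer), $A\preceq B$ implies $\lambda_i(A)\le\lambda_i(B)$ for all $i$. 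Hence the ordered eigenvalues of $\KK^{1/2}Q_{\XX}\KK^{1/2}$, which are exactly $\wh\nu_1\ge\cdots\ge\wh\nu_n$ after padding with zeros, satisfy $\wh\nu_i\le\wh\mu_i$.

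The only delicate step is the identification of the spectra of $\VV$ and $\KK^{1/2}Q_{\XX}\KK^{1/2}$ including multiplicities. This holds because both are $n\times n$ matrices of the form $A^\T A$ and $AA^\T$ with $A=\KK^{1/2}Q_{\XX}$, so they have the same nonzero eigenvalues with multiplicities, and both have $n$ eigenvalues in total. The rest is routine.
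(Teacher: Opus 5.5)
Your proof is correct and follows essentially the same route as the paper: you identify the spectrum of $Q_{\XX}\KK Q_{\XX}$ with that of $\KK^{1/2}Q_{\XX}\KK^{1/2}\preceq\KK$, obtain $\wh\nu_i\le\wh\mu_i$ from Courant--Fischer monotonicity, and conclude termwise because $t\mapsto t^2/(t+\lambda)^2$ is non-decreasing on $[0,\infty)$. The abandoned direct min-max attempt in your second paragraph can simply be deleted.
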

\begin{proof}
Start by noting that $\KK^{1/2} Q_{\XX} \KK^{1/2} $  has the same eigenvalues as $ Q_{\XX} \KK Q_{\XX} $. 
Since 
$\KK^{1/2} Q_{\XX} \KK^{1/2} ~\preceq~  \KK $, applying the 
minimax theorem 
gives that for each $i\in [n]$, 
\begin{align*}
    \wh\nu_i   ~=~ \min_{\dim(\SS) = n-i+1}~  \max _{x\in \SS,\|x\|=1} \l\langle x , \KK^{1/2} Q_{\XX} \KK^{1/2}   x\r \rangle  ~  \le ~
\min_{\dim(\SS) = n-i+1}~  \max _{x\in S,\|x\|=1} \l\langle x , \KK    x\r \rangle =\wh\mu_i .  
\end{align*}
The proof is completed by noting that $f(t) = t^2 / (t+\lambda)^2$ is non-decreasing on $[0,\infty)$. 
\end{proof}

\subsection{Proof of \texorpdfstring{\cref{thm_random}}{\texttwoinferior}}\label{app_proof_thm_random}

\begin{proof} 
    In view of \cref{thm_fix} and \eqref{V_eigen}, it remains to bound from above  
    \[
        {1\over n}  \EE \tr\l(\VV^2 \VV_\lambda^{-2}\r).
    \]
  To this end,    by  \cref{lem_comp}, we observe that  
    \begin{equation}\label{eq_tr_bd} 
 \tr\l(\VV^2 \VV_\lambda^{-2}\r)
    ~\le  ~    \sum_{k=1}^n \frac{\wh \mu_k^2 }{(\wh \mu_k+\lambda)^2} 
   ~\le~  \sum_{k=1}^n \frac{\wh \mu_k}{\wh \mu_k+\lambda}~ =~\tr( \KK \KK_{\lambda}^{-1}). 
        \end{equation}
Define the empirical covariance operator $\wh T_K:\cH_K \to\cH_K$  as
\[
    \wh T_K f  = \frac{1}{n}\sum_{i=1}^n K(X_i, \cdot) f(X_i) ,\qquad \forall ~f\in \cH_K. 
\]
Referring to \cite{bartlett2005local} and Lemma 8 in \cite{bing2025kernel}\footnote{The number of non-zero eigenvalues of $\wh T_K$ is at most $n$. Moreover, the eigenvalues of  $\KK$ are the same as the $n$ largest eigenvalues of $\wh T_K$.}, we have 
\[
\tr( \KK \KK_{\lambda}^{-1}) ~ =~   \tr\l( \EE[\wh T_K]   (\EE [\wh T_K]  +\lambda I)^{-1} \r) .
\]
Note that the population-level counterpart of $\wh T_K$ is $T_K:\cH_K \to\cH_K $, given by 
$$
    T_K f  = \int_{\cX} f(x)K(x,\cdot) \d \rho (x),\qquad  \forall ~f\in \cH_K. 
$$
We have $\EE[ \wh T_K  ] = T_K$.
Moreover, the covariance operator $T_K$
 shares the same eigenvalues as the integral operator $L_K$ in \eqref{def_L_K} \citep{caponnetto2007optimal}.
Therefore, together with \eqref{eq_tr_bd} and
applying Jensen's inequality, by noting that the function $t\mapsto t/(t+\lambda)$ is concave on $[0,\infty)$,
gives
\begin{align*}
    \EE\l[\tr\l( \VV^2 \VV_\lambda^{-2}\r) \r]& ~\le ~  \tr\l( \EE\l[ \wh T_K  (\wh T_K  +\lambda I)^{-1}\r] \r)\\
    & ~\le ~        \tr\l( \EE[\wh T_K]   (\EE [\wh T_K]  +\lambda I)^{-1} \r)  \\
    &  ~= ~      \tr\l(  T_K   (T_K +\lambda I)^{-1} \r)   ~= ~   \sum_{k=1}^\i \frac{\mu_k}{\mu_k+\lambda}. 
\end{align*}
For any positive values $a$ and $b$, we have
\[
    \frac{ab}{a+b}  \le   \frac{1}{\max\{1/a,1/b\}} = a \wedge b,
\]
from which we  further have
\begin{align} \label{eq_kernel_comple}
  \EE\l[\tr\l( \VV^2  \VV_\lambda^{-2}\r) \r] 
      ~  \le~  \lambda^{-1} \sum_{k=1}^\i \min\{\lambda, \mu_k\} ~  \overset{\eqref{kernel_complexity}}{=}~  n \lambda ^{-1} R(\lambda)^2.
\end{align} 
The proof is complete. 
\end{proof}

The following lemma bounds from below $\delta_n$ when $ 1/n \lesssim  \mu_1$.

\begin{lemma}\label{lem_lb_delta_n}
Assuming  that $\mu_1 \ge c/n$ for some positive constant $c$, the fixed point $\delta_n$ of $R(\cdot)$ satisfies
\[
    \delta_n   \ge \frac{c'}{n} , 
    \]
    where $c'$ is some constant. 
\end{lemma}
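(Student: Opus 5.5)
We argue directly from the fixed-point equation $R(\delta_n)=\delta_n$ and the definition of $R$ in \eqref{kernel_complexity}. Since $\delta_n>0$ and all eigenvalues are nonnegative, we may discard every term of the sum except $j=1$:
\[
\delta_n^2 = R(\delta_n)^2 = \frac1n\sum_{j=1}^\infty \min\{\delta_n,\mu_j\} \;\ge\; \frac1n \min\{\delta_n,\mu_1\}.
\]
The plan is to split according to which quantity attains this minimum.

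If $\min\{\delta_n,\mu_1\}=\delta_n$, the display gives $\delta_n^2\ge \delta_n/n$. Dividing by $\delta_n>0$ yields $\delta_n\ge 1/n$.

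If instead $\min\{\delta_n,\mu_1\}=\mu_1$, then $\delta_n>\mu_1$ holds in this case by definition. The assumption $\mu_1\ge c/n$ then gives $\delta_n\ge c/n$ directly. (The display alone would only give $\delta_n\ge \sqrt{c}/n$, which is a weaker constant when $c\ge 1$.)

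Combining the two cases, $\delta_n\ge c'/n$ with $c'=\min\{1,c\}$.

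The argument is elementary, so there is no real obstacle. The only care needed is to use the case condition $\delta_n\ge\mu_1$ in the second case, rather than the fixed-point inequality, and to note that $\delta_n>0$ exists and is unique, which is already cited from \citet[Lemma 3.2]{bartlett2005local}.
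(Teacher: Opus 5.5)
Your proof is correct, but it takes a different route from the paper's.

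The paper splits on whether $\delta_n > c/n$. In the remaining case $\delta_n \le c/n$, it evaluates $R$ at the test point $c/n$, where the hypothesis gives $R(c/n)^2 \ge \frac{1}{n}\min\{\mu_1, c/n\} = c/n^2$. It then transfers this to $\delta_n$ using the sub-root property that $\delta \mapsto R(\delta)/\sqrt{\delta}$ is non-increasing, which gives $\sqrt{\delta_n} = R(\delta_n)/\sqrt{\delta_n} \ge R(c/n)/\sqrt{c/n} \ge 1/\sqrt{n}$.

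You instead use the fixed-point identity directly at $\delta_n$ and split on $\delta_n$ versus $\mu_1$, so you never need the monotonicity of $R(\delta)/\sqrt{\delta}$. Both arguments end with $c' = \min\{1,c\}$.

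Your version is more elementary, and it shows the hypothesis is only needed at the very last step. With no assumption at all, your two cases give $\delta_n \ge \min\{\mu_1, 1/n\}$, and in fact $\delta_n \ge \min\{1/n, \sqrt{\mu_1/n}\}$. The condition $\mu_1 \ge c/n$ merely turns this into a bound of order $1/n$.

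The paper's test-point comparison is the standard device for fixed points of sub-root functions. It is convenient when $R$ is easier to evaluate at a chosen point $\delta_0$ than at the unknown $\delta_n$, since $R(\delta_0) \ge \delta_0$ already forces $\delta_n \ge \delta_0$.

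One small remark: the caution in your last paragraph is unnecessary. Using the display in the second case gives $\delta_n \ge \sqrt{c}/n$, and combining this with the first case yields $c' = \min\{1,\sqrt{c}\} \ge \min\{1,c\}$. So either choice in the second case works.
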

\begin{proof}
If $\delta_n > c/n$, we complete the proof. It remains to consider the case $\delta_n \le c/n$. 
By the definition of 
$R(\cdot)$ in 
\cref{kernel_complexity}, we note that
\begin{align}\label{eq_c_n}
        R\l(\frac{c}{n}\r)^2  =   \frac{1}{n} \sum_{j=1}^\infty 
        \min \l\{\mu_j,\frac{c}{n}\r\}  \ge   \frac{\mu_1}{n}  \wedge  \frac{c}{n^2}  \ge \frac{c}{n^2}.
    \end{align}
Since the function $R(\delta)/\sqrt{\delta}$ is non-increasing in $\delta$, we have 
    \[
    \sqrt{\delta_n}  = \frac{R(\delta_n)}{ \sqrt{\delta_n}} \ge  \frac{R(c/n)}{ \sqrt{c/n}} \overset{\eqref{eq_c_n}}{\ge}\frac{1}{\sqrt{n}}, 
    \]
    so that
    \[
   \delta_n\ge 1/n. 
    \]
    Combining these two cases gives the desired result.    
\end{proof}

Recall that
a function $\psi: [0,\i)\to  [0,\i)$ is a sub-root function if it is nonnegative, non-decreasing, and if $\delta\mapsto \psi(\delta)/\sqrt{\delta}$ is non-increasing for $\delta>0$.
A useful property of the sub-root function  
is stated as follows. 

\begin{lemma}[Lemma 3.2 in \cite{bartlett2005local}]\label{lem_subroot_monotone}
If $\psi:[0, \infty) \to [0, \infty)$ is a nontrivial\footnote{Functions other than the constant function $\psi\equiv 0$. } 
sub-root function, 
then it is continuous on $[0, \infty)$ and the equation $\psi(\delta)=\delta$ has a unique positive solution $\delta_\star$. Moreover, for all $\delta>0, \delta\geq \psi(\delta)$ if and only if $\delta_\star \leq \delta$. Here, $\delta_\star$ is referred to as the fixed point. 
\end{lemma}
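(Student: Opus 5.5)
The statement to prove is Lemma~\ref{lem_subroot_monotone}: a nontrivial sub-root function $\psi$ is continuous, the equation $\psi(\delta)=\delta$ has a unique positive solution $\delta_\star$, and for $\delta>0$ we have $\delta\ge\psi(\delta)$ if and only if $\delta\ge\delta_\star$. Everything will be read off the auxiliary function $h(\delta):=\psi(\delta)/\sqrt{\delta}$, compared with $\sqrt\delta$.

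\textbf{Continuity.} For $\delta>0$ and $0<\delta'<\delta$, monotonicity of $\psi$ and of $h$ give
\[
\psi(\delta')\le\psi(\delta)\le \sqrt{\delta/\delta'}\,\psi(\delta').
\]
Letting $\delta'\uparrow\delta$ yields left-continuity at $\delta$. The symmetric inequality with $\delta'>\delta$ yields right-continuity.

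At $0$ I need $\psi(0^+)=\psi(0)$. Since $\psi$ is non-decreasing and nonnegative, $\psi(0)\le\psi(0^+)$, so it suffices to show $\psi(0)\ge\psi(0^+)$. This is the step I expect to be the main obstacle: monotonicity of $h$ only controls $\psi$ on $(0,\infty)$ and does not force $\psi(0^+)=0$. I would try to exploit the domain convention that $\psi$ is defined at $0$ together with the sub-root inequality. If that fails, I would note that continuity on $(0,\infty)$ is all that the remaining steps use. In the paper's application $R(0)=0=R(0^+)$ holds directly anyway.

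\textbf{Existence and uniqueness of the fixed point.} For $\delta>0$,
\[
\psi(\delta)=\delta \iff h(\delta)=\sqrt\delta .
\]
The left side $h$ is non-increasing, while $\sqrt\delta$ is strictly increasing. Hence $g(\delta):=h(\delta)-\sqrt\delta$ is strictly decreasing, and there is at most one positive root. Nontriviality will supply the existence part:
\begin{itemize}
\item Since $\psi\not\equiv0$ and $\psi$ is monotone, $\psi(\delta_0)>0$ for some $\delta_0>0$. Then $h(\delta)\ge h(\delta_0)>0$ for all $\delta\le\delta_0$, so $g>0$ near $0^+$.
\item For $\delta\ge\delta_0$, $h(\delta)\le h(\delta_0)$, so $g(\delta)\to-\infty$ as $\delta\to\infty$.
\end{itemize}
Since $g$ is continuous on $(0,\infty)$ by the first step, the intermediate value theorem gives a root $\delta_\star>0$.

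\textbf{Characterization.} For $\delta>0$,
\[
\delta\ge\psi(\delta)\iff g(\delta)\le0 .
\]
Because $g$ is strictly decreasing with $g(\delta_\star)=0$, this holds exactly when $\delta\ge\delta_\star$.
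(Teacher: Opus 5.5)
The paper gives no proof of this lemma; it only cites \citet{bartlett2005local}, so there is no route to compare against. Your argument is correct for continuity on $(0,\infty)$ via the sandwich $\psi(\delta')\le\psi(\delta)\le\sqrt{\delta/\delta'}\,\psi(\delta')$. It is also correct for existence and uniqueness via the strictly decreasing $g(\delta)=\psi(\delta)/\sqrt{\delta}-\sqrt{\delta}$, and for the characterization $\delta\ge\psi(\delta)\iff\delta\ge\delta_\star$.

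The one step you left open, continuity at $0$, cannot be closed by the domain convention or by any other argument, because it is false under the stated definition. Take $\psi(0)=0$ and $\psi(\delta)=1$ for $\delta>0$. This $\psi$ is nonnegative and non-decreasing on $[0,\infty)$, and $\psi(\delta)/\sqrt{\delta}=\delta^{-1/2}$ is non-increasing on $(0,\infty)$. So it is a nontrivial sub-root function, yet $\psi(0^+)=1\neq\psi(0)$. Its fixed point is $\delta_\star=1$, and the remaining conclusions hold, consistent with your proof.

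The sub-root conditions give only $\psi(0)\le\psi(0^+)$, because the ratio condition says nothing at $\delta=0$. The correct statement is therefore continuity on $(0,\infty)$, with continuity at $0$ exactly when $\psi(0)=\psi(0^+)$. Your fallback is the right resolution: the fixed-point claims use only continuity on $(0,\infty)$, and in the paper's application $R(0)=0=R(0^+)$ holds directly, by dominated convergence when $\sum_j\mu_j<\infty$. State it as a correction to the lemma rather than as a contingency plan.
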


 \section{Comparison between the proposed method and two-step algorithm}\label{app_comp_two}

In this section, we start to compare the proposed predictor $\wh f$ with the naive two-step predictor $\wh f_{\two}$ detailed in \cref{rem_twostep} from the theoretical perspective, while their numerical comparison is presented in \cref{sec_simu_comp_two}. 
Recall that for any $\lambda>0$, $\KK_{\lambda} =\KK +\lambda \II_n$.  It is easy to see that the solution $\wh g_{\two}$ satisfies
\begin{equation}\label{def_g_two} 
        \wh g_{\two} =\frac{1}{\sqrt{n}}\sum_{i=1}^n \wt w_i K(X_i,\cdot), \qquad \text{with} \qquad  \wt w=  {1\over \sqrt n}\KK_{\lambda}^{-1}Q_{\XX} \YY. 
\end{equation}
The following theorem states the prediction risk bound of 
$\wh f_{\two}$. Its proof is given in the end of this section.

\begin{theorem}\label{thm_twostep}
    Grant model (\ref{model}) and
    Assumption   \ref{ass_pd_K}.  For any $\lambda >0$, one has 
    \begin{equation}
        \label{risk_bd_two}
        \begin{aligned}
      \cR(\wh f_{\rm two}-f^*) ~ \le ~    & \inf_{\alpha\in \RR^d, f\in \cH_K}\left\{  \cR(\langle \alpha,\rangle + f- f^*)  +  {1\over n} \|P_{\XX} f(\XX)\|^2+  \lambda \|f\|_K^2  \right\}\\
      & ~~ + {\sigma^2  \over n}\left\{ d +  \tr\l(Q_{\XX} \KK^2 \KK_{\lambda}^{-2} \r)\right\}
\end{aligned}
    \end{equation}
\end{theorem}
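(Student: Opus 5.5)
The plan is to follow the proof of \cref{thm_fix}: write the error of the fit $\wh f_{\two}(\XX)$ explicitly, compute the noise part exactly, and bound the deterministic part by a KRR-type oracle argument as in \cref{thm_krr_fix}. Write $\AA := \KK\KK_\lambda^{-1}$, which is symmetric, commutes with $\KK_\lambda$, and satisfies $\II_n - \AA = \lambda\KK_\lambda^{-1}$. Substitute $\YY = f^*(\XX)+\cE$ into $\wh f_{\two}(\XX) = P_{\XX}\YY + \AA Q_{\XX}\YY$ and use $f^*(\XX) = P_{\XX}f^*(\XX) + Q_{\XX}f^*(\XX)$. This gives
\[
\wh f_{\two}(\XX) - f^*(\XX) = P_{\XX}\cE + \AA Q_{\XX}\cE - \lambda\KK_\lambda^{-1}Q_{\XX}f^*(\XX).
\]
Since $\EE[\cE]=0$, the risk splits exactly into a variance term $n^{-1}\EE\|P_{\XX}\cE + \AA Q_{\XX}\cE\|^2$ and a bias term $n^{-1}\|\lambda\KK_\lambda^{-1}Q_{\XX}f^*(\XX)\|^2$.

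For the variance term, I will use $\EE[\cE\cE^\T] = \sigma^2\II_n$. This gives $\sigma^2\tr\bigl((P_{\XX}+\AA Q_{\XX})^\T(P_{\XX}+\AA Q_{\XX})\bigr)$. The cross terms $\tr(P_{\XX}\AA Q_{\XX}) = \tr(\AA Q_{\XX}P_{\XX})$ vanish because $Q_{\XX}P_{\XX}=0$. Since $\rank(\XX)=d$, we have $\tr(P_{\XX}) = d$. The remaining piece is $\tr(Q_{\XX}\AA^2Q_{\XX}) = \tr(Q_{\XX}\KK^2\KK_\lambda^{-2})$. This yields exactly $\sigma^2 n^{-1}\{d + \tr(Q_{\XX}\KK^2\KK_\lambda^{-2})\}$.

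For the bias term, set $h := Q_{\XX}f^*(\XX)\in\RR^n$ and consider
\[
f_\lambda = \argmin_{f\in\cH_K}\Bigl\{\tfrac1n\|f(\XX)-h\|^2 + \lambda\|f\|_K^2\Bigr\}.
\]
By the representer theorem, exactly as in the proof of \cref{thm_krr_fix} but with the vector $h$ as the response, $f_\lambda(\XX) = \AA h$. Hence $\lambda\KK_\lambda^{-1}h = h - f_\lambda(\XX)$. Therefore the bias is at most $\frac1n\|f(\XX) - Q_{\XX}f^*(\XX)\|^2 + \lambda\|f\|_K^2$ for every $f\in\cH_K$.

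It then remains to decompose the first term orthogonally. Write $f(\XX) - Q_{\XX}f^*(\XX) = P_{\XX}f(\XX) + Q_{\XX}(f-f^*)(\XX)$, so by Pythagoras
\[
\|f(\XX) - Q_{\XX}f^*(\XX)\|^2 = \|P_{\XX}f(\XX)\|^2 + \|Q_{\XX}(f-f^*)(\XX)\|^2 .
\]
Moreover, $\|Q_{\XX}(f-f^*)(\XX)\|^2 = \min_{\alpha\in\RR^d}\|\XX\alpha + f(\XX) - f^*(\XX)\|^2$, which is $n$ times the in-sample risk of $\langle\alpha,\cdot\rangle + f - f^*$ at the best $\alpha$ under fixed design. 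Taking the infimum over $f$ and $\alpha$ gives \eqref{risk_bd_two}.

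None of these steps is a serious obstacle. The only point needing care is the representer step: the ``response'' $h$ is a vector of values rather than the evaluation of a function, so I will check that the closed form $f_\lambda(\XX) = \KK\KK_\lambda^{-1}h$ holds for an arbitrary vector target. It does, by the same normal-equation argument used for \eqref{coeff} with $Q_{\XX}$ replaced by $\II_n$.
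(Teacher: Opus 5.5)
Your proposal is correct and is essentially the paper's own proof. It uses the same error decomposition $\wh f_{\two}(\XX)-f^*(\XX) = P_{\XX}\cE + \KK\KK_\lambda^{-1}Q_{\XX}\cE - \lambda\KK_\lambda^{-1}Q_{\XX}f^*(\XX)$, the same exact variance computation, and the same bias bound: the KRR oracle argument of \cref{thm_krr_fix} with target $Q_{\XX}f^*(\XX)$, followed by the Pythagorean split $\|f(\XX)-Q_{\XX}f^*(\XX)\|^2 = \|P_{\XX}f(\XX)\|^2 + \min_{\alpha}\|\XX\alpha+f(\XX)-f^*(\XX)\|^2$ (your cyclic-trace identity $\tr(P_{\XX}\KK\KK_\lambda^{-1}Q_{\XX}) = \tr(\KK\KK_\lambda^{-1}Q_{\XX}P_{\XX}) = 0$ is just a tidier version of the paper's cross-term calculation).
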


We proceed to compare the proposed $\wh f$ with $\wh f_{\two}$ through their prediction risk bounds, with the risk bound for the former is presented in \cref{thm_fix}. For the bias and approximation error, 
we find that the irreducible term in \eqref{risk_bd_two} is the same as that for  $\wh f$, and   
its leading constant is also equal to one. 
However, the two-step method introduces an additional bias $(1/n) \|P_{\XX} f(\XX)\|^2$, which can be large, especially when the linear space spanned by the columns of $\XX$ and the RKHS $\cH_K$ are highly aligned. Comparing the variances, as shown in \cref{lem_var_com}, the variance of $\wh f$ can be bounded from above by that of $\wh f_{\two}$ for any given $\lambda$, up to a parametric order $ {\sigma^2 d / n}$. The order $ {\sigma^2 d / n}$ can be absorbed into the variance of OLS and, as discussed in \cref{rem_var}, is negligible under general settings. Summarizing, our jointly optimizing procedure has a clear advantage over the naive 
two-step algorithm. \\

Recall $\VV$ from  \eqref{V_eigen} and $\VV_\lambda = \VV + \lambda \II_n$. 
The following lemma states that for any given $\lambda>0$, 
the variance  $ \tr(\VV^2\VV^{-2}_\lambda) $ in \cref{thm_fix} is not greater than the variance $\tr(Q_{\XX} \KK^2 \KK_{\lambda}^{-2})$ in \cref{thm_twostep}, up to a parametric order $ {\sigma^2 d / n}$. 
\begin{lemma}\label{lem_var_com}
For any $\lambda>0$, one has 
    \[
\tr\l( Q_{\XX} \KK^2 \KK_{\lambda}^{-2} \r)  ~\le ~    \tr(\VV^2\VV^{-2}_\lambda)  + \sigma^2  { d \over n}. 
    \]
\end{lemma}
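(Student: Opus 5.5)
Throughout I read the lemma as a comparison of the two variance terms as they enter the risk bounds, namely
\[
{\sigma^2\over n}\tr\l( Q_{\XX} \KK^2 \KK_{\lambda}^{-2} \r) ~\le~ {\sigma^2\over n}\tr(\VV^2\VV^{-2}_\lambda) + \sigma^2 {d\over n},
\]
which is how it is used in the comparison of \cref{thm_fix} with \cref{thm_twostep}. This is equivalent to the purely matrix inequality
\[
\tr\l( Q_{\XX} \KK^2 \KK_{\lambda}^{-2} \r) \le \tr(\VV^2\VV^{-2}_\lambda) + d.
\]
I flag that this reading is needed. Taken literally, with unscaled traces on both sides, the extra term $\sigma^2 d/n$ cannot be correct for arbitrary $\sigma$: the left-hand side does not involve $\sigma$ at all, while the gap between the two traces can be of order $d$ even as $\sigma\to 0$. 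So I would prove the matrix inequality with additive term $d$ and multiply through by $\sigma^2/n$.

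The approach is to compare both traces to $\sum_i g(\wh\mu_i)$, where $g(t)=t^2/(t+\lambda)^2$ is non-decreasing on $[0,\infty)$ and takes values in $[0,1)$. The traces sit on opposite sides of this sum: the two-step trace lies below it, and the proposed trace lies below it by at most $d$.

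\textbf{Upper bound for the two-step term.} Let $\AA=\KK\KK_\lambda^{-1}$, which is symmetric and positive semi-definite with eigenvalues $\wh\mu_i/(\wh\mu_i+\lambda)$. By cyclicity of the trace and $Q_{\XX}\preceq \II_n$,
\[
\tr(Q_{\XX}\KK^2\KK_\lambda^{-2}) = \tr(\AA Q_{\XX}\AA) \le \tr(\AA^2) = \sum_{i=1}^n g(\wh\mu_i).
\]

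\textbf{Lower bound for the proposed term.} As in the proof of \cref{lem_comp}, $\VV$ has the same eigenvalues as $\KK^{1/2}Q_{\XX}\KK^{1/2}$. We can write
\[
\KK^{1/2}Q_{\XX}\KK^{1/2} = \KK - \KK^{1/2}P_{\XX}\KK^{1/2},
\]
a perturbation of $\KK$ by a positive semi-definite matrix of rank at most $d$. Weyl's inequality (equivalently, the Courant--Fischer min-max characterization used in \cref{lem_comp}, restricted to subspaces orthogonal to the range of $\KK^{1/2}\XX$) then gives $\wh\nu_i\ge \wh\mu_{i+d}$ for all $i\le n-d$. This interlacing step is the only genuinely nontrivial step, and I would verify it directly via Courant--Fischer. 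Since $g$ is monotone and nonnegative,
\[
\tr(\VV^2\VV_\lambda^{-2})=\sum_{i=1}^n g(\wh\nu_i)\ge \sum_{i=1}^{n-d} g(\wh\mu_{i+d}) = \sum_{i=1}^n g(\wh\mu_i)-\sum_{i=1}^{d} g(\wh\mu_i)\ge \sum_{i=1}^n g(\wh\mu_i)-d,
\]
where the last step uses $g<1$.

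\textbf{Conclusion.} Chaining the two displays yields $\tr(Q_{\XX}\KK^2\KK_\lambda^{-2})\le \tr(\VV^2\VV_\lambda^{-2})+d$. Multiplying by $\sigma^2/n$ gives the lemma in the variance-term form above, with parametric price $\sigma^2 d/n$.
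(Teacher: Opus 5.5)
Your argument is correct for the inequality as displayed, read on unscaled traces with additive term $d$ (equivalently, after multiplying through by $\sigma^2/n$). Your objection to the literal $\sigma^2 d/n$ is sound. The paper's own proof makes the same slip when it places $\sigma^2/n$ in front of only the $P_{\XX}$ term.

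Note, however, that the paper's proof establishes the \emph{reverse} inequality, $\tr(\VV^2\VV_\lambda^{-2})\le\tr(Q_{\XX}\KK^2\KK_\lambda^{-2})+d$. It uses \cref{lem_comp} to get $\tr(\VV^2\VV_\lambda^{-2})\le\tr(\KK^2\KK_\lambda^{-2})$. It then writes $\tr(\KK^2\KK_\lambda^{-2})=\tr(Q_{\XX}\KK^2\KK_\lambda^{-2})+\tr(P_{\XX}\KK^2\KK_\lambda^{-2})$ and bounds the last term by $\|\KK\KK_\lambda^{-1}\|_\op^2\,\tr(P_{\XX})\le d$. This is the direction the surrounding text needs (``the variance of $\wh f$ can be bounded from above by that of $\wh f_{\two}$''), so the displayed lemma appears to have its two traces swapped. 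Consequently, your remark that your reading is how the lemma is used against \cref{thm_twostep} is not accurate: your inequality bounds the two-step variance by the proposed one, not conversely.

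The two proofs are mirror images through the common pivot $\sum_i \wh\mu_i^2/(\wh\mu_i+\lambda)^2=\tr(\KK^2\KK_\lambda^{-2})$.
\begin{itemize}
\item In the paper's direction, the eigenvalue step is the cheap domination $\wh\nu_i\le\wh\mu_i$, and the loss of $d$ comes from discarding $P_{\XX}$.
\item In yours, the step $Q_{\XX}\preceq\II_n$ is free, and the loss of $d$ is paid in the sharper interlacing $\wh\nu_i\ge\wh\mu_{i+d}$ (Weyl for a rank-$d$ positive semi-definite perturbation). Your Courant--Fischer sketch handles this correctly.
\end{itemize}
Neither direction holds without the slack. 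Take $n=2$, $d=1$, $\XX=e_1$, and $\KK=ww^\T$ with $w=(1,1)^\T/\sqrt2$. Then $\tr(Q_{\XX}\KK^2\KK_\lambda^{-2})=1/(2(1+\lambda)^2)$ and $\tr(\VV^2\VV_\lambda^{-2})=1/(1+2\lambda)^2$, and the first exceeds the second exactly when $\lambda>1/\sqrt2$. So your argument and the paper's are complementary, and together they give $|\tr(Q_{\XX}\KK^2\KK_\lambda^{-2})-\tr(\VV^2\VV_\lambda^{-2})|\le d$.
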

\begin{proof}
Note that  for any $\lambda>0$, 
\begin{align*}
\tr\l(Q_{\XX} \KK^2 \KK_{\lambda}^{-2} \r)  ~  = ~  &   \tr\l( \KK^2 \KK_{\lambda}^{-2}  \r) -  \tr\l(   P_{\XX}  \KK^2 \KK_{\lambda}^{-2} \r)   \\
 ~ \ge ~   &   \tr\l( \VV^2 \VV_\lambda^{-2} \r)    -   \tr\l( P_{\XX}  \KK^2 \KK_{\lambda}^{-2} \r) &&\text{by  \cref{lem_comp},} 
\end{align*}
so that 
\begin{align*}
 \tr\l( \VV^2 \VV_\lambda^{-2} \r)    ~ \le ~&  \tr\l(Q_{\XX} \KK^2 \KK_{\lambda}^{-2} \r)   + {\sigma^2  \over n}   \tr\l( P_{\XX}  \KK^2 \KK_{\lambda}^{-2} \r)   \\
~ \le ~&    \tr\l(Q_{\XX} \KK^2 \KK_{\lambda}^{-2} \r)  +  {\sigma^2  \over n} ~ \|\KK\KK_{\lambda} ^{-1} \|_\op^2~  \tr(P_{\XX})  \\
~ \le ~&    \tr\l(Q_{\XX} \KK^2 \KK_{\lambda}^{-2} \r)  +\sigma^2   { d \over n} &&\text{by $\rank(\XX) =d$}.
\end{align*}
The proof is complete. 
\end{proof}

\begin{proof}[Proof of \cref{thm_twostep}]
  We follow similar arguments in the proof of \cref{thm_fix}.   Start with
  \begin{align*}
 \wh f_{\two} (\XX) - f^*  (\XX ) 
&
~ =~ P_ \XX  \YY  +  \KK\KK_{\lambda} ^{-1} Q_{\XX} \YY -  f^*  (\XX) 
\\
&
~ =~ P_ \XX  \cE   +   \KK\KK_{\lambda} ^{-1} Q_{\XX} \cE +  \KK\KK_{\lambda}^{-1}   Q_{\XX}  f^*  (\XX)    - Q_{\XX} f^*  (\XX) \\
&
~ =~ P_ \XX  \cE   +   \KK\KK_{\lambda}^{-1}  Q_{\XX} \cE -  \lambda  \KK_{\lambda}^{-1} Q_{\XX}  f^*  (\XX) 
\end{align*}
Therefore, the prediction risk of the predictor $\wh f_{\two}$  can be decomposed into   
\[
\cR(\wh f_{\two}) ~ =~ 
 {1 \over n} \EE \l\| P_ \XX  \cE   +   \KK\KK_{\lambda} ^{-1}  Q_{\XX} \cE\r\| ^2 +  {1 \over n}\l \|  \lambda \KK_{\lambda}^{-1}  Q_{\XX}  f^*  (\XX)\r\|^2 .
\]
Further note that
\begin{equation}
    \label{decom_two}
    \begin{aligned}
 {1 \over n} \EE \| P_ \XX   \cE   + \KK\KK_{\lambda}  Q_{\XX} \cE\| ^2  
  & =   {\sigma^2  \over n} \left\{ d +    \tr\l(\KK\KK_{\lambda} ^{-1}Q_{\XX}  \KK\KK_{\lambda} ^{-1}\r) +2 \tr\l(P_{\XX}\KK\KK_{\lambda} ^{-1}Q_{\XX}  \r)\right\}
   \\
   & ={\sigma^2  \over n} \left\{ d +     \tr\l( Q_{\XX} \KK^2 \KK_{\lambda}^{-2} \r)  \right\}
   ,
\end{aligned}
\end{equation}
where  the last step follows from 
\begin{align*}
    \tr\l(P_{\XX} \KK \KK_{\lambda} ^{-1}Q_{\XX}  \r)     & =    \tr\l(P_{\XX}\KK\KK_{\lambda} ^{-1} \r)- \tr\l(P_{\XX}\KK\KK_{\lambda} ^{-1}  P_{\XX} \r) \\
    & = \tr\l(P_{\XX}\KK\KK_{\lambda} ^{-1} \r)- \tr\l(P_{\XX}\KK\KK_{\lambda} ^{-1} \r) = 0. 
\end{align*}

To bound the second term in the right-hand of \eqref{decom_two}, we follow the same argument of proving \cref{thm_krr_fix} by replacing $f^*(\XX)$ by $Q_{\XX} f^*(\XX)$ to obtain
\begin{align*}
     {1 \over n}\l \|  \lambda \KK_{\lambda}^{-1}  Q_{\XX}  f^*  (\XX)\r\|^2 ~ \le~   \inf_{f\in \cH_K}\l\{
            {1\over n} \|f(\XX) -Q_{\XX} f^*(\XX)\|^2 + \lambda \|f\|_K^2
        \r\}.
\end{align*}
By noting that
\begin{align*}
   \|f(\XX) -Q_{\XX} f^*(\XX)\|^2   &  
   = \|P_{\XX} f(\XX) +  Q_{\XX} f(\XX) -Q_{\XX} f^*(\XX)\|^2 \\
   & = \|P_{\XX} f(\XX) \|^2+ \|  Q_{\XX} f(\XX) -Q_{\XX} f^*(\XX)\|^2
   \\
   & = \|P_{\XX} f(\XX) \|^2+ \inf_{\alpha\in \RR^d} ~\l \| \XX\alpha+  f(\XX) - f^*(\XX)\r\|^2, 
\end{align*}
we complete the proof. 
\end{proof}

\subsection{Numerical comparison between the proposed method and two-step algorithm}\label{sec_simu_comp_two}
In this section we conduct simulation studies to numerically verify the 
advantages of the adaptive KRR (AKRR) proposed  in \cref{sec_method_krr} over the 
two-step procedure (TKRR) described in \cref{rem_twostep}.

We consider model \eqref{model} with 
\begin{align} \label{regre_function}
    f^*(X)
=
\frac{1}{\sqrt d}
\sum_{j=1}^d (-1)^{j-1}X_j
+
\alpha
\l[
\frac{1}{\sqrt d}
\sum_{j=1}^d
\l\{
\sin(\pi X_j)
+
\frac12\cos(2\pi X_j)
\r\}
+
\frac{\sum_{j=1}^{d-1}
\sin(X_jX_{j+1})}{2\sqrt{d-1}}
\r],
\end{align}
 $\epsilon \sim N(0,1)$, and $X$ 
 follows the  multivariate normal distribution $N(0,  \Sigma )$, where
\begin{align} \label{X_covariance}
    \Sigma_{jk}
: = 
\varrho^{|j-k|/s},
\qquad 1\le j,k\le d.
\end{align}
In this experiment, 
we choose $\varrho =0.9$ and $s=6$,  the sample size is set to $n=400$, and the dimension of $X$ is set to $d=20$.  
We vary $\alpha$ to examine its effect on the prediction performance of  AKRR and TKRR.  
We adopt the Gaussian kernel $K(x,x') =\exp(-\|x-x'\|^2/\gamma)$ with $\gamma>0$. The regularization parameter $\lambda$ is always 
selected via cross-validation (CV). Following the same treatment in  \cref{sec_gaussian}, we consider both the median-adjusted and CV-selected $\gamma$. 
For the latter choice,   $\lambda$
and
$\gamma$ are jointly selected via CV.
The obtained results are reported in Figure \ref{akrr_vs_tkrr}. We observe that, regardless of how  $\gamma$ is selected, the proposed method consistently improves over TKRR. The improvement becomes more evident as the nonlinear strength in the regression function increases.

\begin{figure}[ht]
    \centering
        {\includegraphics[width=0.5\textwidth]{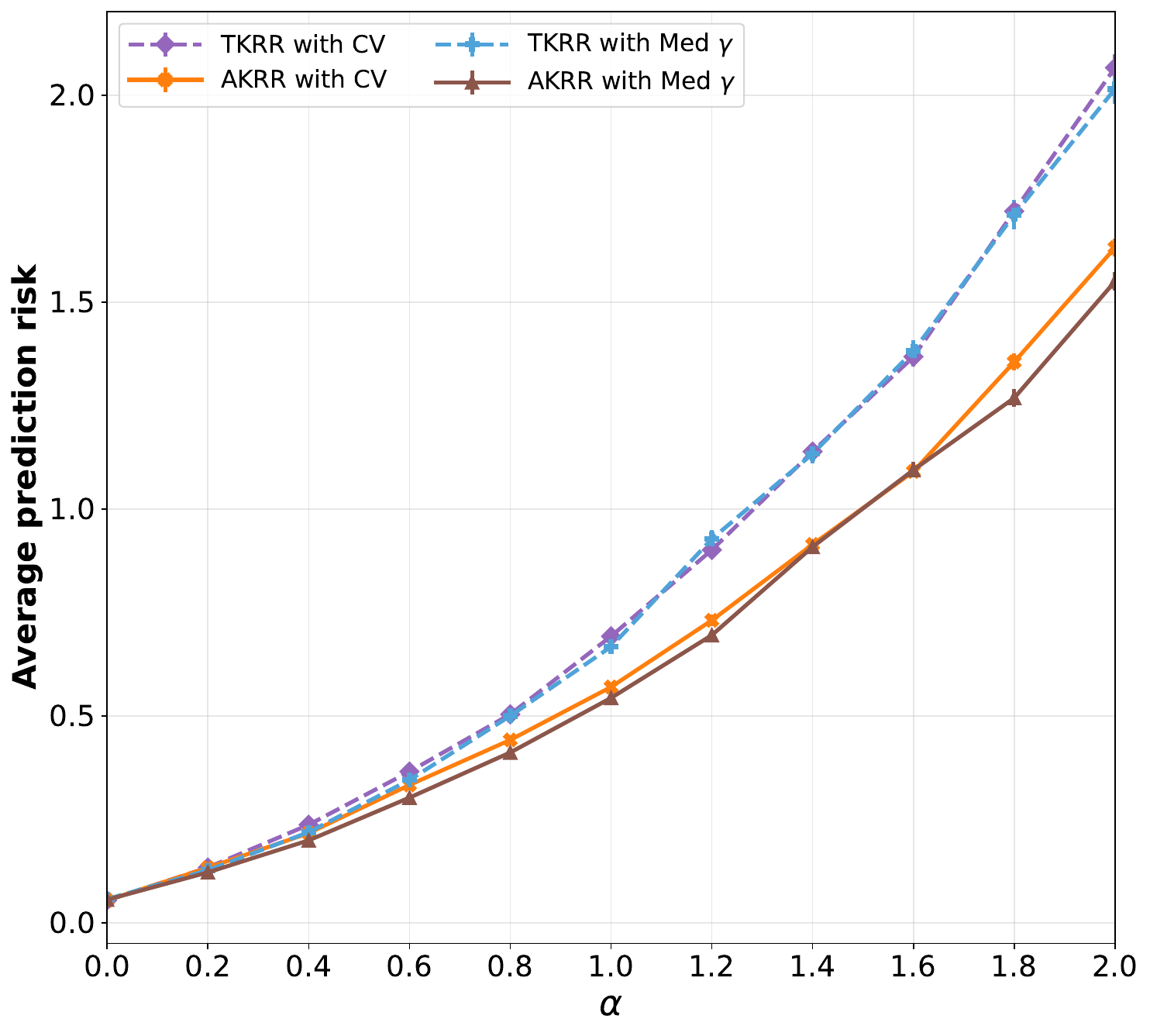}}
\caption{
The average prediction risk against $\alpha$.
}
\label{akrr_vs_tkrr}
\end{figure}


\section{Main Proofs for \texorpdfstring{\cref{sec_ridge}}{\texttwoinferior}} \label{app_pf_ridge}

\subsection{Proof of \texorpdfstring{\cref{closed_form_ridge}}{\texttwoinferior}}\label{app_pf_closed_form_ridge}

\begin{proof}[Proof of \eqref{closed_form_ridge}]
For any fixed $g \in \cH_K$, by solving the corresponding profile optimization problem
\[
\wh \alpha_{r,g}
~ =~  \argmin_{\alpha\in \RR^d}  \l\{ \frac{1}{n} \sum_{i=1}^n\l(Y_i -   X_i ^\T  \alpha - g(X_i)\r)^2 +  \mu \|\alpha\|^2
+\lambda\|g\|_{K}^2  \r\},
\]
we obtain 
\begin{align}\label{def_alpha_r_g}
    \wh \alpha_{r,g} = ( \XX ^\T \XX+ n \mu  ~ \II_d) ^{-1}  \XX^\T  (\YY - g(\XX)).
\end{align}
By substituting $\wh \alpha_{r,g}$ into 
\eqref{def_method_ridge}, we obtain
\[
\wh g_r
~ =~ \argmin_{ g \in \cH_K}  \left\{ \frac{1}{n} \sum_{i=1}^n\l(Y_i- X_i ^\T  \wh \alpha_{r,g}   - g(X_i)\r)^2 + \mu \|\wh \alpha_{r,g} \|^2  +\lambda \|g\|_K^2 \right\}.
\]
For any $\mu\ge 0$, recall 
\begin{align} \label{def_P_mu}
    P_{\mu}  = \XX ( \XX ^\T \XX+ n \mu  ~ \II_d) ^{-1}  \XX^\T,\qquad   Q_\mu =\II_n- P_\mu. 
\end{align}
Armed with \cref{lem_ridge},
in conjunction with the representer theorem \citep{kimeldorf1971some},  the optimal solution $\wh g_r$  has the form of 
\[
\wh g _r ={ 1\over \sqrt{n}} \sum_{i=1}^n \wh \beta_i K(X_i,\cdot),
\]
where the coefficients $\wh \beta = (\wh \beta_1,\ldots, \wh \beta_n)^\T$ are obtained by  solving 
\[
\wh \beta =   \argmin_{\beta \in \RR^n}  \left \{
    {1\over n}\left\| Q_\mu^{1/2} (\YY - \sqrt{n} ~ \KK \beta ) \right\|^2
    +\lambda  ~ \beta^\T \KK \beta \right \}
\]
If $Q_\mu \KK+\lambda \II_n $ is invertible, then one can follow the same treatment as in \cref{app_pf_sec_2} to deduce
\[
  \wh \beta ~  =~ {1\over \sqrt n}\bigl(Q_\mu   \KK+\lambda \II_n\B)^{-1} Q_\mu  \YY,
\]
so that
\[
 \wh g_r(\XX) = \KK( Q_\mu \KK +\lambda \II_n )^{-1} Q_\mu \YY.
\]
Consequently, we have 
$
    \wh \alpha_r = ( \XX ^\T \XX+ n \mu  ~ \II_d) ^{-1}  \XX^\T (\YY -  \wh g_r(\XX)) .
$\\

It remains to prove that $Q_\mu  \KK+\lambda \II_n $ is invertible. When $\mu =0$, the proof is complete in \cref{app_pf_sec_2}. 
For $\mu >0$,  denote the eigenvalues of  $(1/n)\XX \XX^\T \in \RR^{n\times n}$ be $\wh \eta_1,\cdots,\wh \eta_n$, so that the eigenvalues of  $Q_\mu$ are
$1- \wh \eta_i/ (\wh \eta_i +\mu),~  i\in [n]$. Hence, $Q_\mu$ is positive  definite. 
Suppose that there exists a $v\in \RR^n$ such that
\[
(  Q_\mu  \KK + \lambda \II_n )~ v  = 0. 
\]
Then for $\lambda>0$, we have $v =  Q_\mu  u$ with $u = - \lambda^{-1} \KK v$, from which we find that 
\[
u^\T  Q_{\mu} \KK Q_\mu  u  +  \lambda u^\T Q_\mu^2  u=0,
\]
implying $u =0$.
This further gives
\[
 v =Q_\mu   u =0, 
\]
concluding that $ Q_\mu \KK + \lambda \II_n$ is invertible,
hence the proof is complete. 
\end{proof}

\begin{lemma}\label{lem_ridge}
For any $\mu, \lambda>0$, one has
   \[
\wh g_r
~ =~ \argmin_{ g \in \cH_K} \l\{
\frac{1}{n}~ \b\|Q_\mu ^{1/2}  (\YY-g(\XX)) \b\|^2 + \lambda\|g\|_K^2 
\r\}.
\]
\end{lemma}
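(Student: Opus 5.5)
The plan is to profile out $\alpha$ explicitly and show that the resulting objective in $g$ equals the stated weighted least-squares criterion. Fix $g\in\cH_K$ and write $r := \YY - g(\XX)$. By \eqref{def_alpha_r_g}, the minimizing $\alpha$ is $\wh\alpha_{r,g} = (\XX^\T\XX + n\mu\II_d)^{-1}\XX^\T r$, so $\XX\wh\alpha_{r,g} = P_\mu r$. The profiled objective is then
\[
\frac1n\|r - P_\mu r\|^2 + \mu\|\wh\alpha_{r,g}\|^2 + \lambda\|g\|_K^2 = \frac1n\|Q_\mu r\|^2 + \mu\, r^\T \XX(\XX^\T\XX+n\mu\II_d)^{-2}\XX^\T r + \lambda\|g\|_K^2 .
\]
Because $\wh g_r$ minimizes this over $g$, it suffices to prove the quadratic-form identity
\[
\frac1n Q_\mu^2 + \mu\,\XX(\XX^\T\XX+n\mu\II_d)^{-2}\XX^\T = \frac1n Q_\mu .
\]

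To check this identity, take the SVD $\XX = U D W^\T$ with singular values $s_j$ (zero padded as needed), and set $\eta_j = s_j^2/n$. In the eigenbasis $U$ every matrix involved is diagonal. $P_\mu$ has entries $\eta_j/(\eta_j+\mu)$, so $Q_\mu$ has entries $\mu/(\eta_j+\mu)$. The matrix $\XX(\XX^\T\XX+n\mu\II_d)^{-2}\XX^\T$ has entries $s_j^2/(s_j^2+n\mu)^2 = \eta_j/\bigl(n(\eta_j+\mu)^2\bigr)$. The left side therefore has entries
\[
\frac1n\Bigl[\frac{\mu^2}{(\eta_j+\mu)^2} + \frac{\mu\eta_j}{(\eta_j+\mu)^2}\Bigr] = \frac1n\cdot\frac{\mu}{\eta_j+\mu},
\]
which matches $(1/n)Q_\mu$. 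On the orthogonal complement of the column space of $\XX$, both sides equal $(1/n)\II$. This holds without assuming $\rank(\XX)=d$, since directions with $s_j=0$ give $Q_\mu$ entry $1$ and a zero second term. The SVD makes $P_\mu$ and the second matrix simultaneously diagonal, so the identity reduces to this scalar check.

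It follows that $(1/n)\|Q_\mu r\|^2 + \mu\|\wh\alpha_{r,g}\|^2 = (1/n)\|Q_\mu^{1/2} r\|^2$, where $Q_\mu^{1/2}$ is well defined because $Q_\mu$ is positive definite for $\mu>0$. This is exactly the first term of the criterion in \cref{lem_ridge}. Since the joint minimization in \eqref{def_method_ridge} over $(\alpha,g)$ equals minimizing this profiled objective over $g$, the $g$-component $\wh g_r$ of the joint minimizer is the claimed argmin. The main point to be careful about is only the bookkeeping of the factors of $n$ in $P_\mu$; once the common diagonalization is in place, no real obstacle remains.
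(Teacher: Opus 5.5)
Your proof is correct and follows the same route as the paper. Both arguments profile out $\alpha$ via $\wh\alpha_{r,g}=(\XX^\T\XX+n\mu\II_d)^{-1}\XX^\T r$ and show that the profiled criterion collapses to $\frac1n r^\T Q_\mu r+\lambda\|g\|_K^2$. The only difference is how the quadratic-form identity is checked: you diagonalize via the SVD of $\XX$, whereas the paper expands $\|r-\XX\wh\alpha_{r,g}\|^2$ and uses the normal equation $(\XX^\T\XX+n\mu\II_d)\wh\alpha_{r,g}=\XX^\T r$ to get $\wh\alpha_{r,g}^\T(\XX^\T\XX+n\mu\II_d)\wh\alpha_{r,g}=r^\T P_\mu r$ directly, with no diagonalization needed.
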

\begin{proof}
  The objective function in \eqref{def_method_ridge} can be written as 
\[
Q (\alpha, g) ~ : =~
\frac{1}{n} \| \YY-\XX \alpha-g(\XX)\|^2+ \mu\|\alpha \| ^2+\lambda\|g\|_K^2. 
\]
Recalling  $\wh \alpha_{r,g} = (\XX^\T \XX + n\mu~  \II_d)^{-1}  (\YY-g(\XX)) $ from \eqref{def_alpha_r_g}, we have
\begin{align*}
 & Q(\wh \alpha_{r,g},f)\\ 
 ~=~  & \frac{1}{n} \|\YY - g(\XX)\|^2
- \frac{2}{n}\l \langle  \XX \wh \alpha_{r,g},  \YY-g(\XX)\r\rangle
+\frac{1}{n} \| \XX \wh \alpha_{r,g}\|^2
+ \mu \|\wh \alpha_{r,g}\|^2 +\lambda \|g\| _K^2
\\
~=~  &   \frac{1}{n} \|\YY- g(\XX)\|^2  -\frac{2}{n}\l \langle  \XX \wh \alpha_{r,g},  \YY-g(\XX)\r\rangle 
+ \frac{1}{n}  \wh \alpha_{r,g}^\T ( \XX^\T \XX + n \mu~ \II _d)~ \wh \alpha_{r,g} + \lambda\| g\| _K^2\\
~=~  &   \frac{1}{n} \|\YY- g(\XX)\|^2  -\frac{2}{n}(\YY-g(\XX))^\T P_\mu ~ (\YY-g(\XX))
+ \frac{1}{n}  \wh \alpha_{r,g}^\T ( \XX^\T \XX + n \mu~ \II _d)~ \wh \alpha_{r,g} + \lambda\| g\| _K^2.
\end{align*}
Using the fact  that
\[
(\XX^\T \XX+ n \mu ~ \II _d) ~ \wh \alpha_{r,g} ~=~  \XX^\T (\YY-g(\XX)),
\]
we obtain
\begin{align*}
     \wh \alpha_{r,g}^\T ( \XX^\T \XX + n \mu~ \II _d)~ \wh \alpha_{r,g}     &  ~=~ \wh \alpha_{r,g}^\T  \XX^\T (\YY-g(\XX))\\
     & ~=~  (\YY-g(\XX))^\T P_\mu ~ (\YY-g(\XX))&&\text{by \eqref{def_P_mu},}
\end{align*}
from which we have
\begin{align*}
    Q(\wh \alpha_{r,g},f) 
    ~=~
\frac{1}{n}\b \| Q_\mu^{1/2}(\YY-g(\XX) )\b\|^2 +\lambda \| g\|_K^2.
\end{align*}
The proof is complete. 
\end{proof}

\subsection{Proof of \texorpdfstring{\cref{thm_fix_ridge}}{\texttwoinferior}}\label{app_pf_thm_ridge}

    Recall  $P_\mu$ and $Q_{\mu}$ from \cref{def_P_mu}. 
    Recall that  $ \wh \Sigma = (1/n) \XX^\T\XX $ and its eigenvalues  are written as $\wh \tau_1\ge \cdots\ge\wh \tau_d\ge0$. 
For any $\lambda,\mu>0$, define
\begin{align}\label{def_P_lam_mu}
    P_{ \mu,\lambda}  ~ =~  \KK( Q_{\mu}  \KK+\lambda \II_n)^{-1} Q_\mu , \qquad Q_{\mu,\lambda}  ~ =~    \II_n - P_{ \mu,\lambda} .
\end{align}
Further define $\wt \VV$  and write its eigen-decomposition as 
\begin{equation}\label{V_eigen_wt_ridge}
 \wt    \VV :~=~Q_{\mu} ~ \KK Q_{\mu} ~  =~ \sum_{j=1}^n   \wt \nu_j  \wt v_j  \wt v_j^\T
\end{equation} 
The following lemma is the version of \cref{lem_rem_Q_PQ_lbd} adapted to the present setting. Its proof follows the same argument as that of \cref{lem_rem_Q_PQ_lbd} and is omitted here.
\begin{lemma}\label{lem_rem_Q_PQ_lbd_ridge}
    Let  $  P_{ \mu,\lambda}$ and $Q_{\mu,\lambda}$ be defined in \eqref{def_P_lam_mu}. One has 
    \begin{align}\label{eq_QX_Plbd_ridge} 
         Q_\mu     P_{ \mu,\lambda}  &~=~  Q_\mu \KK^{1/2} (\KK^{1/2}  Q_\mu \KK^{1/2} + \lambda \II_n )^{-1} \KK^{1/2} Q_\mu  ~=~    \wt \VV~  \wt \VV_\lambda^{-1},\\\label{eq_QX_Qlbd_ridge}
     Q_\mu     Q_{ \mu,\lambda}& ~=~    \lambda    Q_\mu (    Q_\mu \KK   Q_\mu +\lambda \II_n )^{-1}  Q_\mu ~=~   \lambda   Q_\mu ~\wt \VV_\lambda^{-1}   Q_\mu .
    \end{align}
    Immediately, we have $\|  Q_\mu P_{ \mu,\lambda}\|_\op\le 1 $ and $\|   Q_\mu  Q_{ \mu,\lambda}  \|_\op\le 1$.
\end{lemma}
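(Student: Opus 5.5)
The plan is to mirror the proof of \cref{lem_rem_Q_PQ_lbd} line by line, with $Q_{\XX}$ replaced by $Q_\mu$ and $\VV$ by $\wt\VV$. The ingredients are: symmetry and positive semi-definiteness of $\KK$ and $Q_\mu$; the invertibility of $Q_\mu\KK+\lambda\II_n$, already shown in \cref{app_pf_closed_form_ridge}; the Woodbury identity; and the push-through identity $A(BA+\lambda\II_n)^{-1}=(AB+\lambda\II_n)^{-1}A$.

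\textbf{First equality in \eqref{eq_QX_Plbd_ridge}.} Apply Woodbury to $(Q_\mu\KK+\lambda\II_n)^{-1}$, writing $Q_\mu\KK = (Q_\mu\KK^{1/2})\KK^{1/2}$. This gives
\[
\KK(Q_\mu\KK+\lambda\II_n)^{-1}=\KK^{1/2}(\KK^{1/2}Q_\mu\KK^{1/2}+\lambda\II_n)^{-1}\KK^{1/2}.
\]
Equivalently, use push-through with $A=\KK^{1/2}$ and $B=Q_\mu\KK^{1/2}$. Left-multiplying by $Q_\mu$ and right-multiplying by $Q_\mu$ then yields the first expression for $Q_\mu P_{\mu,\lambda}$. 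This step uses only symmetry of $Q_\mu$, so it holds for every $\mu\ge0$.

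\textbf{Second equality in \eqref{eq_QX_Plbd_ridge}.} I will repeat the argument with the roles of $\KK^{1/2}$ and $Q_\mu$ exchanged, aiming to reach $\wt\VV\wt\VV_\lambda^{-1}=Q_\mu\KK Q_\mu(Q_\mu\KK Q_\mu+\lambda\II_n)^{-1}$. Concretely, I first rewrite
\[
Q_\mu\KK(Q_\mu\KK+\lambda\II_n)^{-1}Q_\mu \;=\; Q_\mu-\lambda(Q_\mu\KK+\lambda\II_n)^{-1}Q_\mu .
\]
Next, push-through with $A=Q_\mu$ and $B=\KK Q_\mu$ turns the inverse into $(Q_\mu\KK Q_\mu\cdot\,\cdot+\lambda\II_n)^{-1}$ form. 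To do this, I need to absorb the extra factor of $Q_\mu$ that appears, via $Q_\mu^2=Q_\mu$.

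\textbf{The identity \eqref{eq_QX_Qlbd_ridge}.} This follows exactly as in the proof of \eqref{eq_QX_Qlbd}. Write $Q_\mu Q_{\mu,\lambda}=Q_\mu-Q_\mu P_{\mu,\lambda}$ and substitute the previous display. Sandwich between $Q_\mu$'s, then combine $\II_n-\wt\VV\wt\VV_\lambda^{-1}=\lambda\wt\VV_\lambda^{-1}$. The operator-norm bounds then come from the spectral form: $\wt\VV\wt\VV_\lambda^{-1}$ has eigenvalues $\wt\nu_j/(\wt\nu_j+\lambda)\in[0,1)$. In addition, $\lambda Q_\mu\wt\VV_\lambda^{-1}Q_\mu\preceq Q_\mu^2\preceq\II_n$, since $0\preceq Q_\mu\preceq\II_n$ because its eigenvalues are $1-\wh\eta_i/(\wh\eta_i+\mu)$.

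\textbf{Main obstacle.} The hard step is the one where the proof of \cref{lem_rem_Q_PQ_lbd} uses idempotence, namely ``since $Q_{\XX}$ is a projection'' and the move from the $\KK^{1/2}$-sandwich to the $Q\KK Q$ form. For $\mu>0$, $Q_\mu$ is not a projection, so $Q_\mu^2\neq Q_\mu$. I would first try to justify these moves directly for $\mu=0$, where $Q_0=Q_{\XX}$ and everything reduces to \cref{lem_rem_Q_PQ_lbd}. For $\mu>0$, I would check carefully whether the extra $Q_\mu$ factors can be absorbed. If they cannot, the stated forms involving $\wt\VV=Q_\mu\KK Q_\mu$ must be verified by an explicit computation in a common eigenbasis, or else checked against a $2\times2$ example before the statement is relied on.
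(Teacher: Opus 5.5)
You follow the same route as the paper, whose proof consists only of the remark that the argument for \cref{lem_rem_Q_PQ_lbd} carries over. You also correctly single out the step where that argument uses $Q_{\XX}^2=Q_{\XX}$. But you leave that step unresolved, and it cannot be resolved. For $\mu>0$ the second equality in \eqref{eq_QX_Plbd_ridge} and both expressions in \eqref{eq_QX_Qlbd_ridge} are false, so neither your plan nor the paper's omitted argument can establish them.

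Take $n=d=1$, $X_1=1$, $\mu=\lambda=1$ and any kernel with $K(X_1,X_1)=1$. Then $Q_\mu=1/2$, $\KK=1$ and $P_{\mu,\lambda}=1/3$, hence $Q_\mu P_{\mu,\lambda}=1/6$ and $Q_\mu Q_{\mu,\lambda}=1/3$. On the other side, $\wt\VV=1/4$ gives $\wt\VV\wt\VV_\lambda^{-1}=1/5$ and $\lambda Q_\mu\wt\VV_\lambda^{-1}Q_\mu=1/5$. (With $\KK=0$, \eqref{eq_QX_Qlbd_ridge} even reduces to $Q_\mu=Q_\mu^2$.)

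So absorbing the extra $Q_\mu$ is not a detail to check later. A common-eigenbasis computation cannot help either, since the commuting scalar case already fails. Your operator-norm argument is also routed through these spectral forms, so as written it bounds the wrong matrices.

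What does hold: your push-through proof of the first equality in \eqref{eq_QX_Plbd_ridge} is valid for every $\mu\ge0$. For the rest, your own rewriting gives $Q_\mu Q_{\mu,\lambda}=\lambda(Q_\mu\KK+\lambda\II_n)^{-1}Q_\mu$. Instead of invoking idempotence, split $Q_\mu=Q_\mu^{1/2}Q_\mu^{1/2}$, which is possible since $0\preceq Q_\mu\preceq\II_n$. Then apply push-through with $A=Q_\mu^{1/2}$ and $B=Q_\mu^{1/2}\KK$. With $\bar{\VV}:=Q_\mu^{1/2}\KK Q_\mu^{1/2}$ and $\bar{\VV}_\lambda=\bar{\VV}+\lambda\II_n$ this yields
\[
Q_\mu Q_{\mu,\lambda}=\lambda\, Q_\mu^{1/2}\bar{\VV}_\lambda^{-1}Q_\mu^{1/2},\qquad Q_\mu P_{\mu,\lambda}=Q_\mu^{1/2}\bar{\VV}\bar{\VV}_\lambda^{-1}Q_\mu^{1/2}.
\]
These coincide with the stated forms only when $Q_\mu$ is a projection, i.e. $\mu=0$.

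Since $0\preceq\bar{\VV}\bar{\VV}_\lambda^{-1}\preceq\II_n$ and $0\preceq\lambda\bar{\VV}_\lambda^{-1}\preceq\II_n$, congruence by $Q_\mu^{1/2}$ gives $0\preceq Q_\mu P_{\mu,\lambda}\preceq Q_\mu\preceq\II_n$, and likewise for $Q_\mu Q_{\mu,\lambda}$. So both operator-norm bounds are true.

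The corrected lemma therefore has $\bar{\VV}$ in place of $\wt\VV=Q_\mu\KK Q_\mu$. Note that $\bar{\VV}$ has the same eigenvalues as $\KK^{1/2}Q_\mu\KK^{1/2}\preceq\KK$. Also $\tr\big((Q_\mu P_{\mu,\lambda})^2\big)\le\tr(\bar{\VV}^2\bar{\VV}_\lambda^{-2})$ because $\|Q_\mu^{1/2}\|_{\op}\le1$. Hence the downstream comparison with the $\wh\mu_i$ survives with $\bar{\VV}$.
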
  
\medskip

\begin{proof}[Proof of \cref{thm_fix_ridge}]
Using \eqref{fit_ridge} to deduce
\begin{align*}
      \wh f_r (\XX) - f^*   (\XX )     
 & ~ =~      P_\mu \YY  +  Q_\mu  P_{ \mu,\lambda}~  \YY - f^*  (\XX)\\  \nonumber
& ~ =~     Q_\mu   \cE  +  Q_\mu P_{ \mu,\lambda}  \cE +    Q_\mu P_{ \mu,\lambda}  ~  f^*(\XX) -  Q_\mu f^*(\XX) \\
&~ = ~ P_ \mu  \cE  + Q_\mu P_{ \mu,\lambda}~  \cE -  Q_\mu Q_{\mu,\lambda}~ f^*(\XX),
\end{align*} 
we obtain 
\[
\cR(\wh  f_r  - f^* ) ~ \le ~ {\sigma^2\over n} ~  \tr\l( (P_ \mu  + Q_\mu P_{ \mu,\lambda}  ) ^\T  (P_ \mu +Q_\mu P_{ \mu,\lambda} )\r) 
+ 
{1\over n} \l\|  Q_\mu Q_{\mu,\lambda} f^*(\XX) \r \|^2 \\
=: \rI + \rII.  
\]
Regarding $\rI$,  we note that
\begin{equation}\label{can_be_repla}
    \begin{aligned}
  &  \tr\l( (P_ \mu  + Q_\mu P_{ \mu,\lambda}  ) ^\T  (P_ \mu +Q_\mu P_{ \mu,\lambda} )\r) \\
      &  ~     \le ~  2~ \tr\l(P_ \mu ^2 \r)  +2~   \tr\l( ( Q_\mu P_{ \mu,\lambda} )^2   \r)      \\ 
 & ~= ~ 
 2~  \tr\l(P_ \mu ^2 \r)   
 +  2~   \tr\l( \wt \VV ^2 ~  \wt \VV_\lambda^{-2}    \r)  &&\text{by \eqref{eq_QX_Plbd_ridge}}
 \\ 
 & ~ =~ 
 2~ \sum_{i=1 }^n \l( \frac{\wh \tau_i }{\wh \tau_i  + \mu }  \r)^2
 + 2~  \sum_{i=1 }^n \l( \frac{ \wt \nu_i  }{\wt \nu_i +\lambda} \r)^2&&\text{by \eqref{V_eigen_wt_ridge}},
    \end{aligned}
\end{equation}
which gives the upper bound of $\rI$. 
\\

Regarding $\rII$,  for any $\lambda,\mu>0$,
define 
\begin{align} \label{ridge_1}
f_{\lambda, \mu } 
: ~=~  \argmin_{ f \in \cH_K } \l\{ {1\over n} 
\l \|Q_\mu^{1/2} ( f(\XX) - f^*(\XX)) \r\|^2+\lambda \|f\|_K^2 \r\}, 
\end{align}
By the same reasoning of proving \eqref{closed_form_ridge}
with $f^*(\XX)$ in place of $\YY$, we have
\[
    f_{\lambda, \mu } ~=~ P_{\mu, \lambda} f^*(\XX),
\]
so that
\begin{align*}
\b\|  Q_\mu Q_{\mu,\lambda}  f^*(\XX)  \b \|^2 ~=~ & \l \|  Q_\mu    f^*(\XX) -  Q_\mu P_{\mu, \lambda}   f^*(\XX) \r\|^2  \\
~=~&  \l \|  Q_\mu    f^*(\XX) -  Q_\mu  f_{\lambda, \mu }(\XX ) \r\|^2 
\\
~ \le~ &  \l \| Q_\mu^{1/2}    f^*(\XX) -Q_\mu ^{1/2}  f_{\lambda, \mu }(\XX ) \r\|^2 &&\text{by $\|  Q_\mu \|_\op \le 1$}\\
~ \le~ &  \argmin_{ f \in \cH_K } \l\{ {1\over n} 
\l \|Q_\mu^{1/2} ( f(\XX) - f^*(\XX)) \r\|^2+\lambda \|f\|_K^2 \r\} 
&&\text{by \eqref{ridge_1}}  \\
~=~  & \argmin_{\alpha\in \RR^n,  f \in \cH_K } \l\{ {1\over n} 
\l \| \XX^\T \alpha + f(\XX) - f^*(\XX)) \r\|^2+\mu\|\alpha \|^2 +\lambda \|f\|_K^2 \r\},  
\end{align*}
where the last step holds by 
following the same argument in \cref{lem_ridge} with $f^*(\XX)$ in place of $\YY$. This gives the upper bound of $\rII$. 
Collecting the upper bounds of $\rI, \rII$ completes the proof.
\end{proof}

\subsection{Proof of \texorpdfstring{\cref{bd_pred_rate_random_ridge}}{\texttwoinferior}}\label{app_pf_thm_random_ridge}
\begin{proof} 
By noting that $Q_\mu \KK Q_\mu~\preceq~ \KK$ and 
using the same argument in the proof of \cref{lem_comp} with $Q_\mu$ in place of $Q_{\XX}$  gives
\[
 \sum_{i=1}^n  \left(\wt\nu_i \over \wt\nu_i + \lambda\right)^2~ \le  ~  \sum_{i=1}^n  \left(\wh\mu_i \over \wh\mu_i + \lambda \right)^2,\qquad\forall~\lambda>0  .
\]
Then in view of the argument of proving \cref{thm_random}, we have
    \[
    \EE  \tr\l( \wt \VV ^2 ~  \wt \VV_\lambda^{-2}    \r) ~\le ~  \EE  \tr\l(  \KK  \KK _\lambda^{-1}    \r) ,
    \]
    so that it remains to derive an upper bound for 
    $\EE [ \tr(P_\mu^2)]$.

    To this end, note that
    \[
 \wh \Sigma = {1\over n}  \XX^\T \XX   = {1\over n} \sum_{i=1}^n X_i X_i ^\T. 
    \]
We  find that
    \begin{align*}
        \EE[\tr(P_\mu^2) ]  ~\le ~        \|P_\mu\|_\op~\EE~  \tr(P_\mu )   ~\le ~  &    \EE~ \tr(P_\mu)  &&\text{by $\|P_\mu\|_\op\le 1$}  \\
~=  ~    &   \EE\l[\tr( \XX (\XX^\T\XX+n\mu \II_d)^{-1}\XX^\T)\r ]  
      \\
~=  ~     &   \EE\l[ \tr(  \wh \Sigma ~ ( \wh \Sigma +  \mu \II_d)^{-1} )\r ]   \\
~\le   ~     &  \tr\l(   \EE \wh \Sigma ~ (  \EE \wh \Sigma +  \mu \II_d)^{-1} \r) &&\text{by Jensen's inequality} \\ 
~=  ~     &  \tr( \Sigma ~ ( \Sigma +  \mu \II_d)^{-1} ) .
    \end{align*}
    The proof is complete. 
\end{proof}

\section{Complementary Simulations}\label{app_sec_sim_ridge}
In this section we conduct simulation studies to compare the prediction performance of the adaptive KRR (AKRR) in \cref{sec_method_krr},  its ridge-regularized variant (AKRR-ridge) in \cref{sec_ridge}, standard  KRR,  linear ridge regression (LRR),   and OLS.

We consider model \eqref{model} where  $f^*$  is set   as in \eqref{regre_function},
 $\epsilon \sim N(0,1)$, and $X$ 
 follows the  multivariate normal distribution $N(0,\Sigma)$ with $\Sigma$ defined in  \eqref{X_covariance}.
The parameter $\varrho$ in $\Sigma$   controls the overall correlation strength, and $s$ controls the rate at which the correlations decay across coordinates of $X$. 
We choose two pairs: $(\varrho, s) = (0.6, 1)$ and $(\varrho, s) = (0.9, 6)$.
The latter choice corresponds to a stronger dependence within the coordinates of $X$.   We 
vary $\alpha$   to examine its effect on the performance of different predictors. 
The sample size is set to $n=600$, and the dimension of $X$ is set to $d=50$.   The Gaussian kernel $K(x,x') =\exp(-\|x-x'\|^2/\gamma)$ with $\gamma>0$ is adopted in the experiment. 
All the tuning parameters in penalties, together with the bandwidth $\gamma$,  are selected jointly via cross-validation (CV).
When the median-adjusted $\gamma$ stated in \cref{sec_gaussian} is used, we only 
tune penalty parameters via CV. 
The obtained 
results are reported in Tables \ref{tab_app_1} and \ref{tab_app_2}. 
\begin{table}
\centering
\caption{Average performance for 
$(\varrho, s) = (0.9, 6)$
}
\label{tab_app_1} 
\small
\setlength{\tabcolsep}{7pt}
\begin{tabular}{lcccccc}
\toprule
$\alpha$  & 0 & 0.4 & 0.8 & 1.2 & 1.6 & 2 \\
\midrule
OLS          & 0.090 & 1.214 & 4.593 & 10.337 & 18.244 & 22.749 \\
LRR          & 0.018 & 1.047 & 4.172 & 9.441  & 16.688 & 20.422 \\
\midrule
KRR with CV         & 0.020 & 0.304 & 0.792 & 1.528  & 2.531  & 2.903 \\
AKRR  with CV       & 0.091 & 0.423 & 0.969 & 1.748  & 2.779  & 3.207 \\
AKRR-ridge with CV & 0.022 & 0.296 & 0.774 & 1.534  & 2.477  & 2.849 \\
\midrule
KRR   with Med $\gamma$       & 0.015 & 0.264 & 0.759 & 1.548 & 2.512 & 3.717 \\
AKRR  with Med $\gamma$       & 0.092 & 0.372 & 0.949 & 1.809 & 2.766 & 4.086 \\
AKRR-ridg with Med $\gamma$  & 0.021 & 0.275 & 0.739 & 1.547 & 2.470 & 3.696 \\
\bottomrule
\end{tabular}
\end{table}

A few remarks are made as follows. First, 
observing  Tables  \ref{tab_app_1} and \ref{tab_app_2},
the proposed 
AKRR-ridge method 
outperforms other methods across almost all settings, which coincides with our theoretical expectation in \cref{sec_ridge}.
Either KRR or linear procedures lead to less satisfactory performance in prediction.  
More specifically, in the case $(\varrho, r) = (0.9, 6)$, both OLS and LRR exhibit degraded performance, owing to the much stronger dependence among the coordinates of $X$. In this case, 
the performance of  KRR gets close to that of AKRR-ridge. 
One explanation is that when the correlation between the coordinates in $X$ becomes strong, the kernel matrix $\KK$  depends on $\XX$  through only a few dominant directions, thereby mitigating the curse of dimensionality of KRR. 
As a result, KRR has dominant advantages over linear procedures, which significantly narrows the gap in prediction performance between KRR and AKRR-ridge.
In the case 
$(\varrho, r) = (0.6, 1)$, however, 
KRR performs clearly worse than AKRR-ridge, due to the curse of dimensionality.  The performance of  OLS,  LRR, and AKRR gets close to that of AKRR-ridge, as linear procedures are already highly competitive in this setting.
Second, we compare the methods with and without the ridge penalty. 
For the case $(\varrho, r) = (0.9, 6)$, we observe evident benefits from incorporating a ridge penalty to penalize the linear strength: LRR  outperforms OLS, and AKRR-ridge outperforms AKRR. This is due to the stronger dependence among the coordinates of $X$, under which the added regularization helps reduce variance more substantially. This sharply differs from that for the case $(\varrho, r) = (0.6, 1)$ in Table \ref{tab_app_2}, 
where the much weaker dependence leads to a subtle impact of the additional ridge penalty on the prediction performance. 
\begin{table}
\centering
\caption{Average performance for 
$(\varrho, s) = (0.6, 1)$
}
\label{tab_app_2} 
\small
\setlength{\tabcolsep}{7pt}
\begin{tabular}{lcccccc}
\toprule
$\alpha$  & 0 & 0.4 & 0.8 & 1.2 & 1.6 & 2 \\
\midrule
OLS          & 0.091 & 0.219 & 0.599 & 1.206 & 2.107 & 3.261 \\
LRR       & 0.093 & 0.217 & 0.605 & 1.225 & 2.091 & 3.192 \\
\midrule
KRR  with CV        & 0.119 & 0.237 & 0.621 & 1.218 & 2.079 & 3.717 \\
AKRR   with CV     & 0.091 & 0.220 & 0.599 & 1.207 & 2.107 & 3.259 \\
AKRR-ridge  with CV   & 0.091 & 0.216 & 0.600 & 1.193 & 2.070 & 3.166 \\
\midrule
KRR-Med  with Med $\gamma$        & 0.232 & 0.352 & 0.759 & 1.410 & 2.334 & 3.584 \\
AKRR-Med   with Med $\gamma$      & 0.097 & 0.229 & 0.607 & 1.222 & 2.104 & 3.305 \\
AKRR-ridge-Med with Med  $\gamma$  & 0.096 & 0.223 & 0.604 & 1.222 & 2.088 & 3.186 \\
\bottomrule
\end{tabular}
\end{table}
Third, 
comparing the methods with CV-selected $\gamma$ and those with median-adjusted $\gamma$, we observe that, in the case $(\varrho, r) = (0.9, 6)$, 
the median-adjusted $\gamma$ yields satisfactory performance for KRR, AKRR, and AKRR-ridge, except when $\alpha =2$. In addition, the benefits of using the median-adjusted $\gamma$ over the CV-selected $\gamma$ become more evident as $\alpha$  decreases. Conversely, in the case $(\varrho, r) = (0.6, 1)$, 
KRR with median-adjusted $\gamma$ has clearly worse performance than KRR with  CV-selected $\gamma$, except when $\alpha =2$. Differently, our results suggest that
both AKRR and AKRR-ridge are robust to the choice of  $\gamma$ in this setting.

\end{document}